\newcommand{\qfrak}{\mathfrak{q}}
\newcommand{\bump}{\eta}
\newcommand{\ocorbit}{\overline\corbit}
\newcommand{\supernice}{\star}
\newcommand{\tracet}{\mathfrak{t}}
\newcommand{\oXcal}{{\Xcal}^\bullet}
\newcommand{\oZcal}{\Zcal^\bullet}
\newcommand{\vGamma}{\overline{\Gamma}}
\newcommand{\tfrak}{\mathfrak{t}}
\newcommand{\efrak}{\mathfrak{e}}
\newcommand{\corbit}{\mathfrak{y}}
\newcommand{\vcorbit}{\overline{\corbit}}
\newcommand{\fmapp}{f}
\newcommand{\vfmapp}{F}
\newcommand{\vgmapp}{G}
\newcommand{\vhmapp}{H}
\newcommand{\OModuli}{\mathcal{M}}
\newcommand{\COrbits}{\mathcal{Q}}
\newcommand{\EOrbits}{\mathcal{P}}
\newcommand{\bulletEOrbits}{\mathcal{P}^\bullet}
\newcommand{\oCOrbits}{\overline\COrbits}
\newcommand{\oEOrbits}{\overline\EOrbits}
\newcommand{\Periodics}{\mathscr{P}}
\newcommand{\sgn}{\mathrm{sgn}}
\newcommand{\vg}{{\overline{Y}}}
\newcommand{\vY}{{\overline{Y}}}
\newcommand{\vF}{{\overline{F}}}
\newcommand{\vPhi}{{\overline{\Phi}}}
\newcommand{\dgam}{{\dot{\gamma}}}
\newcommand{\xfrak}{\mathfrak{x}}
\newcommand{\fmap}{{{f}}}
\newcommand{\Fmap}{{\mathfrak{f}}}
\newcommand{\gmap}{{{g}}}
\newcommand{\gfrak}{{\mathfrak{g}}}
\newcommand{\hmap}{{{h}}}
\newcommand{\Pmap}{{\mathfrak{p}}}
\newcommand{\orbit}{{\mathfrak{z}}}
\newcommand{\vorbit}{{\overline{\mathfrak{z}}}}
\newcommand{\colvec}[2][.8]{%
  \scalebox{#1}{%
    \renewcommand{\arraystretch}{.8}%
    $\begin{pmatrix}#2\end{pmatrix}$%
  }
}
\newcommand{\trace}{\mathrm{tr}}
\newcommand{\ra}{\rightarrow}
\newcommand{\Ht}{\mathrm{H}}
\newcommand{\Hbb}{\mathbb{H}}
\newcommand{\Bcal}{\mathcal{B}}
\newcommand{\Ccal}{\mathcal{C}}
\newcommand{\Ecal}{\mathcal{E}}
\newcommand{\Fcal}{\mathcal{F}}
\newcommand{\Gcal}{\mathcal{G}}
\newcommand{\Hcal}{\mathcal{H}}
\newcommand{\Kcal}{\mathcal{K}}
\newcommand{\Ncal}{\mathcal{N}}
\newcommand{\Ocal}{\mathcal{O}}
\newcommand{\Ucal}{\mathcal{U}}
\newcommand{\Xcal}{\mathcal{X}}
\newcommand{\Ycal}{\mathcal{Y}}
\newcommand{\Zcal}{\mathcal{Z}}
\newcommand{\Ker}{\mathrm{Ker}}
\newcommand{\Coker}{\mathrm{Coker}}
\newcommand{\Image}{\mathrm{Im}}
\newtheorem{thm}{Theorem}[section]
\newtheorem{prop}[thm]{Proposition}
\newtheorem{lem}[thm]{Lemma}
\theoremstyle{definition}
\newtheorem{examp}[thm]{Example}
\newtheorem{defn}[thm]{Definition}
\newtheorem{rmk}[thm]{Remark}
\begin{document}
\title[Counting periodic orbits of vector fields over smooth closed manifolds]
{Counting periodic orbits of vector fields over smooth closed manifolds }%
\author{Eaman Eftekhary}%
\address{School of Mathematics, Institute for Research in 
Fundamental Sciences (IPM), P. O. Box 19395-5746, Tehran, Iran}%
\email{eaman@ipm.ir}
\begin{abstract}
We address the problem of counting periodic orbits of vector fields on smooth closed manifolds. The space of non-constant periodic orbits is enlarged to a complete space by adding the {\emph{ghost orbits}}, which are decorations of the zeros of vector fields. Associated with any compact and open subset $\Gamma$ of the moduli space of periodic and ghost orbits, we define an integer weight. When the vector field moves along a path, and $\Gamma$ deforms in a compact and open family, we show that the weight function stays constant. We also give a number of examples and computations, which illustrate the applications of our main theorem.  
\end{abstract}

\maketitle
\tableofcontents
\section{Introduction}
Let $\Ycal$ denote the space of all $C^1$ vector fields on a smooth closed manifold $M$ of dimension $n$, and $\Ycal^\circ\subset\Ycal$ denote the space of non-singular (i.e. nowhere vanishing) vector fields. The Poincar\'e-Hopf index theorem implies that   $\Ycal^\circ$ is non-empty if and only if $\chi(M)=0$. Periodic orbits are among the objects which make the dynamics of the flow associated with vector fields in $\Ycal^\circ$ interesting and challenging. When $M$ is a $3$-manifold, it follows from Kuperberg's results on the Seifert conjecture that each connected component of $\Ycal^\circ$ includes a volume-preserving vector field without periodic orbits \cite{Kuperberg,Kuperberg-2} (see also \cite{Ghys}). The search for vector fields with least number of periodic orbits may be restricted  to specific subsets of $\Ycal^\circ$. Let $\Ycal^{\mathrm{c}}\subset \Ycal^{\mathrm{h}}\subset\Ycal^\circ$ denote the subsets consisting of the Reeb vector fields associated with contact and stable Hamiltonian structures on $M$, respectively. The Weinstein conjecture for contact $3$-manifolds, proved by Taubes and by Cristofaro-Gardiner and Hutchings for contact structures implies that  every $Y\in \Ycal^{\mathrm{c}}$ admits at least two periodic orbit \cite{Taubes-2,Gardiner-Hutchings}. Moreover, Hutchings and Taubes show that every $Y\in\Ycal^{\mathrm{h}}$ admits at least one periodic orbit provided that $M$ is not a torus bundle over the circle \cite{Hutchings-Taubes}. \\  

A smooth closed manifold $N$ may be viewed as the zero section in its tangent bundle $TN$. The quotient  $M$  of $TN\setminus N$ by the scaling action of $\R^+$ may be viewed as the unit tangent bundle of $N$ for every Riemannian metric $g$ on $N$. The metric $g$ determines a vector field $Y_g\in\Ycal^\circ$ which generates the geodesic flow associated with $g$ on $M$. Periodic orbits of $Y_g$ correspond to closed $g$-geodesics. Such vector fields form a subset $\Ycal^{\mathrm{r}}\subset\Ycal^\circ$. In each free homotopy class of closed loops on  $N$, there is at least one closed $g$-geodesic (and exactly one closed $g$-geodesic if $g$ is negatively curved). Margulis shows that the asymptotics of the function counting closed geodesics below a given length is related to the topological entropy of the metric over manifolds with constant negative curvature \cite{Margulis}. This correspondence was  generalized to manifolds with variable negative curvature by Philips and Sarnak \cite{PS} (see also \cite{Anan} and  \cite{PS-1}).\\

Let $M=M_\phi$ denote the suspension any isotopy class in $\mathrm{Diff}(N)$, which consists of self-diffeomorphisms of a  smooth closed manifold $N$.  Associated with every self-diffeomorphism $f$ representing $\phi$  is (the conjugacy class of) a vector field $Y_f$ on $M$. Such vector fields form a subset $\Ycal^\phi\subset\Ycal^\circ$, while periodic orbits of $Y_f$ are in correspondence with periodic orbits of $f$. The Lefshetz-Hopf fixed point theorem for iterations of $f$ gives a formula for the {\emph{weighted}} count of periodic orbits of $f$, which remains constant as $f$ moves in $\phi$. Nevertheless, the number of periodic orbits can be higher  if particular subsets of $\mathrm{Diff}(N)$ are considered. For instance, if $\mathrm{Ham}(N,\omega)$ denotes the set of Hamiltonian self-diffeomorphisms of a symplectic manifold $(N,\omega)$, the Arnold conjecture predicts that the number of fixed points of every $f\in\mathrm{Ham}(N,\omega)$  is bounded below by the number of critical points of smooth functions on $N$, and thus  by the Lusternik–Schnirelmann category of $N$ \cite{Takens}. The sum of the betti numbers of $N$ is already proved to be a lower bound in this context \cite{Liu-Tian, Fukaya-Ono, Ruan}, by refining the approach of Floer in the monotone case \cite{Floer}.\\ 

Such results investigate the vector fields in a given isotopy class, with the least number of periodic orbits. As in the Lefshetz-Hopf formula for $Y\in\Ycal^\phi$, one may alternatively investigate the weighted counts of periodic orbits for the vector fields $Y\in\Ycal^\circ$ which stay constant as $Y$ moves in a family. The results on the Seifert conjecture illustrates some of the potential difficulties, as non-singular vector fields with interesting periodic orbits are isotopic to vector fields without any periodic orbits.  Infinite families of periodic orbits for $Y\in\Ycal^\circ$ correspond to discrete sets of periodic orbits when $Y$  is perturbed to a close-by  generic vector field. Nevertheless, a major problem is that different perturbations of $Y$ give structurally different discrete sets. When $M$ is the unit tangent bundle of $N$ and $Y\in\Ycal^{\mathrm{r}}$, the problem of counting closed geodesics is discussed in \cite{Ef-p}. Here,  we extend the results of \cite{Ef-p} to vector fields in $\Ycal$ on arbitrary smooth closed manifolds.\\

 Given $Y\in\Ycal$, every (non-constant) periodic $Y$-orbit is the degree-$d$ cover of an embedded periodic orbit for some $d\in\Z^+$, which is called the {\emph{degree}} of that periodic orbit.  
Let $\EOrbits_Y=\cup_d \EOrbits_Y^d$ denote the space of (non-constant) periodic $Y$-orbits, where $\EOrbits_Y^d$ denotes the subset of degree $d$ orbits. The space $\EOrbits$ of periodic orbits is then a fiber space over $\Ycal$, where the fiber over $Y\in\Ycal$ is given by $\EOrbits_Y$. Note that  $\EOrbits$   admits a rotation action of  $S^1$,  which preserves the fibers. The quotient $\EOrbits/S^1$ is called the {\emph{moduli space}} of periodic orbits. We denote the projection map from $\EOrbits$ to $\Ycal$ by $p:\EOrbits\ra \Ycal$. If $\vY:[0,1]\ra \Ycal$ is a $C^1$ path of vector fields connecting $Y_0,Y_1\in\Ycal$, the moduli space $\EOrbits_\vY$ is also defined as a fiber space over $[0,1]$ where the fiber over $t\in[0,1]$ is $\EOrbits_{\vY(t)}$.
 
\begin{thm}\label{thm:intro-nonsingular-case}
Associated with every $Y\in\Ycal$ and every compact and open subset $\Gamma$ of $\EOrbits_Y/S^1$ is a weight $n(\Gamma)\in\Z$. This weight function satisfies the following two properties:
\begin{itemize}
\item If $\Gamma$ consists of a periodic orbit of degree $1$ with linearized holonomy $\Fmap:\R^{n-1}\ra \R^{n-1}$, where $\Fmap-Id$ is non-singular, then $n(\Gamma)=\sgn(\det(\Fmap-Id))$.
\item If $\vY:[0,1]\ra \Ycal$ is a path of vector fields connecting $Y_0,Y_1\in\Ycal$ and $\vGamma\subset\EOrbits_\vY/S^1$ is a compact and open subset which intersects $\EOrbits_{\vY(t)}$ in $\Gamma_t$, we have $n(\Gamma_0)=n(\Gamma_1)$.
\end{itemize}
\end{thm}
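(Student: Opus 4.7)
The plan is to first define the weight on a residual subset of generic vector fields by a signed count of non-degenerate periodic orbits, then to extend to all of $\Ycal$ by a perturbation argument, and finally to establish the deformation invariance via a parameterized moduli space that is generically a compact oriented $1$-manifold.

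I would set up a Fredholm framework in which a non-constant periodic orbit of $Y$ is encoded as a pair $(T,\gamma)$ with $T>0$ and $\gamma:\R/\Z\to M$ satisfying $\dot{\gamma}=T\cdot Y(\gamma)$. In appropriate Sobolev completions such pairs are the zeros of a smooth Fredholm section of an infinite-rank Banach bundle over $\R^+\times\Map(S^1,M)$, and the section descends to index zero after quotienting by the $S^1$-reparametrization action. Its linearization at a zero is controlled by $\Fmap-\mathrm{Id}$, where $\Fmap$ is the linearized holonomy around the relevant iterate for a multiply-covered orbit. A standard Sard--Smale argument yields a residual subset $\Ycal^{\mathrm{gen}}\subset\Ycal$ for which every periodic orbit, including all iterates, is non-degenerate; for such $Y$, the space $\EOrbits_Y/S^1$ is discrete, so any compact open $\Gamma$ contains only finitely many orbits, and I set
\[
n(\Gamma)\;=\;\sum_{[\corbit]\in\Gamma}\sgn\bigl(\det(\Fmap_\corbit-\mathrm{Id})\bigr),
\]
which matches the formula required by the first bullet.

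For general $Y\in\Ycal$ and compact open $\Gamma\subset\EOrbits_Y/S^1$, the compactness of $\Gamma$ prevents the periods of orbits in it from escaping to $0$ or $\infty$, so the total projection $\EOrbits/S^1\to\Ycal$ is proper on a suitable neighborhood of $\Gamma$; combined with openness, this propagates $\Gamma$ to a compact open family over some $\Ycal$-neighborhood $U\ni Y$. Picking $Y'\in U\cap\Ycal^{\mathrm{gen}}$ yields a compact open $\Gamma'\subset\EOrbits_{Y'}/S^1$, and I define $n(\Gamma):=n(\Gamma')$, with independence of the choice subsumed by the second bullet applied to paths inside $U$. For the invariance itself, given a path $\vY$ between generic endpoints and a compact open $\vGamma\subset\EOrbits_{\vY}/S^1$, a parametric Sard--Smale argument lets us perturb $\vY$ rel endpoints so that the parameterized Fredholm section is transverse to zero over $\vGamma$; the resulting slice of parameterized moduli is then a compact oriented $1$-manifold with boundary $\Gamma_1\sqcup(-\Gamma_0)$, the orientation determined by the spectral flow of the linearized operator along $\vY$. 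A local computation shows that at a non-degenerate boundary orbit this induced orientation reproduces $\sgn(\det(\Fmap-\mathrm{Id}))$, so $n(\Gamma_0)=n(\Gamma_1)$ follows from the vanishing of the signed count of boundary points of a compact $1$-manifold.

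The principal obstacle is the sign-orientation bookkeeping in the parameterized moduli space, especially across bifurcations that involve multiply-covered orbits. Generic one-parameter families exhibit saddle-node pairs of opposite sign (which cancel), period-doubling bifurcations where a degree-$d$ orbit loses hyperbolicity and spawns a branch of degree-$2d$ orbits (in which a sign flip along the simple branch must match the appearance of the doubled cover), and Neimark--Sacker resonances at higher iterates. A case-by-case check is cumbersome; the cleanest route is to identify $n(\Gamma)$ with a Fredholm Euler degree of the section restricted to a relatively compact open chart containing the preimage of $\Gamma$, after which deformation invariance of that degree disposes of every bifurcation simultaneously.
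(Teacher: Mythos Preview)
Your overall architecture---define the weight on a residual set, extend by perturbation, prove invariance along generic paths---matches the paper. The gap is in the treatment of multiply covered orbits, and it is not merely a bookkeeping nuisance: with your proposed weight the invariance claim is actually false.

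You set $n(\Gamma)=\sum_{[\corbit]\in\Gamma}\sgn\bigl(\det(\Fmap_\corbit-\mathrm{Id})\bigr)$ for every orbit in $\Gamma$, regardless of degree. The first bullet of the theorem only pins down the weight for degree-$1$ orbits; your extension to higher covers is an assumption, and it is the wrong one. Run your formula through the period-doubling model of Example~\ref{ex:2}: before the bifurcation the degree-$2$ piece $\Gamma^2_t$ contains only the double cover $2\star\orbit_t$, with $\sgn(\det(\Fmap_{\orbit_t}^2-\mathrm{Id}))=\epsilon_2(\orbit_t)=-1$; after the bifurcation it contains $2\star\orbit_t$ with $\epsilon_2(\orbit_t)=+1$ together with the newborn embedded orbit $\orbit'_t$ with $\epsilon_1(\orbit'_t)=-1$, so your sum is $0$. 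The count jumps from $-1$ to $0$. The paper repairs this by assigning to a super-rigid degree-$d$ cover the weight $\epsilon_1$ if $d=1$, $(\epsilon_2-\epsilon_1)/2$ if $d=2$, and $0$ if $d>2$ (Definition~\ref{defn:sign}); with these values the same example gives $-1$ on both sides.

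The reason your $1$-manifold cobordism argument cannot be salvaged by further genericity is that the parametrized section over the full orbit space is \emph{never} transverse at a period-doubling point: the double covers $2\star\tau_1(r)$ persist for all $r$ near $t$ (since the underlying simple orbit is non-degenerate), while an additional branch $\tau_2(r)$ of embedded period-$2$ orbits emanates from $2\star\vorbit$ on one side only. The zero set is a ``T'', not a $1$-manifold, and no amount of perturbing $\vY$ rel endpoints removes this, because it is forced by the $\Z/2$-symmetry of the double cover. The paper handles this by building an explicit Kuranishi model $p_\vorbit(r,x)=xq_\vorbit(r,x)$ for the neighborhood of $2\star\vorbit$ in $\OModuli_\vY$ (the discussion preceding Theorem~\ref{thm:super-nice-paths}), proving that for generic paths $q_\vorbit$ has a Morse zero, and then verifying by hand that the weights of Definition~\ref{defn:sign} are preserved across the resulting bifurcation picture (Figure~\ref{fig:local-contribution}). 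Your suggestion to invoke a Fredholm Euler degree does not sidestep this: the relevant section is $\Z/d$-equivariant over the $d$-fold covers, and the naive degree is not defined at the branch point; what one recovers from a correct equivariant or Fuller-type degree is precisely a weight with nontrivial multiplicity corrections, not $\sgn(\det(\Fmap^d-\mathrm{Id}))$.
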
 

When $\Gamma$ has the structure of a closed $k$-dimensional manifold, it is sometimes possible to compute the weight function $n(\Gamma)$ without difficulty.

\begin{prop}\label{prop:computation-intro}
Suppose that $M$ and $\Ycal$ are before, $Y\in\Ycal$ and $\Gamma$ is a compact and open subset of $\EOrbits^1_Y/S^1$, which has the structure of a closed $k$-dimensional manifold. Let $d\star\Gamma$ denote the union of periodic orbits which are degree-$d$ covers of the orbits in $\Gamma$. Suppose that for every periodic orbit in $\Gamma$, the linearization $\Fmap$ of the holonomy map has $1$ as an eigenvalue with multiplicity $k$, while it has no other eigenvalues which are roots of unity.  If $m_1(\Fmap)$ and $m_2(\Fmap)$ denote the number of real eigenvalues of $\Fmap$ in $(-\infty,1)$ and $(-1,1)$, respectively, then $m_(\Fmap)$ and $m_2(\Fmap)$ remain constant on $\Gamma$ and we have  
\begin{align*}
n(\Gamma)=(-1)^{m_1(\Fmap)}\cdot \chi(\Gamma)\quad \text{and}\quad n(2\star \Gamma)=\frac{(-1)^{m_2(\Fmap)}-(-1)^{m_1(\Fmap)}}{2}\cdot \chi(\Gamma),
\end{align*}
while $n(d\star\Gamma)=0$ for all integers $d>2$. Here $\chi(\Gamma)$ denotes the Euler characteristic of $\Gamma$. 
\end{prop}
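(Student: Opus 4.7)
The plan is to reduce to a non-degenerate situation via a Morse-theoretic perturbation of $Y$ along $\Gamma$, and then combine the first bullet of Theorem~\ref{thm:intro-nonsingular-case} with the deformation invariance of the second bullet. First I would pick a Morse function $h\colon\Gamma\to\mathbb{R}$ and build a compactly supported deformation $Y_\epsilon$ of $Y$, equal to $Y$ outside a tubular neighborhood of $\Gamma$, whose effect on the first-return map is to compose with the time-$\epsilon$ flow of (an extension of) $\nabla h$ in the $k$ tangent directions to $\Gamma$. The key observation is that, for $\epsilon>0$ small, the only periodic orbits of $Y_\epsilon$ of degree $d$ in a neighborhood of $d\star\Gamma$ are the $d$-fold covers of isolated non-degenerate degree-$1$ orbits $\gamma_p$, one per critical point $p$ of $h$: in the $\Gamma$-tangent directions the equation $\phi_\epsilon^d(x)=x$ reads to leading order as $d\epsilon\,\nabla h(x)=0$, and in the transverse directions $\Fmap^d-I$ remains invertible because no eigenvalue of $\Fmap$ other than $1$ is a root of unity. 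The linearized holonomy $\Fmap_p$ of $\gamma_p$ agrees with $\Fmap$ in the transverse directions, hence preserves $m_1(\Fmap)$ and $m_2(\Fmap)$ there, and acquires $k$ new eigenvalues of the form $1+\epsilon\lambda_j+O(\epsilon^2)$, where $\lambda_1,\ldots,\lambda_k$ are the eigenvalues of $\mathrm{Hess}(h)(p)$; in particular $i_p$ of them lie in $(-1,1)$ for $\epsilon$ small, where $i_p$ is the Morse index. That $m_1(\Fmap)$ and $m_2(\Fmap)$ (or at any rate their parities) are constant on each component of $\Gamma$ is then a straightforward continuity argument, since the root-of-unity hypothesis prevents real eigenvalues from crossing $\pm 1$.

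With this setup, the first formula is immediate. Applying the first bullet of Theorem~\ref{thm:intro-nonsingular-case} to each $\gamma_p$, the sign of $\det(\Fmap_p-I)$ factors as $(-1)^{m_1(\Fmap)}\cdot(-1)^{i_p}$, and summing over critical points with the Poincar\'e--Hopf identity $\chi(\Gamma)=\sum_p(-1)^{i_p}$ and then invoking the deformation invariance yields $n(\Gamma)=(-1)^{m_1(\Fmap)}\chi(\Gamma)$.

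For $d\ge 2$ the strategy is the same but requires a formula for the weight of an isolated non-degenerate degree-$d$ cover of a non-degenerate degree-$1$ orbit. The key technical input I would need is the M\"obius-type expression
\[
n(d\star\gamma_p)=\frac{1}{d}\sum_{e\mid d}\mu(d/e)\,\sgn\bigl(\det(\Fmap_p^e-I)\bigr),
\]
which I would establish by induction on $d$: using deformations that create or destroy degree-$e$ orbits via period-$e$ bifurcations (for each proper divisor $e\mid d$) and applying the deformation invariance of Theorem~\ref{thm:intro-nonsingular-case}, one can determine $n(d\star\gamma_p)$ from the lower-degree weights, and M\"obius inversion identifies the result with the displayed formula. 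Granting this, a direct case analysis of $\sgn(\mu^e-1)$ for real $\mu$ shows that $\sgn(\det(\Fmap_p^e-I))=(-1)^{m_1(\Fmap)+i_p}$ when $e$ is odd and $(-1)^{m_2(\Fmap)+i_p}$ when $e$ is even. Plugging this in and using the standard identity $\sum_{e\mid d,\,e\text{ odd}}\mu(d/e)=\sum_{e\mid d,\,e\text{ even}}\mu(d/e)=0$ for $d\ge 3$ gives $n(d\star\gamma_p)=0$ there; for $d=2$ only the terms $e=1,2$ contribute, producing $\bigl((-1)^{m_2(\Fmap)}-(-1)^{m_1(\Fmap)}\bigr)(-1)^{i_p}/2$ per critical point. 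Summing over $p$ and invoking deformation invariance once more completes the computation. The principal obstacle is establishing the M\"obius formula for isolated multi-covers; everything downstream reduces to elementary sign and divisor bookkeeping together with Poincar\'e--Hopf.
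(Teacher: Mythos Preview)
Your Morse-theoretic perturbation is exactly the route the paper takes: its only proof of this proposition is the sentence ``Following the argument of \cite[Theorem~5.1]{White-1}, one can prove the following proposition,'' and White's Theorem~5.1 is precisely the device of perturbing along a closed manifold of solutions by a Morse function and reading off the degree as $\sum_p(-1)^{i_p}=\chi(\Gamma)$. So for $d=1$ your argument and the paper's coincide.

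The one place you diverge is in handling the multi-covers. What you call the ``principal obstacle''---the M\"obius-type formula $n(d\star\gamma_p)=\tfrac{1}{d}\sum_{e\mid d}\mu(d/e)\,\sgn\det(\Fmap_p^e-I)$---is in the paper not a lemma to be derived but the \emph{definition} of the weight on super-rigid orbits (Definition~2.9, once one uses the observation just before it that $\epsilon_d(\orbit)$ depends only on the parity of $d$). The paper's logical order is: define the multi-cover weights by this formula, then prove invariance along generic paths (Theorem~5.1) by analysing exactly the period-doubling bifurcations you allude to, then extend to arbitrary $Y$ by perturbation (Definition~5.2, Theorem~5.2). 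Your order is the reverse: take invariance and the degree-$1$ formula as given, and force the multi-cover weights from the bifurcation analysis. The two are equivalent, but the paper's framing dissolves your obstacle entirely---you may simply quote Definition~2.9 and the discussion preceding it, and the $d\ge 2$ cases then reduce to the same Poincar\'e--Hopf sum you already wrote down for $d=1$.
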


\begin{examp}\label{ex:intro-1}
The space of periodic orbits of degree $1$ for the Hopf vector field on $S^3$ may be identified with $S^3$, as the orbit of each point is periodic of period $2\pi$, and the holonomy map associated with each periodic orbit is the identity. The quotient of the space of periodic orbits by the rotation action of $S^1$ is $\Gamma=S^2$, which is compact and open in the moduli space $\EOrbits/S^1$. Proposition~\ref{prop:computation-intro} implies that $n(\Gamma)=2$. If $\vY$ is a path of non-singular vector fields on $S^3$ which connects the Hopf vector field to an isotopic non-singular vector field without periodic orbits, it follows that for every period $s>2\pi$, there is a time $t\in[0,1]$ such that $\vY(t)$ has a periodic orbit of period $s$. This is true, since otherwise we may take $\vGamma$ to be the subset of $\EOrbits_{vY}$ with period less than $s$, and 
\[2=n(\Gamma)=n(\Gamma_0)=n(\Gamma_1)=n(\emptyset)=0.\]  
This contradiction implies the claim.
\qed
\end{examp}

The main weakness of the above theorem is that $\EOrbits/S^1$ is not complete. A Cauchy sequence in $\EOrbits/S^1$ may converge to a constant orbit, which is not in $\EOrbits/S^1$. This is of course irrelevant in dealing with non-singular vector fields. Nevertheless, adding constant orbits to the moduli space of periodic orbits, without further consideration, makes significant damage to the invariance claim of Theorem~\ref{thm:intro-nonsingular-case}. The more detailed explanation of the obstacles and stating the stronger theorem requires the introduction of some notation, and the replacement of {\emph{ghost orbits}} for constant orbits.\\ 

A {\emph{ghost}} orbit is a $4$-tuple $\corbit=(s,Y,x,P)$, where $Y\in\Ycal$, $x$ is a zero of $Y$ and $P\subset T_xM$ is a plane which is invariant under $d_xY:T_xM\ra T_xM$. We further require that the restriction $d_xY|_P$ has a pair of eigenvalues $\lambda,\bar\lambda\in\C\setminus \R$ with $\tracet(\corbit)=\lambda+\bar\lambda< 0$ and the {\emph{period}} $s=s(\corbit)\in\R^+$ is such that $s\cdot\mathrm{Im}(\lambda)\in 2\pi\Z^+$. If $s\cdot\mathrm{Im}(\lambda)=2\pi d$, the positive integer $d$ is called the {\emph{degree}} of $\corbit$. A $4$-tuple  $\corbit=(s,Y,x,P)$ is called  a {\emph{boundary}} orbit if $\tracet(\corbit)=0$. The spaces of ghost orbits and boundary orbits are denoted by 
\[\COrbits=\coprod_d\COrbits^d\quad\quad\text{and}\quad\quad\Bcal=\coprod_d\Bcal^d,\] 
respectively, where $\COrbits^d\subset\COrbits$ and $\Bcal^d\subset\Bcal$ are the connected subspaces  consisting of degree-$d$ objects. Both $\COrbits$ and $\Bcal$ inherit their metrics as subspaces of $\R^+\times\Ycal\times G$, where $G=G_M$ is the Grassmannian bundle of $2$-planes in $TM$. The union of $\COrbits$ with $\Bcal$ is sometimes denoted by $\oCOrbits$, which is a $C^1$ Banach manifold with boundary $\Bcal$ (also a $C^1$ Banach manifold). Periodic and ghost orbits have the boundary orbits as their common boundary, as the following theorem implies.

\begin{thm}\label{thm:intro-gluing}
There is a complete metric space $\OModuli=\cup_{d\in\Z^+}\OModuli^d$,  which includes $\Bcal$ as a subspace, while $\OModuli\setminus\Bcal$ is the disjoint union of $\EOrbits/S^1$ and $\COrbits$. There is a projection map $\pi:\OModuli\ra \Ycal$ which restricts to give the Fredholm projection maps $q:\COrbits\ra \Ycal$ and $p^d:\EOrbits^d/S^1\ra \Ycal$ of index zero. Each subspace $\OModuli^d$ is a $C^1$ Banach manifold, which includes $\Bcal^d$ as a codimension $1$ submanifold, with $\OModuli^d\setminus \Bcal^d=\COrbits^d\amalg(\EOrbits^d/S^1)$. 
\end{thm}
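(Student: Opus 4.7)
The plan is to construct $\OModuli$ by equipping $(\EOrbits^d/S^1)\amalg \COrbits^d\amalg \Bcal^d$ with a metric and a $C^1$ Banach manifold structure, the only nontrivial gluing occurring along $\Bcal^d$. I begin by recalling the Banach manifold structures on the two open strata separately. The space $\EOrbits^d/S^1$ arises as the quotient by $S^1$-reparametrization of the zero locus of the Fredholm section
\[
\Phi(s,\gamma,Y) = \dot\gamma - sY\circ\gamma
\]
of a suitable $C^1$ Banach bundle over $\R^+\times C^1(\R/\Z,M)\times\Ycal$, yielding the Fredholm projection $p^d:\EOrbits^d/S^1\to\Ycal$ of index zero. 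The space $\COrbits^d$ is cut out of $\R^+\times\Ycal\times G$ by $Y(x)=0$, the invariance of $P$ under $d_xY$, the spectral condition $s\cdot\mathrm{Im}(\lambda)=2\pi d$, and the open inequality $\tracet<0$; verifying transversality of this system realizes $\COrbits^d$ as a $C^1$ Banach manifold whose projection $q^d$ is Fredholm of index zero, with $\Bcal^d=\{\tracet=0\}$ a codimension-$1$ $C^1$ submanifold in the closure.

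The heart of the argument is the construction of a $C^1$ chart around an arbitrary boundary orbit $\corbit_0=(s_0,Y_0,x_0,P_0)$ gluing the two sides together. Applying the Lyapunov center manifold theorem to vector fields near $Y_0$ produces a $C^1$ family of two-dimensional invariant submanifolds $W_Y\subset M$ through the zero $x(Y)$ of $Y$, tangent to an extension $P(Y)$ of $P_0$. Pulling the periodic orbit equation back to $W_Y$ and performing a Lyapunov--Schmidt reduction against the two-dimensional kernel of $\partial_t-s_0\,d_{x_0}Y_0|_{P_0}$ produces a Hopf normal-form bifurcation equation
\[
\tracet(Y,x(Y),P(Y)) + c(Y)\,\rho^2 + O(\rho^3) = 0,
\]
where $\rho$ is the amplitude of the bifurcating periodic orbit. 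Under the generic Hopf nondegeneracy $c(Y_0)\neq 0$, the signed coordinate $\rho$ (nonpositive on the ghost branch, nonnegative on the periodic branch) together with coordinates along $\Bcal^d$ gives a $C^1$ chart in which $\Bcal^d$ is $\{\rho=0\}$ and the two strata appear as $\{\rho<0\}$ and $\{\rho>0\}$; nontransverse base points are handled by a perturbation of $Y_0$ in the large parameter space $\Ycal$ and a density argument.

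Finally, the metric on $\OModuli$ is defined by patching the metric inherited from $\R^+\times\Ycal\times G$ on $\COrbits^d\cup\Bcal^d$ with a Hausdorff-type distance on periodic orbits weighted by the period difference, using the local charts near $\Bcal^d$ to verify Lipschitz equivalence on overlaps. Completeness is then established by a compactness argument: a Cauchy sequence $(\corbit_n)\subset\OModuli^d$ projects to a convergent sequence in $\Ycal$, its periods are uniformly bounded, and Arzel\`a--Ascoli extracts a limiting loop in $M$; a non-constant limit lies in $\EOrbits^d/S^1$, and a constant limit at a zero $x_\infty$ of the limiting vector field $Y_\infty$ carries, by spectral continuity of $d_{x_\infty}Y_\infty$, a limiting invariant plane $P_\infty$ producing a point of $\COrbits^d\cup\Bcal^d$. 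The main obstacle will be this last case: when the linearizations $d_{x_n}Y_n$ exhibit eigenvalue collisions along the sequence, identifying the limiting plane $P_\infty$ unambiguously requires the $C^1$ chart constructed above, which simultaneously supplies the local manifold structure and pins down the correct limit for the metric convergence.
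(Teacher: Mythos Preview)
Your approach is genuinely different from the paper's, and worth comparing.  The paper does \emph{not} use center manifold theory or a Hopf normal form.  Instead, near a boundary orbit $\corbit_0=(s_0,Y_0,x_0,P_0)$ it introduces an auxiliary moduli space $\EOrbits(U,V)$ of solutions to the interpolated equation
\[
\dot\gamma \;=\; t\, s\, Z\circ\gamma \;+\; (1-t)\, s\, d_{x_Z}Z\cdot(\gamma-x_Z),\qquad t\in[0,1],
\]
and constructs an explicit vector field on $\EOrbits(U,V)$ in the $\partial_t$ direction whose flow carries a neighborhood in $\EOrbits_1$ (actual periodic orbits) to a neighborhood in $\EOrbits_0$ (periodic orbits of the \emph{linearized} field).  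Since $\EOrbits_0/S^1$ is identified explicitly with an open subset of $\R^+\times\Bcal$, this produces the collar on $\oEOrbits^1/S^1$ directly, with no nondegeneracy hypothesis beyond $d_{x_0}Y_0$ being nonsingular.  Your center manifold plus Lyapunov--Schmidt route is a reasonable alternative in principle, but the interpolation argument has the advantage of bypassing any condition on the cubic Hopf coefficient.

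This brings me to the one genuine gap in your proposal.  The hypothesis $c(Y_0)\neq 0$ is both unnecessary and, in the way you use it, problematic.  It is unnecessary because you are building the \emph{total} space over all of $\Ycal$: your reduced bifurcation function $G(Y,\rho)=\tracet(Y)+c(Y)\rho^2+O(\rho^3)$ satisfies $\partial_\rho G|_{\rho=0}=0$ identically (by $S^1$-equivariance), so $\rho$ is never the direction of transversality; the surjectivity you need comes from $\partial_Y\tracet$, which holds without any condition on $c$.  The ``density argument'' you propose for the degenerate points is not a valid way to manufacture a $C^1$ Banach manifold structure: knowing charts on a dense open set does not produce charts at the remaining points.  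If you rewrite the Lyapunov--Schmidt step so that the transverse coordinate is taken in the $Y$-direction rather than in $\rho$, the argument goes through uniformly.  A smaller point: your ``signed coordinate $\rho$'' is undefined on the ghost side, where there is no amplitude; what you actually need is simply a collar coordinate on $\oCOrbits^d$ (e.g.\ $-\tracet$), glued to the $\rho$-collar on the periodic side along $\Bcal^d$.
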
  

For $Y\in \Ycal$, let $\OModuli_Y=\pi^{-1}(Y)$, $\Bcal_Y=\OModuli_Y\cap\Bcal$ and $\COrbits_Y=\OModuli_Y\cap\COrbits$. In view of Theorem~\ref{thm:intro-gluing}, a sequence of periodic orbits may converge to a boundary orbit, which is also the limit point of a sequence of ghost orbits. Instead of extending the definition of the weight function to the open and compact subsets of $\oEOrbits_Y/S^1=\Bcal_Y\cup(\EOrbits_Y/S^1)$, it is thus more natural to consider open and compact subset of $\OModuli_Y$. The following theorem is the strong version of Theorem~\ref{thm:intro-nonsingular-case}.

\begin{thm}\label{thm:intro-general-case}
Associated with every $Y\in\Ycal$ and every compact and open subset $\Gamma$ of $\OModuli_Y$ is a weight $n(\Gamma)\in\Z$. This weight function satisfies the following two properties:
\begin{itemize}
\item If $\Gamma$ consists of a periodic orbit of degree $1$ with linearized holonomy $\Fmap:\R^{n-1}\ra \R^{n-1}$, where $\Fmap-Id$ is non-singular, then $n(\Gamma)=\sgn(\det(\Fmap-Id))$.
\item If $\Gamma$ consists of a single ghost orbit $\corbit=(s,Y,x,P)$ and $d_xY$ has no eigenvalues on $i\R$ and the complex eigenspace of one of its eigenvalues is $1$-dimensional, then $n(\Gamma)=-\sgn(\det(d_xY))$.
\item If $\vY:[0,1]\ra \Ycal$ is a path of vector fields connecting $Y_0,Y_1\in\Ycal$ and $\vGamma\subset\OModuli$ is a compact and open subset which intersects $\EOrbits_{\vY(t)}$ in $\Gamma_t$, we have $n(\Gamma_0)=n(\Gamma_1)$.
\end{itemize}
\end{thm}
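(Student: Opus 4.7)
The plan is to define $n(\Gamma)$ via oriented degree theory applied to the Fredholm projection $\pi:\OModuli\to\Ycal$ furnished by Theorem~\ref{thm:intro-gluing}. Since $\pi$ restricts on each component $\OModuli^d$ to a Fredholm map of index zero between Banach manifolds with boundary $\Bcal^d$, a Sard--Smale signed intersection count produces a well-defined integer local degree, and $n(\Gamma)$ is this degree for the restriction of $\pi$ to a suitable thickening of $\Gamma$.

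First I would thicken $\Gamma\subset\OModuli_Y$ to a compact open neighborhood $\vGamma\subset\pi^{-1}(\Ucal)$ over a small neighborhood $\Ucal\subset\Ycal$ of $Y$; such a thickening exists because $\Gamma$ is open in $\OModuli_Y$ and compact, which forces $\pi|_{\vGamma}$ to be proper over $\Ucal$ after possibly shrinking both. A Sard--Smale argument yields a regular value $Y'\in\Ucal$ of $\pi|_{\vGamma}$, whose preimage is finite, and each point carries a sign $\pm 1$ from the canonical orientation of the Fredholm index bundle of $\pi$ on $\OModuli^d$. Define $n(\Gamma)$ as the resulting signed count. Since this count is locally constant on the dense set of regular values of a proper index-zero Fredholm map, the definition is independent of $Y'$, $\vGamma$, and $\Ucal$. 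The deformation property under paths $\vY:[0,1]\to\Ycal$ then follows by applying the same recipe to the parametric projection $\pi^{-1}(\vY([0,1]))\cap\vGamma\to[0,1]$, which is Fredholm of index one; after a generic perturbation the preimage becomes a compact oriented $1$-manifold whose oriented boundary is $\Gamma_0\sqcup(-\Gamma_1)$, forcing $n(\Gamma_0)=n(\Gamma_1)$.

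To confirm the two normalization formulas, I would linearize $\pi$ at each model point. At a non-degenerate degree-one periodic orbit, quotienting by the flow direction and the $S^1$-reparameterization identifies the vertical part of $d\pi$ with $\Fmap-Id:\R^{n-1}\to\R^{n-1}$, producing the orientation sign $\sgn(\det(\Fmap-Id))$. At a ghost orbit $\corbit=(s,Y,x,P)$ satisfying the stated spectral hypotheses, the implicit function theorem expresses the infinitesimal variations of $x$ and $P$ in terms of variations of $Y$ via $(d_xY)^{-1}$ and the spectral projector onto $P$, and tallying these contributions yields $-\sgn(\det(d_xY))$. The minus sign is not arbitrary: it is dictated by the requirement that the orientations on $\EOrbits^d/S^1$ and $\COrbits^d$ extend continuously to a consistent orientation of $\OModuli^d$ across the boundary component $\Bcal^d$, which is itself forced by the deformation invariance statement.

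The main obstacle is precisely this orientation compatibility across $\Bcal$. A compact open $\Gamma\subset\OModuli_Y$ may contain boundary orbits, and in a generic family $\vY$ a ghost orbit bifurcates into a periodic orbit at a Hopf-type crossing of $\Bcal$ at which $\tracet(\corbit)$ changes sign; the orientations on the two sides must match so that such a crossing contributes zero net count. Verifying this compatibility by working in a Hopf normal form for $Y$ near $x$ restricted to the invariant plane $P$, and computing the linearization of $\pi$ on each side of $\Bcal$, is the technical heart of the argument. Once it is in place, both normalization formulas and the path invariance are immediate consequences of the Fredholm degree formalism outlined above.
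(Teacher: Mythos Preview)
Your degree-theoretic outline is right in spirit for degree-$1$ periodic orbits and for the Hopf crossings at $\Bcal$, but it overlooks the central difficulty: the decomposition $\OModuli=\bigcup_d\OModuli^d$ is a stratification, not a partition into closed pieces, and Fredholm degree on the individual strata is not invariant along paths. Concretely, a sequence of \emph{embedded} periodic orbits in $\OModuli^1$ can converge to a double cover $2\star\orbit\in\OModuli^2$; this occurs generically in a one-parameter family exactly when $-1$ enters the spectrum of $\Fmap_\orbit$ (see Example~\ref{ex:2} and ``Case B'' in the proof of Theorem~\ref{thm:invariance-count-function}). At such a moment your parametric cobordism $\pi^{-1}(\vY)\cap\vGamma\cap\OModuli^1$ acquires an interior end, so its boundary is no longer $\Gamma_0\sqcup(-\Gamma_1)$; and near $2\star\orbit$ the full space $\OModuli_\vY$ is not a $1$-manifold at all, so there is no ambient degree theory to fall back on. If you run your recipe on Example~\ref{ex:2} you will find that the signed count of $\Gamma^2_t$ jumps as $t$ crosses $0$, whichever orientation convention you adopt for the double cover.

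The paper's remedy is to abandon the Fredholm-degree sign at multiple covers and instead assign $n(d\star\orbit)$ by the formula of Definition~\ref{defn:sign}: namely $\epsilon_1(\orbit)$, $(\epsilon_2(\orbit)-\epsilon_1(\orbit))/2$, and $0$ for $d=1$, $d=2$, and $d>2$ respectively. These weights are \emph{not} local orientation signs of any index-zero Fredholm map; they are engineered so that the embedded orbit lost at a period-doubling is exactly compensated by the jump in $n(2\star\orbit)$ when $\epsilon_2(\orbit)$ flips. Establishing this compensation---and showing that no other bifurcations occur along generic paths---requires the Kuranishi model near $2\star\orbit$ built in \S4.2 (the map $p_\vorbit$ and Theorem~\ref{thm:super-nice-paths}) together with the sign bookkeeping summarized in Figure~\ref{fig:local-contribution}. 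You single out the Hopf crossing as ``the technical heart'', but in the paper that case is the routine one; the period-doubling bifurcation is where the non-obvious weight formula, and the real work, live.
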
 

\begin{examp}\label{ex:intro-2}
In Example~\ref{ex:intro-1}, if $\vY$ is a path of (possibly singular) vector fields on $S^3$ which connects the Hopf vector field to a non-singular vector field without periodic orbits, it follows that for every period $s>2\pi$, there is a time $t\in[0,1]$  such that $\vY(t)$ either has a periodic orbit of period $s$, or a zero $x$ such that $d_xY$ has an eigenvalue with imaginary part equal to $2\pi/s$: otherwise, we take $\vGamma$ to be the union of ghost and periodic orbits of period smaller than $s$ in $\OModuli_\vY$, and arrive at a contradiction, similar to Example~\ref{ex:intro-1}. If $\vY$ is a path of volume preserving vector fields (i.e. the corresponding flows preserve the standard volume form on $S^3$),  all ghost $\vY$-orbits have negative weight. It follows from this observation that for every period $s>2\pi$, there is a time $t\in[0,1]$  such that $\vY(t)$ has a periodic orbit of period $s$. 
\qed
\end{examp}  

The technical tool needed for proving Theorem~\ref{thm:intro-general-case} is the construction the Kuranishi models which describe the local structure of $\OModuli_\vY$ for generic paths of vector fields. Two Kuranishi models play an essential role in this direction. The first model was already encountered in \cite{Ef-p} as the moduli space of closed geodesics was investigated. The second model is only encountered when we deal with singular vector fields. In both cases, the required transversality arguments are presented using the language of holonomy maps and the differential of the vector fields at their zeros, which is a major simplification in comparison with the arguments of \cite{Ef-p}. The structure of the arguments are close to the arguments of \cite{Taubes} and \cite{Ef-rigidity} (see also \cite{White-1,White-2}). A systematic discussion of such arguments, which address the notion of {\emph{super-rigidity}} for generic elements in families of elliptic operators may be found in \cite{DW}.

\section{Moduli spaces of ghost and periodic orbits}\label{sec:moduli}
\subsection{Vector fields and their zeros}
Let $M$ be a smooth closed manifold of dimension $n$ with tangent bundle $\rho=\rho_{TM}:TM\ra M$, and $\rho_G:G=G_M\ra M$ denote the fiber bundle over $M$, where the fiber $G_x$ at $x\in M$ is the Grassmannian $\mathbb{G}(T_xM,2)$ of $2$-planes $P$ in $T_xM$.  We denote the points of $G$ by $(x,P)$ where $x\in M$ and $P\subset T_xM$ are as before. The space of $C^1$ sections of $TM$ is denoted by $\Ycal=\Ycal(M)$. Every vector field $Y\in\Ycal$ determines a flow $F_Y:\R\times M\ra M$, which satisfies $F_Y(0,x)=x$ and $dF_Y/dr=Y\circ F_Y$ (here $r$ is the variable parametrizing $\R$). 
Define 
\[\Ncal=\left\{(Y,x,P)\in\Ycal\times G\ \big|\ Y(x)=0\quad\text{and}\quad  d_xY(P)=P\right\}.
\]
For $\xfrak=(Y,x,P)\in\Ncal$, the trace $\tfrak_\Ncal(\xfrak)$ of the restriction $d_xY:P\ra P$ gives a map $\tfrak_\Ncal:\Ncal\ra \R$.
\begin{lem}\label{lem:blow-up}
 $\Ncal$ is a $C^1$ Banach manifold and the projection map  $q_\Ncal:\Ncal\ra \Ycal$ is Fredholm of index $0$. Moreover, the trace map $\tfrak_\Ncal:\Ncal\ra \R$ is regular. Therefore, $\Ncal_r=\tfrak_\Ncal^{-1}(r)$ is a $C^1$ Banach manifold and the projection map $q_r:\Ncal_r\ra \Ycal$ is Fredholm of index $-1$ for all $r\in\R$. 
\end{lem}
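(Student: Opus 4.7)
I would realise $\Ncal$ as the transverse zero locus of two successive $C^1$ sections of finite-rank bundles over $\Ycal\times G$, then apply the implicit function theorem in Banach manifolds, and finally handle the trace map by a single additional transversality check. The analytic content in each transversality step is limited to choosing a vector field $Z\in\Ycal$ with prescribed $0$-jet or $1$-jet at a point of $M$, which is routine by a bump function in a chart.

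\textbf{Two-stage cutting out.} For the first stage, set $\widetilde{\Zcal}=\{(Y,x,P)\in\Ycal\times G:Y(x)=0\}$, viewed as the zero locus of the $C^1$ section $(Y,x,P)\mapsto Y(x)$ of the pullback of $TM$ to $\Ycal\times G$. Its vertical derivative at a zero sends $(Z,v,\xi)$ to $Z(x)+d_xY(v)\in T_xM$, which is surjective because any vector in $T_xM$ arises as $Z(x)$ for some $Z\in\Ycal$. Hence $\widetilde{\Zcal}$ is a $C^1$ Banach submanifold of $\Ycal\times G$ whose projection to $\Ycal$ is Fredholm of index $\dim G-n=2(n-2)$. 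For the second stage, let $F\to\widetilde{\Zcal}$ be the rank-$2(n-2)$ bundle with fiber $\mathrm{Hom}(P,T_xM/P)$ at $(Y,x,P)$, and consider $\Phi(Y,x,P)=\pi_P\circ d_xY|_P$, where $\pi_P\colon T_xM\to T_xM/P$ is the quotient projection; this is well defined since $Y(x)=0$, and $\Ncal=\Phi^{-1}(0)$. Perturbing $Y$ by $Z$ with $Z(x)=0$ changes $\Phi$ by $\pi_P\circ d_xZ|_P$, and choosing $Z$ in a chart with prescribed linearization $d_xZ$ realises any element of $\mathrm{Hom}(P,T_xM/P)$. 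Consequently $\Phi$ is transverse to the zero section, $\Ncal$ is a $C^1$ Banach submanifold of codimension $2(n-2)$ in $\widetilde{\Zcal}$, and $q_\Ncal$ is Fredholm of index $2(n-2)-2(n-2)=0$.

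\textbf{Trace map and $\Ncal_r$.} Given $(Y,x,P)\in\Ncal$, pick a chart around $x$ in which $P=\mathrm{span}(\partial_{x^1},\partial_{x^2})$, and a $C^1$ vector field $Z$ supported in the chart with $Z(x)=0$, $d_xZ(P)\subseteq P$, and $d_xZ|_P=\mathrm{Id}_P$. Then $(Z,0,0)$ is tangent to $\Ncal$ at $(Y,x,P)$ and the derivative of $\tfrak_\Ncal$ along it equals $\trace(d_xZ|_P)=2\neq 0$. Therefore $\tfrak_\Ncal$ is a submersion, $\Ncal_r=\tfrak_\Ncal^{-1}(r)$ is a codimension-one $C^1$ Banach submanifold of $\Ncal$ for every $r\in\R$, and a short case analysis (depending on whether $d\tfrak_\Ncal$ restricted to $\ker dq_\Ncal$ is surjective, in which case the kernel shrinks by one and the cokernel is unchanged, or vanishes identically, in which case the kernel is preserved and the cokernel grows by one) shows the Fredholm index of $q_r=q_\Ncal|_{\Ncal_r}$ is exactly one less than that of $q_\Ncal$, hence equal to $-1$.

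\textbf{Where the work is.} There is no substantial obstacle. The one design choice worth making explicit is cutting $\Ncal$ out in two successive stages rather than in one, which avoids having to define $d_xY$ on all of $\Ycal\times G$ (where it would depend on a choice of connection); this gives no additional insight and only introduces book-keeping.
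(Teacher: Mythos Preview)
Your proof is correct and follows essentially the same two-stage strategy as the paper: first cut out the zero locus of $Y$, then impose the $P$-invariance condition via a section of a $\mathrm{Hom}(P,T_xM/P)$-type bundle, with transversality in each step coming from the freedom to prescribe the $0$-jet and $1$-jet of a perturbing $Z\in\Ycal$. The only notable difference is in the trace step: you verify regularity of $\tfrak_\Ncal$ directly by exhibiting a tangent vector $(Z,0,0)\in T\Ncal$ with $d\tfrak_\Ncal(Z,0,0)=2$, whereas the paper argues indirectly that the transversality for $\Ncal$ already holds using only perturbations with vanishing trace on $P$, which is the dual formulation of the same fact.
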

\begin{proof} It is straight-forward to observe that 
\[\Ncal^c=\{(Y,x)\in\Ycal\times M\ \big|\ Y(x)=0\}\]
is a $C^1$ Banach manifold. Let $\rho_c:\Ncal^c\ra M$ denote the projection and $\Ucal$  denote the subset of $ \rho_{c}^*G$ which consists of the triples $(Y,x,P)$ such that $d_xY|_P$ is non-degenerate. Define the bundle map 
\[\Psi:\Ucal\ra \rho_{c}^*G,\quad \Psi(\xfrak):=(Y,x,d_xY(P)),\quad\quad\forall\ \xfrak=(Y,x,P)\in\Ucal.\]  
Note that $\Psi$ intersects the identity map $I:\Ucal\ra \Ucal\subset \rho_{c}^*G$ in  $\Ncal$. It thus suffices to show that the intersection of $\Psi$ with $I$ is transverse. Consider a coordinate chart on $M$ which identifies a neighborhood of $x\in M$ with a neighborhood of $0\in\R^n$. Furthermore, assume that $P$ corresponds to $\R^2\times \{0\}\subset \R^2\times \R^{n-2}=\R^n$. This also gives a trivialization of $G$ over the coordinate chart around $x\in M$. The tangent space of the fiber $G_x$ at $P$ may be described, using the coordinate chart, as follows. Let $e_1,\ldots,e_n$ denote the standard coordinate vectors in $\R^n$. Then every plane $P'$ which is sufficiently close to $P$ is spanned by a pair of vectors of the form $e_1'=e_1+f_1$ and $e_2'=e_2+f_2$, where $f_1,f_2\in\R^{n-2}=\langle e_3,\ldots,e_n\rangle$. Therefore, $T_PG_x$ is identified with 
\[\R^{n-2}\oplus\R^{n-2}=\langle e_3,\ldots,e_n\rangle\oplus\langle e_3,\ldots,e_n\rangle.\]
The vector space $T_{\xfrak}\rho_{c}^*G$   includes $\Ycal_x\times\{0\}\subset\Ycal\oplus T_{(x,P)}G$, where $\Ycal_x$ consists of all $Z\in\Ycal$ with $Z(x)=0$. The restriction of any $Z\in\Ycal_x$ to the coordinate chart is a function from a neighborhood of $0\in\R^n$ to $\R^n$ with $Z(0)=0$. The images of $(Z,0)\in T_{\xfrak}\rho_{c}^*G$ under the differential $dI$ of $I$ and the differential $d\Psi$ of $\Psi$ may be projected over $T_PG_x=\R^{n-2}\oplus\R^{n-2}$ to obtain the vectors 
\[DI(Z,0)=0, \ \ D\Psi(Z,0)=\left(p_{\R^{n-2}}\big(d_0Z(e_1)\big),p_{\R^{n-2}}\big(d_0Z(e_2)\big)\right)\in\R^{n-2}\oplus\R^{n-2},\]
where $p_{\R^{n-2}}$ denotes the standard projection map from $\R^n$ to $\R^{n-2}$ which forgets the first two entries. Changing $Z$ in $\Ycal_x$, the above vector in $\R^{n-2}\oplus\R^{n-2}$ can take any value. This implies that the intersection of $I$ and $\Psi$ in $\Ncal$ is transverse. Hence, $\Ncal$ is a $C^1$ Banach manifold. In order to show the regularity of $\tfrak_\Ncal:\Ncal\ra\R$, it suffices to show that the transversality may be achieved using the vector fields $Z\in\Ycal_x$ such that $\mathrm{tr}(d_xZ)=0$. This is true, since the latter condition does not affect the vectors $p_{\R^{n-2}}(d_0Z(e_j))$, for $j=1,2$. Other claims in the statement are straight-forward. 
\end{proof}

If $\xfrak=(Y,x,P)\in\Ncal$, the restriction of $d_xY$ to $P$ has a pair of eigenvalues $\lambda_1,\lambda_2\in\C$. If both $\lambda_1$ and $\lambda_2$ are real, we set $\lambda(\xfrak)=(\lambda_1+\lambda_2)/2$. Otherwise, precisely one of $\lambda_1$ and $\lambda_2$, denoted by $\lambda(\xfrak)$, is in the upper half plane $\Hbb\subset\C$. Let $\Ncal^\circ\subset\Ncal$ denote the pre-image of $\Hbb\setminus\R$ under $\lambda:\Ncal\ra\Hbb$. For $\corbit=(s,\xfrak)\in\R^+\times\Ncal^\circ$, set  $s(\corbit)=s,\lambda(\corbit)=\lambda(\xfrak)$ and $\tfrak(\corbit)=\tfrak_\Ncal(\xfrak)$. We then define
\[\oCOrbits=\left\{\corbit\in\R^+\times \Ncal^\circ\ \big|\ s(\corbit)\cdot \lambda(\corbit)\in 2\pi\Z^+\ \ \text{and}\ \ \tfrak(\corbit)\leq 0\right\}\quad\quad\text{and}\quad\quad \Bcal=\{\corbit\in\oCOrbits\ \big|\ \tfrak(\corbit)=0\},\]
and set $\COrbits=\oCOrbits\setminus\Bcal$. The points in $\COrbits$ and $\Bcal$ are called the {\emph{ghost}} orbits and the {\emph{boundary}} orbits, respectively. Lemma~\ref{lem:blow-up} implies that $\oCOrbits$ is a $C^1$ Banach manifold with  boundary $\Bcal$. For $\corbit\in\oCOrbits$, the positive integer $s(\corbit)\lambda(\corbit)/2\pi$ is called the degree of $\corbit$ and is denoted by $\deg(\corbit)$. Then 
\[\oCOrbits=\coprod_{d\in\Z^+}\oCOrbits^d,\quad\quad \COrbits=\coprod_{d\in\Z^+}\COrbits^d\quad\quad\text{and}\quad\quad\Bcal=\coprod_{d\in\Z^+}\Bcal^d,\]
where $\oCOrbits^d,\COrbits^d$ and $\Bcal^d$ are the subsets of $\oCOrbits,\COrbits$ and $\Bcal$ (respectively) which consist of orbits of degree $d$.  Let $q:\oCOrbits\ra \Ycal$ denote the projection  map. Given $Y\in\Ycal$, we set 
\begin{align*}
\oCOrbits_Y=q^{-1}(Y)=\coprod_{d\in\Z^+}\oCOrbits^d_Y,\quad \COrbits_Y=\oCOrbits_Y\cap\COrbits=\coprod_{d\in\Z^+}\COrbits^d_Y\quad\text{and}\quad\Bcal_Y=\oCOrbits_Y\cap\Bcal=\coprod_{d\in\Z^+}\Bcal^d_Y.
\end{align*}
Let $\Ycal^\infty\subset\Ycal$ denote the subset which consists of the vector fields $Y\in\Ycal$ so that for every zero $x$ of $Y$, $d_xY:T_xM\ra T_xM$ has no eigenvalues on $i\R$, and no eigenvalue of $d_xY$ has non-trivial multiplicity (i.e. the eigenspace associated with every eigenvalue of the induced map $d_xY:T_xM\otimes_\R\C\ra T_xM\otimes_\R\C$ is $1$-dimensional). It is then easy to see that $\Ycal^\infty$ is an open and dense subset of $\Ycal$ and every $Y\in\Ycal^\infty$ is a regular values for both $q$ and its restriction   $b:\Bcal\ra\Ycal$  to $\Bcal$. 

\begin{defn}\label{def:weight-constant}
 $\corbit=(s,Y,x,P)\in\COrbits$ is called {\emph{super-rigid}} if $d_xY$ has no eigenvalues on $i\R$ and no eigenvalue of $d_xY$ has non-trivial multiplicity. For a super-rigid $\corbit\in\COrbits$ as above, define the {\emph{weight}} $n(\corbit)$  equal to $-\sgn(\det(d_xY))$ if $\deg(\corbit)=1$, and set $n(\corbit)=0$, otherwise.
\end{defn}


\subsection{Periodic orbits of vector fields as a moduli space}
 Let $\oXcal$ denote the space of maps of class $W^{2,2}$ from $S^1=\R/\Z$ to $M$ and $\Xcal$  denote the subspace of immersions. We then set 
 \[\Zcal=\R^+\times\Ycal\times \Xcal\quad\text{and}\quad\oZcal=\R^{+}\times\Ycal\times\oXcal.\] 
 For $\orbit=(s,Y,\gamma)\in\oZcal$, set $s_\orbit=s,Y_\orbit=Y$ and $\gamma_\orbit=\gamma$. $\gamma$ is  regarded as a map from $\R$ to $M$ (parametrized by the variable $\theta$), which has period $1$, i.e. $\gamma(\theta+1)=\gamma(\theta)$. If $\dot\gamma=s\cdot Y\circ \gamma$, then  $\orbit=(s,Y,\gamma)\in\Zcal$ is called a {\emph{closed $Y$-orbit}} of period (or length) $s$, where $\dot{\gamma}=\partial_\theta\gamma$ is the differential of $\gamma$ with respect to $\theta$. For $k\in\{1,2\}$ and $\gamma\in\oXcal$, denote the space of $W^{k,2}$ sections of $\gamma^*TM$ by $A^k(\gamma)=A^k(\gamma^*TM)$. Let  $\Ecal$ denote the  vector bundle over $\oZcal$ with fiber $\Ecal_\orbit= A^1(\gamma_\orbit)$ at  $\orbit$. Define the section $\Phi:\oZcal\ra \Ecal$ of the  bundle $\pi_\Ecal:\Ecal\ra \oZcal$ and the spaces of periodic orbits by 
\begin{align*}
&\Phi(\orbit)=\dgam_\orbit-{s_\orbit}\cdot Y_\orbit\circ\gamma_\orbit\in\Ecal_{\orbit}\quad\text{and}\quad  \EOrbits=\left\{\orbit\in\Zcal\ \big|\ \Phi(\orbit)=0\right\}\subset \bulletEOrbits=\left\{\orbit\in\oZcal\ \big|\ \Phi(\orbit)=0\right\},
\end{align*}
respectively. If $\gamma_x$ is the constant map with value $x\in M$ and $Y(x)=0$, then $(s,Y,\gamma_x)\in\bulletEOrbits$ for every $s\in\R^+$. The unit circle $S^1$ acts on $\bulletEOrbits$: for $\theta_0\in\R$ and $\orbit=(s,Y,\gamma)\in\bulletEOrbits$ we define $\orbit_{\theta_0}=(s,Y,\gamma_{\theta_0})$ by setting $\gamma_{\theta_0}(\theta)=\gamma(\theta+\theta_0)$. Since $\orbit_n=\orbit$ for $n\in\Z$, this gives an action of $\R/\Z$ on $\EOrbits$. The quotient of $\EOrbits$ by this action of $S^1$ is denoted by $\EOrbits/S^1$ and the class represented by $\orbit\in\EOrbits$ is denoted by $[\orbit]\in\EOrbits/S^1$. $\EOrbits/S^1$ is called the moduli space of (non-constant) periodic orbits and is fibered over $\Ycal$ via a map $p:\EOrbits/S^1\ra \Ycal$. At $\orbit\in\bulletEOrbits\subset\oZcal$ the tangent space to $\Ecal$ naturally decomposes as 
\begin{align*}
T_{\Phi(\orbit)}\Ecal=\R\oplus T_{Y_\orbit}\Ycal\oplus T_{\gamma_\orbit}\Xcal\oplus \Ecal_{\orbit}.
\end{align*}
Note that $T_Y\Ycal=\Ycal$ and $T_\gamma\Xcal= A^2(\gamma) $. Project the differential $d\Phi$ of $\Phi$ over $\Ecal_{\orbit}$ to obtain 
\begin{align*}
F_\orbit:\R\oplus \Ycal\oplus A^2(\gamma)\ra \Ecal_{\orbit}= A^1(\gamma).
\end{align*}
Let $\EOrbits^\circ$ denote the open subset of $\bulletEOrbits$ consisting of $\orbit\in\bulletEOrbits$ such that $F_\orbit$ is surjective. The intersection of $\Phi$ with the zero section of $\Ecal$ is transverse at every $\orbit\in\EOrbits^\circ$. In particular, $\EOrbits^\circ$ is cut out as a $C^{1}$ separable Banach manifold. The projection map $p_\circ=p|_{\EOrbits^\circ}:\EOrbits^\circ\ra \Ycal$ is  Fredholm  of index $1$.\\ 
 
The Sobolev spaces $\Fcal_\orbit=\R\oplus A^2(\gamma_\orbit)$ form a bundle $\pi_\Fcal:\Fcal\ra \oZcal$.  Denote the Fredholm restriction of $F_\orbit$ to $\Fcal_\orbit$ by $\Psi_\orbit$, for $\orbit\in\bulletEOrbits$. These linear maps give a bundle homomorphism $\Psi:\Fcal\ra \Ecal$ over $\bulletEOrbits$. $\orbit\in\EOrbits^\circ$ is regular for  $p_\circ$ if and only if $\Psi_\orbit$ is surjective (with $1$-dimensional kernel). For $\orbit\in\bulletEOrbits$, every tangent vector $\xi\in A^2(\gamma_\orbit) =T_{\gamma_\orbit}\Xcal$ gives a map from $S^1$ to $TM$ such that $\rho\circ\xi:S^1\ra M$ is the map $\gamma_\orbit$. Since $Y_\orbit$ is a section of $TM$, its differential $dY_\orbit:TM\ra T(TM)$ may be composed with $\xi$ to give 
\[dY_\orbit\circ \xi:S^1\ra T(TM).\] 
Applying $d\xi:TS^1\ra T(TM)$  to  $\partial_\theta:S^1\ra TS^1$ gives $d\xi\circ\partial_\theta:S^1\ra T(TM)$. The image of 
\begin{align*}
d\xi\circ\partial_\theta-{s_\orbit}\cdot dY_\orbit\circ\xi:S^1\ra T(TM)
\end{align*}
under $d\rho$ is $\dgam_\orbit-s_\orbit\cdot Y_\orbit\circ\gamma_\orbit$, which is zero. Therefore, $d\xi\circ\partial_\theta-s_\orbit\cdot dY_\orbit\circ\xi$ is in fact a vector in $TM$. By a direct computation in local coordinates, one finds the following lemma.

\begin{lem}\label{lem:differential}
For every  $\orbit\in\bulletEOrbits$, $s\in\R$, $Y\in\Ycal$ and  $\xi\in A^2(\gamma_\orbit)$,  we have 
\begin{align*}
F_\orbit(s,Y,\xi)= d\xi\circ\partial_\theta-s\cdot Y_\orbit\circ\gamma_\orbit-s_\orbit\cdot Y\circ\gamma_\orbit-s_\orbit\cdot dY_\orbit\circ\xi.
\end{align*}
\end{lem}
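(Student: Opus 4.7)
The formula is the derivative of a smooth section at a single point, so the strategy is a direct Taylor expansion of $\Phi$ along a chosen variation $(s,Y,\xi)$ of $\orbit$. The only mild subtlety is that the bundle $\Ecal$ has $\orbit$-dependent fibers, so to speak of the projection of $d\Phi$ onto $\Ecal_\orbit$ we must first specify how nearby fibers are identified with $\Ecal_\orbit$; and then the resulting coordinate formula must be matched with the coordinate-free expression in the statement.

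First I would fix the variation. Pick a smooth family $\orbit_t=(s_\orbit+ts,\, Y_\orbit+tY,\,\gamma_t)\in\oZcal$ with $\gamma_0=\gamma_\orbit$ and $\partial_t\gamma_t|_{t=0}=\xi$ in $A^2(\gamma_\orbit)$. A convenient choice is $\gamma_t(\theta)=\exp_{\gamma_\orbit(\theta)}(t\xi(\theta))$ for any background metric on $M$; parallel transport along the geodesic rays simultaneously trivializes $\Ecal_{\orbit_t}$ over $\Ecal_\orbit$ to first order in $t$, so that $\partial_t\Phi(\orbit_t)|_{t=0}\in\Ecal_\orbit$ is well-defined. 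Then I would carry out the expansion in local charts on $M$ covering $\gamma_\orbit(S^1)$, where the routine componentwise computations
\begin{align*}
\gamma_t &= \gamma_\orbit + t\xi + O(t^2),\\
\dgam_t &= \dgam_\orbit + t\,\partial_\theta\xi + O(t^2),\\
(Y_\orbit+tY)\circ\gamma_t &= Y_\orbit\circ\gamma_\orbit + t\bigl(Y\circ\gamma_\orbit + (d_{\gamma_\orbit}Y_\orbit)(\xi)\bigr) + O(t^2)
\end{align*}
hold. Substituting into $\Phi(\orbit_t)=\dgam_t-(s_\orbit+ts)(Y_\orbit+tY)\circ\gamma_t$ and collecting the linear coefficient in $t$ produces
\[
\partial_\theta\xi \;-\; s\cdot Y_\orbit\circ\gamma_\orbit \;-\; s_\orbit\cdot Y\circ\gamma_\orbit \;-\; s_\orbit\cdot (dY_\orbit)(\xi),
\]
which is the content of the lemma, written in local coordinates.

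Finally I would translate this back into the intrinsic language of the statement. As the paragraph preceding the lemma explains, the a priori $T(TM)$-valued expression $d\xi\circ\partial_\theta-s_\orbit\cdot dY_\orbit\circ\xi$ has $d\rho$-image equal to $\dgam_\orbit-s_\orbit\cdot Y_\orbit\circ\gamma_\orbit$, which vanishes because $\orbit\in\bulletEOrbits$; hence it takes values in the vertical bundle $\ker d\rho\subset T(TM)$, which is canonically identified with $\rho^*TM$. Under this identification, in any linear chart the expression $d\xi\circ\partial_\theta-s_\orbit\cdot dY_\orbit\circ\xi$ corresponds precisely to $\partial_\theta\xi-s_\orbit(dY_\orbit)(\xi)$ from the coordinate computation, and the remaining two terms $-s\cdot Y_\orbit\circ\gamma_\orbit$ and $-s_\orbit\cdot Y\circ\gamma_\orbit$ are already sections of $\gamma_\orbit^*TM$, so the asserted equality follows. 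The only bookkeeping worth flagging is the first-order triviality of the chosen fiberwise identification; since $F_\orbit$ depends only on the $1$-jet of $\Phi$ at $\orbit$, any other smooth trivialization produces the same answer.
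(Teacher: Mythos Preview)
Your proposal is correct and follows exactly the approach the paper indicates: the paper's entire proof is the single sentence ``By a direct computation in local coordinates, one finds the following lemma,'' and your Taylor expansion along a variation $\orbit_t$ in charts is precisely that computation, carried out in more detail than the paper bothers to record. Your remarks on trivializing $\Ecal_{\orbit_t}$ over $\Ecal_\orbit$ to first order and on interpreting $d\xi\circ\partial_\theta-s_\orbit\cdot dY_\orbit\circ\xi$ via the vertical subbundle are the only points that require any care, and you handle both correctly.
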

\begin{lem}\label{lem:constant-maps}
Let $\orbit=(s,Y,\gamma)\in\bulletEOrbits$, where $\gamma=\gamma_x$ is the constant loop based at $x\in M$. Then $\Psi_\orbit$ (resp. $F_\orbit$) is surjective if and only if  $d_xY$ has no eigenvalues in $\frac{2\pi i}{s}\cdot \Z$ (resp. in $\frac{2\pi i}{s}\cdot \Z\setminus\{0\}$).
\end{lem}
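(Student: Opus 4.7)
The plan is to apply Lemma~\ref{lem:differential} to the constant loop $\gamma_x$ and then analyze the resulting constant-coefficient operator by Fourier decomposition. Since $\orbit = (s, Y, \gamma_x) \in \bulletEOrbits$ with $\dot\gamma_x = 0$, the equation $\Phi(\orbit) = 0$ forces $Y(x) = 0$. Under the trivialization $\gamma_x^*TM = S^1\times T_xM$, the space $A^k(\gamma_x)$ becomes $W^{k,2}(S^1, T_xM)$, the term $Y_\orbit\circ\gamma_x$ vanishes identically, and the term $dY_\orbit\circ\xi$ appearing in Lemma~\ref{lem:differential} becomes pointwise $A\xi(\theta)$, where $A := d_xY:T_xM\to T_xM$. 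Hence Lemma~\ref{lem:differential} simplifies to
\[F_\orbit(\tau, Z, \xi) = \dot\xi - sZ(x) - sA\xi,\qquad \Psi_\orbit(\tau, \xi) = \dot\xi - sA\xi,\]
both of which are independent of the $\tau$-factor. It therefore suffices to analyze the bounded operator $L := \partial_\theta - sA : W^{2,2}(S^1, T_xM) \to W^{1,2}(S^1, T_xM)$.

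After complexifying, every $\xi \in W^{2,2}(S^1, T_xM\otimes_\R\C)$ has a unique Fourier expansion $\xi(\theta) = \sum_{k\in\Z} c_k\,e^{2\pi ik\theta}$ with $c_k \in T_xM\otimes_\R\C$, and $L\xi$ has $k$-th Fourier coefficient $(2\pi ik\,I - sA)c_k$. Thus $L$ splits as a direct sum over $k\in\Z$ of the finite-dimensional operators $2\pi ik\,I - sA$ acting on the $k$-th mode. Its range equals all of $W^{1,2}(S^1, T_xM)$ exactly when every such operator is invertible, which fails precisely when $sA$ has an eigenvalue in $2\pi i\Z$, i.e.\ when $A$ has an eigenvalue in $\tfrac{2\pi i}{s}\Z$. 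This settles the assertion for $\Psi_\orbit$.

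For $F_\orbit$, the vector-field variation $Z$ contributes the constant vector $-sZ(x)$, and as $Z$ ranges over $\Ycal$ the value $Z(x)$ ranges over all of $T_xM$. Hence the image of $F_\orbit$ equals $\Image(L) + T_xM$, where $T_xM$ sits inside $W^{1,2}(S^1, T_xM)$ as the $k=0$ Fourier mode. The inclusion of this constant subspace covers the $k=0$ mode unconditionally, while for $k\neq 0$ the surjectivity condition is the same as for $L$. Consequently $F_\orbit$ is surjective if and only if $A$ has no eigenvalue in $\tfrac{2\pi i}{s}\Z\setminus\{0\}$, completing the proof. The only step requiring any unpacking is the identification of $dY_\orbit\circ\xi$ with $A\xi$, which is immediate in local coordinates around $x$ once we use $Y(x)=0$ to place us on the zero section of $TM$.
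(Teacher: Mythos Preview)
Your proof is correct and follows essentially the same approach as the paper: both identify $F_\orbit$ via Lemma~\ref{lem:differential} as the constant-coefficient operator $\dot\xi - sA\xi - sZ(x)$ with $A=d_xY$, and then analyze its surjectivity. The only cosmetic difference is that the paper puts $A$ into Jordan form and solves the scalar ODEs $\dot f-\lambda f=0$ directly, whereas you diagonalize $\partial_\theta$ by Fourier expansion and reduce to the finite-dimensional operators $2\pi ik\,I-sA$ on each mode; these are equivalent diagonalizations of the same problem, and your treatment of the $F_\orbit$ case (the $k=0$ mode is covered by the $Z(x)$-variation while the $k\neq0$ modes are unaffected) matches the paper's observation that adding constants cannot repair the cokernel coming from an eigenvalue in $\tfrac{2\pi i}{s}(\Z\setminus\{0\})$.
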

\begin{proof} Note that $Y(x)=0$. Identify $T_xM$ with $\R^n$, and note that under this identification, every $\xi\in A^2(\gamma)$ is in fact a function in $A^2(\R^n)$. We then have 
\begin{align*}
F_\orbit(r,Z,\xi)=\dot{\xi}-s\cdot (d_xY)\cdot \xi-s\cdot Z(x)\in A^1(\R^n),\quad\forall\ (r,Z,\xi)\in\R\times\Ycal\times A^2(\R^n).
\end{align*}
The surjectivity of $\Psi_\orbit$ follows, if one shows that the equation $\Psi_\orbit(\xi)=\dot{\xi}-s\cdot(d_xY)\cdot \xi=0$ has no non-zero solutions $\xi\in A^2(\C^n)$. Choose a complex linear change of variables in $\C^n=T_xM\otimes_\R\C$ and assume that  $d_xY$ is in the standard Jordan form
\begin{align*}
d_xY=\colvec[1]{\Lambda_1&0&\cdots&0\\
	0&\Lambda_2&\cdots&0\\
	\vdots&\vdots&\ddots&\vdots\\
	0&0&\cdots&\Lambda_m},\quad\text{where}\quad
\Lambda_k=\colvec[1]{\lambda_k&1&0&\cdots&0\\
0&\lambda_k&1&\cdots&0\\
0&0&\lambda_k&\cdots&0\\
\vdots&\vdots&\vdots&\ddots&\vdots\\
0&0&0&\cdots&\lambda_k}\in M_{n_k\times n_k}(\C),
\end{align*}
and $\sum_{k=1}^mn_k=n$. To solve $\Psi_\orbit(\xi)=0$, one should solve $\dot{f}_k=s\Lambda_k f_k$ for $f_k:S^1\ra \C^{n_k}$, which has non-trivial solution if and only if the kernel of $L_\lambda$ is trivial for $\lambda=s\cdot\lambda_k$, where
\begin{align*}
L_\lambda: A^2(S^1,\C)\ra A^1(S^1,\C),\quad L_\lambda(f):=\dot{f}-\lambda\cdot f.
\end{align*}
 If $f\in\Ker(L_{\lambda})$, then $f(\theta)=ae^{\lambda\theta}$ for some $a\in\C$. For such $f$ to be periodic (of period $1$) one needs to have $\lambda\in 2\pi i\Z$. Therefore, $\Psi_\orbit$ has non-trivial kernel if and only if $d_xY$ has an eigenvalue in $\frac{2\pi i}{s}\cdot\Z$. This completes the proof of the first claim. On the other hand, if $s\cdot\lambda_k=2\pi id$  and $d\neq 0$, then 
\begin{align*}
L_{2\pi i d}\Big(\frac{-c}{2\pi i d}\Big)=c\quad\Rightarrow\quad\left\{L_{2\pi i d}(f)+c\ |\ f\in A^2(\C),c\in\C\right\}=\Image(L_{2\pi i d})\neq A^1(\C).
\end{align*}
Therefore, $F_\orbit$ is not surjective. To complete the proof, we only need to show  that every $g\in A^1(\C)$ is of the form $c+\dot{f}$ for a constant $c\in\C$ and a function $f\in A^2(\C)$, which is clear. 
\end{proof}

It is important to interpret Lemma~\ref{lem:constant-maps} correctly. If $\orbit=(s,Y,\gamma_x)\in\bulletEOrbits\setminus \EOrbits$ is a constant periodic orbit and $d_xY$ has no eigenvalues in $\frac{2\pi i}{s}$, the lemma implies that a (sufficiently small) neighborhood of $\orbit$ in $\bulletEOrbits$ only consists of constant orbits. In other words, $(s,Y,\gamma_x)$ is a limit point of $\EOrbits$ only if $s=\frac{2\pi i d}{\lambda}$ for an eigenvalue $\lambda$ of $d_xY$ and some $d\in\Z$.   
\subsection{The holonomy map}
 Associated with every $\orbit=(s,Y,\gamma)\in\EOrbits$ is a {\emph{degree}} $d=d_\orbit\in\Z^+$ so that $\gamma=\mu\circ \varphi_d$, where $\varphi_d:S^1\ra S^1$ is the standard map of degree $d$ which corresponds to multiplication by $d$ in $\R$, $\orbit'=(s/d,Y,\mu)\in\EOrbits$ and $\mu$ is an embedding. We then write $\orbit=d\star \orbit'$.  We denote the subspace of $\EOrbits$ which consists of periodic orbits of degree $d\in\Z^+$ by $\EOrbits^d$, i.e. $\EOrbits=\coprod_{d=1}^\infty\EOrbits^d$. If $\orbit=d\star\orbit'\in\EOrbits^d$ is as above, and $N_{\mu}$ is the normal bundle of $\mu$ in $M$, $F_{\orbit}:\R\oplus\Ycal\oplus A^2(\gamma)\ra A^1(\gamma)$ may be composed with the projection from $\gamma^*TM$ to $N_\mu$ to give a map to $A^1(\varphi_d^*N_{\mu})$. Since $F_\orbit$ preserves the tangent direction, this gives the induced operators
\[F_\orbit^\perp:\Ycal\oplus A^2(\varphi_d^*N_\mu)\ra A^1(\varphi_d^*N_\mu)\quad\text{and}\quad\Psi_{\orbit}^\perp=\Psi_{d\star\orbit'}^\perp: A^2(\varphi_d^*N_{\mu})\ra A^1(\varphi_d^*N_{\mu}).\] 
\begin{lem}\label{lem:normal-operator}
The operator $\Psi_\orbit$ (resp. $F_\orbit$) is surjective if and only if $\Psi^\perp_\orbit$ (resp. $F_\orbit^\perp$) is surjective.
\end{lem}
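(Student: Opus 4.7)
The plan is to exploit the invariance of $\Psi_\orbit$ and $F_\orbit$ under the splitting of $\gamma_\orbit^*TM$ into the tangent line spanned by $V:=Y_\orbit\circ\gamma_\orbit$ and the normal bundle $\varphi_d^*N_\mu$, and to observe that the operator induced in the tangent direction is automatically surjective with one-dimensional kernel. Since $\gamma_\orbit=\mu\circ\varphi_d$ is an immersion, $V$ is nowhere zero, so multiplication by $V$ identifies $W^{k,2}(S^1,\R)$ with a closed subspace $T^k\subset A^k(\gamma_\orbit)$. A choice of Riemannian metric then yields a topological direct sum decomposition $A^k(\gamma_\orbit)=T^k\oplus A^k(\varphi_d^*N_\mu)$ for $k\in\{1,2\}$.

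The computational heart of the argument is to verify that tangent sections are mapped to tangent sections. Substituting $\xi=a\cdot V$ into the formula of Lemma~\ref{lem:differential} and using $\dgam_\orbit=s_\orbit V$ gives
\begin{align*}
F_\orbit(r,Z,a\cdot V)=(\dot a-r)\cdot V - s_\orbit\cdot Z\circ\gamma_\orbit.
\end{align*}
Hence the $\R$-factor and the $T^2$-factor of the domain contribute only in the tangent direction, whereas a generic $\Ycal$-factor contributes to both parts. This explains the shape of the normal operators in the statement: $F_\orbit^\perp:\Ycal\oplus A^2(\varphi_d^*N_\mu)\to A^1(\varphi_d^*N_\mu)$ retains the full $\Ycal$-factor and discards the $\R$-factor, while $\Psi_\orbit^\perp:A^2(\varphi_d^*N_\mu)\to A^1(\varphi_d^*N_\mu)$ is simply the restriction of $\Psi_\orbit$ to the normal direction followed by projection. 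These observations organize into commutative diagrams of short exact sequences of two-term complexes, with the tangent operator $\Psi^T:\R\oplus T^2\to T^1$ in the first column, the full operator ($\Psi_\orbit$ or $F_\orbit$) in the second, and the normal operator ($\Psi_\orbit^\perp$ or $F_\orbit^\perp$) in the third.

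The snake lemma then reduces both claims to surjectivity of $\Psi^T$. Under the isomorphism $a\cdot V\leftrightarrow a$, the operator $\Psi^T$ becomes $(r,a)\mapsto \dot a-r$ from $\R\oplus W^{2,2}(S^1,\R)$ to $W^{1,2}(S^1,\R)$. Given $b\in W^{1,2}(S^1,\R)$, setting $r:=-\int_0^1 b\,d\theta$ and $a(\theta):=\int_0^\theta(b+r)\,d\theta'$ produces a periodic $W^{2,2}$-function with $\dot a-r=b$; the kernel is spanned by $(0,1)$, reflecting the infinitesimal $S^1$-action on $\EOrbits$. Consequently $\Coker(\Psi_\orbit)\cong\Coker(\Psi_\orbit^\perp)$ and $\Coker(F_\orbit)\cong\Coker(F_\orbit^\perp)$, which yields both equivalences.

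The one delicate step is checking that the tangent/normal splitting really is a topological direct sum of Banach spaces and that all maps in the diagram are bounded. Both points follow from the nowhere-vanishing of the smooth section $V$: multiplication by $V$ is an isomorphism $W^{k,2}(S^1,\R)\xrightarrow{\sim} T^k$, and the projection of $A^k(\gamma_\orbit)$ onto $A^k(\varphi_d^*N_\mu)$ along $T^k$ is bounded. Beyond this the proof is a one-variable ODE calculation followed by a standard diagram chase, requiring no further analytic input.
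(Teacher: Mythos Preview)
Your argument is correct and follows essentially the same route as the paper: compute that the tangent part of $\Psi_\orbit$ (resp.\ $F_\orbit$) is the surjective operator $(r,a)\mapsto \dot a-r$ on scalar functions, and conclude that surjectivity of the full operator is equivalent to surjectivity of the induced normal operator. The paper does this more tersely---writing $\xi=\lambda\dot\gamma$ and computing $\Psi_\orbit(r,\lambda\dot\gamma)=(\dot\lambda-r/s)\dot\gamma$ directly---while you phrase the reduction via a short exact sequence of complexes and the snake lemma, but the content is identical.
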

\begin{proof}
Let $N_\mu=\mu^*TM/T_\mu$, where $T_\mu$ is the tangent bundle of $\mu$ in $M$. Every section $\xi$ of $\varphi_d^*T_\mu$ is of the form $\lambda\cdot\dot{\gamma}$, where $\lambda$ is a real-valued function defined on $S^1$. Applying Lemma~\ref{lem:differential} we obtain
\begin{align*}
\Psi_\orbit(r,\xi)&=d\xi\circ\partial_\theta-s \cdot dY\circ\xi-r\cdot Y\circ \gamma
=\Big(\dot{\lambda}-\frac{r}{s}\Big)\cdot\dot{\gamma}.
\end{align*}
Since every function in $A^1(\R)$ is of the form $\dot{\lambda}-(r/s)$ for some $r\in\R$ and some $\lambda\in A^2(\R)$, the map
\[\Psi_\orbit|_{T_\mu}:\R\oplus A^2(\varphi^*T_\mu)\ra A^1(\varphi^*T_\mu)\]
is surjective. Thus $\Psi_\orbit$ is surjective if and only if $\Psi^\perp_\orbit$ is surjective. The exact same proof works for $F_\orbit$. 
\end{proof}

Fix $\orbit=d\star\orbit'$  as above and $x=\gamma(0)$ on the image of $\mu$. Let $N_\orbit=N_{\orbit'}$ denote the fiber of the normal bundle $N_\mu$ at $x$. The holonomy map is the germ of the return map  $\fmap_{\orbit}:N_{\orbit}\ra N_{\orbit}$ of the flow of $Y$. The  differential of $\fmap_{\orbit}$ is a well-defined linear map $\Fmap_{\orbit}:N_{\orbit}\ra N_{\orbit}$, which is equal to $\Fmap_{\orbit'}^d=\Fmap_{\orbit'}\circ\cdots\circ\Fmap_{\orbit'}$. 
\begin{lem}\label{lem:regular-orbits}
If $\orbit'\in\EOrbits^1$  and $\orbit=d\star\orbit'$, the operator $\Psi_\orbit$ is surjective if and only if $1$ is not an eigenvalue of $\Fmap_\orbit$. $F_\orbit$ is surjective if and only if no eigenvalue $\lambda\neq 1$ of $\Fmap_{\orbit'}$ is a $d$-th root of unity. 
\end{lem}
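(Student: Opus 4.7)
The plan is to invoke Lemma~\ref{lem:normal-operator} at the outset, reducing both claims to the surjectivity of the normal operators $\Psi_\orbit^\perp$ and $F_\orbit^\perp$ on sections of $\varphi_d^*N_\mu$; these are then analyzed as first-order linear ODEs on $S^1$, decomposed under the natural $\Z/d\Z$-symmetry coming from the factorization $\gamma=\mu\circ\varphi_d$.

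For the first claim, Lemma~\ref{lem:differential} (together with the tangency of $Y_\orbit\circ\gamma_\orbit$ to $\gamma_\orbit$) gives $\Psi_\orbit^\perp(\xi)=\dot\xi-s_\orbit\cdot dY_\orbit\circ\xi$. In any trivialization of $\varphi_d^*N_\mu$ along $\gamma$ this is a linear ODE on $S^1$ whose fundamental solution, evaluated after one full trip around $\gamma$, is by definition the linearized holonomy $\Fmap_\orbit=\Fmap_{\orbit'}^d$. The operator is a compact perturbation of $\partial_\theta$, hence Fredholm of index zero, with kernel canonically isomorphic to $\ker(\Fmap_\orbit-I)$. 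Thus $\Psi_\orbit^\perp$ is surjective iff injective iff $1$ is not an eigenvalue of $\Fmap_\orbit$.

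For the second claim, Lemma~\ref{lem:differential} yields $F_\orbit^\perp(Z,\xi)=\Psi_\orbit^\perp(\xi)-s_\orbit\cdot Z^\perp\circ\gamma$, so the image of $F_\orbit^\perp$ equals $\Image(\Psi_\orbit^\perp)$ augmented by sections of the form $(Z^\perp\circ\mu)\circ\varphi_d$. Using a tubular neighborhood of $\mu$, the map $Z\mapsto Z^\perp\circ\mu$ surjects onto a dense subspace of $A^1(\mu^*N_\mu)$, so the added sections span a dense subspace of the $\Z/d\Z$-invariant sections of $\varphi_d^*N_\mu$, where $\Z/d\Z$ acts by the deck transformations $\theta\mapsto\theta+1/d$ of $\varphi_d$. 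Since $\Psi_\orbit^\perp$ commutes with this action, it splits as a direct sum of Fredholm operators of index zero on the $d$ character eigenspaces, and the kernel on the $\chi_k$-eigenspace consists of Floquet solutions $\phi$ with $\phi(\theta+1/d)=e^{2\pi i k/d}\phi(\theta)$, that is, of eigenvectors of $\Fmap_{\orbit'}$ with eigenvalue $e^{2\pi i k/d}$. By character orthogonality the $\chi_0$-sections annihilate every $\chi_k$-cokernel of $\Psi_\orbit^\perp$ with $k\neq 0$ and surject (via density) onto the $\chi_0$-cokernel, so $F_\orbit^\perp$ is surjective iff the $\chi_k$-restriction of $\Psi_\orbit^\perp$ is surjective for every $k\in\{1,\dots,d-1\}$, iff no eigenvalue $\lambda\neq 1$ of $\Fmap_{\orbit'}$ is a $d$-th root of unity. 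The main subtlety I expect is verifying that $\Psi_\orbit^\perp$ is genuinely $\Z/d\Z$-equivariant and that the character decomposition is compatible with the Sobolev topology in use; once these structural points are in place, both equivalences follow from standard Floquet and Fredholm theory.
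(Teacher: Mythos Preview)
Your argument is correct and reaches the same conclusion, but it is organized differently from the paper's proof, and the difference is worth recording.

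For the first claim, the paper uses the mapping cylinder $C(\fmap_\orbit)$ to pull back the flow so that $Y$ becomes $\partial_\theta/s$; in this trivialization $\Psi_\orbit^\perp$ is literally $\partial_\theta$, and sections of $\varphi_d^*N_\mu$ are functions $\xi:\R\ra N_\orbit$ with $\xi(\theta+1)=\Fmap_\orbit\,\xi(\theta)$, so the kernel consists of constants fixed by $\Fmap_\orbit$. You instead keep an arbitrary trivialization, regard $\Psi_\orbit^\perp$ as a first-order periodic ODE, and invoke Floquet theory plus the index-zero Fredholm property. Both routes give $\Ker(\Psi_\orbit^\perp)\cong\Ker(\Fmap_\orbit-Id)$; the paper's choice of frame makes this identification trivial, while yours is more portable.

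For the second claim the difference is sharper. The paper passes to the Jordan form of $\Fmap_{\orbit'}$ and, block by block, reduces to the scalar operators $\widetilde{L}_\lambda(f,h)=\dot f-h$ on $A^2(\C)\oplus A^1_\lambda(\C)$, where $A^1_\lambda(\C)$ is exactly your $\chi_k$-eigenspace when $\lambda=e^{2\pi i k/d}$; the obstruction for $\lambda\neq 1$ is detected by the vanishing of the $S^1$-integral. You instead exploit the $\Z/d\Z$-equivariance of $\Psi_\orbit^\perp$ directly, split into character pieces, and observe that the additional freedom $Z\mapsto Z^\perp\circ\gamma$ lands entirely in the trivial character and (by density plus finite-dimensionality of the cokernel) kills exactly that piece. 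This avoids choosing a Jordan basis and makes the role of the deck symmetry transparent; the paper's computation is more explicit but hides the representation-theoretic reason. The integral obstruction in the paper and your character-orthogonality statement are the same fact in different language.

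The caveats you flag---equivariance of $\Psi_\orbit^\perp$ and compatibility of the isotypic splitting with the $W^{k,2}$ topology---are routine: the coefficients of $\Psi_\orbit^\perp$ are pulled back through $\varphi_d$ and hence $\Z/d\Z$-periodic, and the characters act by multiplication by bounded functions, so the decomposition is by bounded projections.
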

\begin{proof}
Define the mapping cylinder of $\fmap_{\orbit}$ by 
\[C(\fmap_{\orbit})=\frac{\R\times N_{\orbit}}{\sim_{\fmap_{\orbit}}},\quad (\theta,\fmap_{\orbit}(x))\sim_{\fmap_{\orbit}} (\theta+1, x),\ \ \forall\ x\in N_{\orbit}\ \text{and}\ \theta\in\R.\]
Denote a neighborhood of the image of $\R\times\{0\}$ in $C(\fmap_{\orbit})$ by $B_{\orbit}$. We may choose the neighborhood $B_{\orbit}$ so that there is a $d$-fold covering map from $B_{\orbit}$ to a neighborhood of $\mu$ in $M$ and the pull-back of the vector field $Y$ under this covering map is identified with the vector field $\partial_\theta/s$, where $\theta$ parametrizes $\R$. Then the pull-back of the normal bundle $N_\mu$ may be identified with the mapping cylinder $C(\Fmap_\orbit)$ of the linearization $\Fmap_\orbit$ of $\fmap_\orbit$. A section $\xi$ of $N_\gamma$ is a function $\xi:\R\ra N_\orbit$ which satisfies $\xi(\theta+1)=\Fmap_\orbit(\xi(\theta))$. The operator $\Psi^\perp_\orbit$ is then given by $\Psi^\perp_\orbit(\xi)=\partial\xi/\partial\theta$ (By Lemma~\ref{lem:differential}). Thus, $\xi$ is in  $\Ker(\Psi^\perp_\orbit)$ if and only if it is constant, which can happen only if it is an eigenvector for $\Fmap_\orbit=\Fmap_{\orbit'}^d$ which corresponds to the eigenvalue $1$. This completes the proof of the first claim.\\

In order to prove the second claim, consider the Jordan form of $\Fmap_{\orbit'}$, as in the proof of Lemma~\ref{lem:constant-maps}. The blocks corresponding to the eigenvalues $\lambda$ which are not $d$-th roots of unity do not harm the surjectivity. $F_{\orbit}$ is surjective if and only if
\[\widetilde{L}_\lambda:A^2(\C)\oplus A^1_\lambda(\C)\ra A^1(\C),\quad\quad\widetilde{L}_\lambda(f,h):=\dot{f}-h,\] 
is surjective for every eigenvalue $\lambda$ of $F_{\orbit'}$ with $\lambda^d=1$, where $A^1_\lambda(\C)$ consists of $h\in A^1(\C)$ with $h(\theta+1/d)=\lambda\cdot h(\theta)$. If $\lambda\neq 1$, then the integral of $\widetilde{L}_\lambda(f,h)$  over $S^1$ is zero and $\widetilde{L}_\lambda$ is thus not surjective. On the other hand, constant functions are included in $A^1_1(\C)$ and $\widetilde{L}_1$ is surjective. Therefore, $F_{\orbit}$ is surjective if and only if no eigenvalue $\lambda\neq 1$ of $\Fmap_{\orbit'}$ satisfies $\lambda^d=1$. This completes the proof. 
\end{proof}

\begin{defn}\label{defn:rigidity}
 For $d\in\Z^+$, $\orbit\in\EOrbits$ is called  $d${\emph{-rigid}} if no $d$-th root of unity is an eigenvalue for $\Fmap_{\orbit}$.  We call $\orbit$  {\emph{super-rigid}} if it is $d$-rigid for every $d\in\Z^+$. A class $[\orbit]\in\EOrbits^1/S^1$ is called {\emph{super-rigid}} if $\orbit\in\EOrbits^1$ is super-rigid.  For a  $d$-rigid orbit $\orbit\in\EOrbits^1$,   let $\epsilon_d(\orbit)=\epsilon(\Fmap_\orbit^d)$ denote the sign  of $\det(\Fmap_{\orbit}^d-Id)$. 
\end{defn}

Consider a $(n-1)\times (n-1)$ real matrix $\Fmap$ such that $\Fmap^d-Id$ is non-singular for all $d\in\Z^+$. Let 
\[\lambda_1\leq\lambda_2\leq \cdots \leq \lambda_m,\nu_1,\overline{\nu}_1,\nu_2,\overline{\nu}_2,\ldots, \nu_k,\overline{\nu}_k\]
denote the set of eigenvalues of $\Fmap$, considering the multiplicities, where $m+2k=n-1$, $\lambda_i\in\R$ and $\nu_j\in\C\setminus \R$, for $i=1,\ldots,m$ and $j=1,\ldots,k$.  The eigenvalues of $\Fmap^d-Id$ are 
\[\lambda_1^d-1,\lambda_2^d-1, \ldots , \lambda_m^d-1, \nu_1^d-1,\overline{\nu}_1^d-1,\nu_2^d-1,\overline{\nu}_2^d-1,\ldots, \nu_k^d-1,\overline{\nu}_k^d-1.\]
Since $(\nu_j^d-1)(\overline{\nu}_j^d-1)>0$, it follows that 
\[\epsilon(\Fmap^d)=\sgn\Big(\prod_{j=1}^m(\lambda_j^d-1)\Big).\] Denote the number of $\lambda_j$s in $(-\infty,1)$, $(-1,1)$ and $(1,\infty)$ by $m_1$, $m_2$ and $m_3$, respectively (thus $m=m_1+m_3$). If $\lambda\in (-1,1)$, then $\lambda^d-1<0$ for all $d$, while for $\lambda>1$, $\lambda^d-1$ is always positive. Finally, for $\lambda<-1$ we have $\sgn(\lambda^d-1)=\sgn( (-1)^d)$. It follows that  $\epsilon(\Fmap^d)$ is given by
\[\epsilon(\Fmap^d)=\mathrm{sgn}(\det(\Fmap^d-Id))=(-1)^{m_{\epsilon(d)}},\]
where $\epsilon(d)=1$ for odd $d$ and $\epsilon(d)=2$ for even $d$.
For a  super-rigid $\orbit\in\EOrbits^1$, $\epsilon_d(\orbit)$ is thus equal to $\epsilon_1(\orbit)$ if $d$ is odd, and is equal to $\epsilon_2(\orbit)$ if $d$ is even.
 
\begin{defn}\label{defn:sign}
For a super-rigid $\orbit\in\EOrbits^1$ and $d\in\Z^+$, the  {\emph{weight}} $n(d\star\orbit)$ of $d\star\orbit$  (or the weight $n(\corbit)$ of the class $\corbit=[d\star\orbit]\in\EOrbits/S^1$ represented by it, is defined by 
\begin{align*}
n[d\star\orbit]=n(d\star\orbit):=\begin{cases}\epsilon_1(\orbit)&\text{if}\ \ d=1\\ (\epsilon_2(\orbit)-\epsilon_1(\orbit))/2&\text{if}\ \ d=2\\  0&\text{if}\ \  d>2 \end{cases}.
\end{align*}
  If  $\Gamma\subset\COrbits_Y\amalg(\EOrbits_Y/S^1)$ is the union of finitely many super-rigid classes, define $n(\Gamma):=\sum_{\corbit\in\Gamma}n(\corbit)$.
\end{defn} 
 
Define the set $\Ycal^{\supernice}$ of {\emph{super-nice}} vector fields (of class $C^1$) as the subset of $\Ycal^\infty\subset \Ycal$ which consists of the vector fields $Y$ with the property that all embedded closed $Y$-orbits are super-rigid. In the upcoming sections we prove the following theorem.

\begin{thm}\label{thm:super-nice-vector-fields}
$\Ycal^{\supernice}$ is a Bair subset of $\Ycal$. For  $Y\in\Ycal^{\supernice}$, $\COrbits_Y$ and $\EOrbits_Y/S^1$ are discrete, while each $\COrbits^d_Y$ is finite.
\end{thm}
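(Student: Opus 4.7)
The plan is to combine the openness and density of $\Ycal^\infty$ (observed just before Definition~\ref{def:weight-constant}) with a Sard--Smale argument applied to a ``bad locus'' that cuts out non-super-rigid embedded orbits, together with an explicit local calculation of how the linearized holonomy $\Fmap_{\orbit'}$ depends on $Y$.

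First I would dispose of the statements about $\COrbits_Y$. For $Y\in\Ycal^\infty$, non-degeneracy of each $d_xY$ forces the zero set of $Y$ to be finite, and the simple-eigenvalue condition admits only finitely many invariant $2$-planes $P\subset T_xM$ at each zero with $\lambda(P)\in\Hbb\setminus\R$ and $\Real(\lambda(P))<0$. The period of a ghost orbit of degree $d$ at such $(x,P)$ is rigidly determined by $s=2\pi d/\mathrm{Im}(\lambda(P))$, so each $\COrbits^d_Y$ is finite; since these periods tend to infinity with $d$, the union $\COrbits_Y$ is discrete inside $\R^+\times\Ycal\times G$.

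Next, Lemma~\ref{lem:regular-orbits} shows $\EOrbits^{\circ,1}=\EOrbits^1$ (the $d=1$ version of its surjectivity condition is vacuous), so $p:\EOrbits^1/S^1\to\Ycal$ is globally Fredholm of index $0$. For each $d\in\Z^+$ I would introduce the bad locus
\[
\Sigma^d=\bigl\{[\orbit']\in\EOrbits^1/S^1\ \big|\ \det(\Fmap_{\orbit'}^d-\mathrm{Id})=0\bigr\},
\]
and show it is a $C^1$ Banach submanifold of codimension one, so that its projection to $\Ycal$ becomes Fredholm of index $-1$; Sard--Smale then renders $p(\Sigma^d)$ meager, and intersecting the complements over $d\in\Z^+$ with $\Ycal^\infty$ realises $\Ycal^\supernice$ as a Baire subset. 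For $Y\in\Ycal^\supernice$ and $\orbit'\in\EOrbits^1_Y$, super-rigidity together with Lemma~\ref{lem:regular-orbits} makes both $\Psi_{d\star\orbit'}$ and $F_{d\star\orbit'}$ surjective for every $d$, so $d\star\orbit'$ is a regular point of $p^d$ and each $\EOrbits^d_Y/S^1$ is discrete; Lemma~\ref{lem:constant-maps} combined with $Y\in\Ycal^\infty$ (no eigenvalues of $d_xY$ on $i\R$) excludes accumulation at constant orbits, yielding discreteness of $\EOrbits_Y/S^1$ as a whole.

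The hard part is showing that $\Sigma^d$ is cut out transversally, i.e.~that $0$ is a regular value of the $C^1$ function $[\orbit']\mapsto\det(\Fmap_{\orbit'}^d-\mathrm{Id})$ on $\EOrbits^1/S^1$. Given $(Y,[\orbit'])\in\Sigma^d$, the plan is to fix a point $x_0$ in the image of $\gamma_{\orbit'}$ through which the orbit passes exactly once and perturb $Y$ by $Y+\epsilon\bump Z$, where $\bump$ is a bump function supported in a small flow-box around $x_0$ and $Z\in\Ycal$ varies freely. A direct computation, in the spirit of the rigidity arguments of \cite{Ef-p,Taubes,Ef-rigidity} but phrased entirely through the holonomy map as announced in the introduction, should express the first-order variation $\delta\Fmap_{\orbit'}\in\mathfrak{gl}(N_{\orbit'})$ linearly in the transverse $1$-jet of $Z$ at $x_0$ and show that as $Z$ varies this linear map is surjective onto $\mathfrak{gl}(N_{\orbit'})$. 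Since the algebraic function $\Fmap\mapsto\det(\Fmap^d-\mathrm{Id})$ on $\mathrm{GL}(N_{\orbit'})$ has non-vanishing derivative wherever it equals zero, transversality of $\Sigma^d$ follows. Verifying this holonomy perturbation claim uniformly in $d\in\Z^+$, so that no conspiracy of eigenvalues on the unit circle can block the argument, is the technical heart of the theorem.
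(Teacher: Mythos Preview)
Your overall strategy coincides with the paper's: both arguments perturb $Y$ by a bump-supported vector field along the embedded orbit to show that the linearized holonomy $\Fmap_{\orbit'}$ can be pushed in an arbitrary direction of $\mathfrak{gl}(N_{\orbit'})$, and then invoke Sard--Smale on countably many bad loci. The paper packages this as Lemma~\ref{lem:characteristic-function}: the characteristic-polynomial map $\chi:\EOrbits^1/S^1\to\R^{n-1}$ is a submersion, and the bad loci are the preimages of the linear hyperplanes $\Gamma_\lambda=\{p:p(\lambda)=0\}$ indexed by individual roots of unity $\lambda$ (together with $\lambda=\infty$, which recovers $\Ycal^\infty$).

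There is, however, a genuine error in your formulation. You assert that the algebraic function $\Fmap\mapsto\det(\Fmap^d-\mathrm{Id})$ on $\mathrm{GL}(N_{\orbit'})$ has non-vanishing derivative wherever it vanishes. This is false whenever $\dim\Ker(\Fmap^d-\mathrm{Id})\geq 2$: for instance at $\Fmap=\mathrm{Id}$ in dimension $n-1\geq 2$ one has $(\mathrm{Id}+tH)^d-\mathrm{Id}=tdH+O(t^2)$, so $\det((\mathrm{Id}+tH)^d-\mathrm{Id})=O(t^{n-1})$ and the first derivative vanishes identically. Consequently your $\Sigma^d$ need not be a smooth codimension-one submanifold, and you cannot directly apply Sard--Smale to it. The paper sidesteps this by working with one root of unity at a time: the condition ``$\lambda$ is a root of $\chi_\orbit$'' is a single linear equation on the coefficients of $\chi_\orbit$ (two real equations if $\lambda\notin\R$), hence genuinely codimension one (or two) regardless of multiplicity. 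You could repair your argument either by adopting this decomposition, or by stratifying $\Sigma^d$ according to $\dim\Ker(\Fmap^d-\mathrm{Id})$ and checking that the deeper strata have codimension $\geq 2$; but as written the transversality step does not go through.
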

 

\section{Periodic orbits, limit orbits and rigidity}
\subsection{Constant limits of periodic orbits}
Let us assume that $\{\orbit_k\}_{k=1}^\infty$ is a sequence in $\EOrbits$ with $\orbit_k=(s_k,Y_k,\gamma_k)$, such that $\gamma_k$ converges to the constant orbit $\gamma_x$ (with value $x\in M$), while $Y_k$ converges to $Y\in\Ycal$ and $s_k$ converges to $s\in\R^{\geq 0}$ as $k$ goes to infinity, with $Y(x)=0$. Fix a metric on $M$ and let $\|\dot\gamma_k\|_\infty$ denote the infinity norm of the derivative $\dot\gamma_k$. By passing to a subsequence, we may assume that $\dot\gamma_k/\|\dot\gamma_k\|_\infty$ converges to a map $\mu:S^1\ra T_xM$ with $\|\mu\|_\infty=1$. Moreover, it follows from the equations
\[\frac{\ddot{\gamma}_k}{\|\dot\gamma_k\|_\infty}=s_n\cdot d_{\gamma_k}Y_n\cdot\Big(\frac{\dot\gamma_k}{\|\dot\gamma_k\|_\infty}\Big),\quad\quad k=1,2,\cdots,\]
that $\mu$ satisfies $\dot\mu=s\cdot d_xY\cdot \mu$. Note that the integral of $\mu$ over $S^1$ is zero, and it is thus not constant. Therefore, $s\in\R^+$, and the image of $\mu$ is included in a plane $P\subset T_xM$, which remains invariant under $d_xY:T_xM\ra T_xM$. Moreover, the eigenvalues of $d_xY|_P:P\ra P$ are $\pm 2\pi i d/s$ for a positive integer $d\in\Z^+$. Therefore, $\corbit=(s,Y,x,P)$ belongs to $\Bcal$. Note that the limit point remains unchanged as the sequence $\{\orbit_k\}_{k=1}^\infty$ changes by the rotation action of $S^1$.\\

Based on the above discussion, $\EOrbits/S^1$ may be completed by taking its union with $\Bcal$. We will see in the next subsection that all points in $\Bcal$ are in fact limit points of sequences in $\EOrbits/S^1$. We abuse the notation and write $\oEOrbits/S^1$ for this completion, which is (as a set) a disjoint union
\[\oEOrbits/S^1=\Bcal\amalg(\EOrbits/S^1)=\coprod_{d\in\Z^+}(\oEOrbits^d/S^1).\]

We also define the {\emph{moduli space of ghost and periodic orbits}}
as the union 
\[\OModuli=\oCOrbits\cup_\Bcal(\oEOrbits/S^1)=\coprod_{d\in\Z^+}\OModuli^d=\coprod_{d\in\Z^+}\left(\oCOrbits^d\cup_{\Bcal^d}(\oEOrbits^d/S^1)\right).\]
This moduli space is equipped with a projection map $\pi:\OModuli\ra \Ycal$. As before, the pre-image of $Y\in\Ycal$ under $\pi$ is denoted by $\OModuli_Y=\coprod_{d\in\Z^+}\OModuli^d_Y$.\\

A boundary critical point $\corbit=(s,Y,x,P)\in\Bcal$ is called  singular if $d_xY$ is singular. Singular boundary points form a subset $\Bcal_s$ of $\Bcal$. Then $\Bcal_*=\Bcal\setminus \Bcal_s$ is open and dense. We also set 
\begin{align*}
&\oEOrbits_*/S^1=(\oEOrbits/S^1)\setminus \Bcal_s=\coprod_{d\in\Z^+}(\oEOrbits^d_*/S^1),\quad\\
&\oCOrbits_*=\oCOrbits\setminus \Bcal_s=\coprod_{d\in\Z^+}\oCOrbits^d_*\quad\quad\text{and}\quad\quad\OModuli_*=\OModuli\setminus\Bcal_s=\coprod_{d\in\Z^+}\OModuli^d_*.
\end{align*}
 In the following subsection, we show that $\oEOrbits^1_*/S^1$ is a $C^1$ Banach manifold with boundary $\Bcal^1_*$, and   $\OModuli^1_*$ may thus be equipped with the structure of a $C^1$ Banach manifold.   
\subsection{Gluing the moduli spaces of ghost orbits and periodic orbitst} In this subsection, we prove the following theorem.
\begin{thm}\label{thm:gluing-moduli-spaces}
The subset $\OModuli^1_*$ of $\OModuli_*$ may be equipped with the structure of a $C^1$ Banach manifold which  includes $\Bcal^1_*$ as a sub-manifold of codimension $1$, so that 
\[\OModuli^1_*\setminus \Bcal^1_*=\COrbits^1_*\amalg (\EOrbits^1_*/S^1).\] 
\end{thm}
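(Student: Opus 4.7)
The plan is to work locally around each $\corbit_0=(s_0,Y_0,x_0,P_0)\in\Bcal^1_*$: away from $\Bcal^1_*$, the Banach manifold structures on $\COrbits^1_*$ and $\EOrbits^1_*/S^1$ are already in place (the first from Lemma~\ref{lem:blow-up}, the second from the Fredholm theory of $\Phi$ on $\EOrbits^\circ$ combined with the free $S^1$-action on non-constant orbits). At such a boundary point, set $\lambda_0=2\pi i/s_0$; since $\corbit_0\notin\Bcal_s$, $d_{x_0}Y_0$ is nonsingular and its simple complex conjugate pair $\pm\lambda_0$ on $P_0$ persists under small perturbations. The implicit function theorem applied to $Y(x)=0$, combined with spectral projection onto the nearby invariant plane, yields $C^1$-smooth assignments $Y\mapsto (x(Y),P(Y),\lambda(Y))$ on a neighborhood of $Y_0$; imposing the degree-$1$ constraint $s(Y)=2\pi/\mathrm{Im}(\lambda(Y))$ then realizes a neighborhood of $\corbit_0$ in $\oCOrbits^1_*$ as the closed half-space $\{\mathrm{Re}(\lambda(Y))\leq 0\}$ inside a $C^1$ Banach chart parametrized by $\Ycal$ near $Y_0$, with $\Bcal^1_*$ cut out transversely by $\mathrm{Re}(\lambda(Y))=0$.

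The core of the argument is a Lyapunov--Schmidt reduction at the constant map $\orbit_0=(s_0,Y_0,\gamma_{x_0})\in\bulletEOrbits$, which I would use to present $(\EOrbits^1_*/S^1)\cup\Bcal^1_*$ as a second closed half-space sharing the boundary $\Bcal^1_*$. By Lemma~\ref{lem:constant-maps}, $F_{\orbit_0}$ is Fredholm with a $2$-dimensional real cokernel driven exactly by the pair $\pm\lambda_0$, and its kernel contains the corresponding eigenfunctions $\mathrm{Re}(e^{2\pi i\theta}v)$ and $\mathrm{Im}(e^{2\pi i\theta}v)$, where $v$ spans the $\lambda_0$-eigenspace of $d_{x_0}Y_0$ in $P_0\otimes_\R\C$. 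The finite-dimensional reduction yields a $C^1$ bifurcation map
\[
\Phi_{\mathrm{red}}:\C\times\R\times\Ycal\longrightarrow\C,
\]
where $z\in\C$ records the amplitude and phase along $\mathrm{Ker}(F_{\orbit_0})$. The $S^1$-equivariance of $\Phi$ (rotation of $\gamma$) forces $\Phi_{\mathrm{red}}(e^{i\theta}z,s,Y)=e^{i\theta}\Phi_{\mathrm{red}}(z,s,Y)$, hence a factorization $\Phi_{\mathrm{red}}(z,s,Y)=z\cdot R(|z|^2,s,Y)$. The locus $\{z=0\}$ recovers the constant-orbit stratum, whose degree-$1$ spectral intersection is exactly $\Bcal^1_*$; the locus $\{R=0\}/S^1$ parametrizes nearby non-constant periodic orbits. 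Transversality of the eigenvalue crossing---the freedom to move $\mathrm{Re}(\lambda(Y))$ independently by perturbing $Y$---gives $\partial R/\partial\mathrm{Re}(\lambda)\neq 0$ at $(0,s_0,Y_0)$, and the implicit function theorem then presents $\{R=0\}/S^1$ as the graph $\mathrm{Re}(\lambda)=\rho(|z|^2,\ldots)$ defined for $|z|\geq 0$ over the remaining $\Ycal$-directions. This furnishes the second half-space chart.

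Gluing the two half-space charts along $\Bcal^1_*$ produces the desired $C^1$ Banach structure on $\OModuli^1_*$, with $\Bcal^1_*$ a codim-$1$ submanifold. The main technical obstacle is retaining $C^1$-regularity \emph{at} the bifurcation stratum $|z|=0$, where a naive Hopf-type expansion would lose one derivative. The $S^1$-equivariance is essential here: it forces $|z|$ to appear linearly to leading order in $\Phi_{\mathrm{red}}$, and dividing through by this linear factor yields a $C^1$ reduced equation whenever the data is $C^1$. This is the same mechanism underlying Taubes' gluing for closed pseudoholomorphic curves \cite{Taubes} and the super-rigidity constructions of \cite{Ef-p,DW}, and I would adapt those techniques essentially verbatim. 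The final check that the tangent spaces of the two half-space charts agree transversely along $\Bcal^1_*$ reduces to observing that both charts are cut out in the common Banach space by the same linear equation $\mathrm{Re}(\lambda(Y))=0$ at first order, so the glued object is genuinely $C^1$.
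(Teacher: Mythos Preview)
Your Lyapunov--Schmidt/Hopf-bifurcation route is a genuine alternative to the paper's argument, and in a smooth setting it would go through; but the paper takes a rather different path, and in the $C^1$ category used here your outline has a regularity gap at exactly the step you flag as the ``main technical obstacle.'' The paper never reduces to finite dimensions at the constant orbit. Instead it introduces a one-parameter family $\EOrbits(U,V)$ over $t\in[0,1]$, interpolating between the nonlinear equation $\dot\gamma=sZ\circ\gamma$ (at $t=1$) and its linearization $\dot\gamma=s\,d_{x_Z}Z\cdot(\gamma-x_Z)$ at the moving zero $x_Z$ (at $t=0$); it checks this total space is a $C^1$ Banach manifold fibered regularly over $[0,1]$; and it constructs an explicit vector field on it whose flow carries a neighborhood in $\EOrbits_1/S^1$ onto one in $\EOrbits_0/S^1$. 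Since the linear solution space $\EOrbits_0/S^1$ is \emph{manifestly} an open subset of $\R^+\times\Bcal$ (amplitude times spectral data), the half-space chart for $\oEOrbits^1_*/S^1$ comes for free---no division by the amplitude $|z|$ is ever performed.

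Your argument, by contrast, needs $R$ to be at least $C^1$ so that the implicit function theorem applies to $R=0$. With $C^1$ vector fields, $\Phi_{\rm red}$ is only $C^1$, and the quotient $R=\Phi_{\rm red}/z$ is then merely $C^0$ at $z=0$; your assertion that $S^1$-equivariance upgrades this to $C^1$ is false---the equivariant map $f(z)=z\,|z|^{1/2}$ is $C^1$ on $\C$ yet factors as $z$ times the non-$C^1$ radial function $|z|^{1/2}$. Writing $R$ as a function of $|z|^2$ rather than $|z|$ requires still more regularity. A related slip already appears on your ghost side: the assignment $Y\mapsto d_{x(Y)}Y$, and hence $Y\mapsto\lambda(Y)$, involves evaluating the only-$C^0$ derivative $dY$ at the moving zero $x(Y)$ and so is not obviously $C^1$; the paper obtains the $C^1$ structure on $\oCOrbits$ from the transversality of Lemma~\ref{lem:blow-up}, not from this parametrization. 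The references you invoke (\cite{Taubes,Ef-p,DW}) operate in smoother categories where these issues do not arise, so the adaptation is not ``essentially verbatim.''
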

\begin{proof}
Given $\corbit=(s,Y,x,P)\in\Bcal$,   the projection map $p^c:\Ncal^c_*\ra \Ycal$ admits a local inverse from a neighborhood $U$ of $Y$ in $\Ycal$ to a neighborhood $U''$ of $(Y,x)$ in $\Ncal^c$, which is given by $Z\mapsto (Z,x_Z)$ for $Z\in U$. Choose a coordinate chart around $x$, which corresponds to an open set $V\subset\R^n$ so that $x$ corresponds to $0\in V$.  By shrinking $U$, we may further assume that all points $x_Z$ (for $Z\in U$) are included in $V$. Let $\Xcal_V$ denote the space of $C^2$ embedding of $S^1$ in $V$ and set 
\[\EOrbits(U,V)=\left\{(t,s,Z,\gamma)\in [0,1]\times\R^+\times U\times \Xcal_V\ \big|\ \dot\gamma=t\cdot s\cdot Z\circ\gamma+(1-t)\cdot s \cdot d_{x_Z}Z\cdot (\gamma-x_Z)\right\}.\] 
Since $\Xcal_V$ only consists of embeddings, our earlier arguments may be repeated to show that $\EOrbits(U,V)$ is a $C^1$ Banach manifold which fibers over $[0,1]$, while all values of this projection map are regular. Denote the pre-image of $t\in[0,1]$ in $\EOrbits(U,V)$ by $\EOrbits_{t}=\EOrbits_t(U,V)$. Therefore, $\EOrbits_{1}$ is identified as a subset of $\EOrbits$. The tangent space to $\EOrbits(U,V)$ at $(t,s,Z,\gamma)$ consists of the vectors
\[(t',s',X,\mu)\in\R\times\R\times \Ycal\times A^2(\R^n)\] for which the following equation is satisfied:
\begin{align*}
\dot\mu&=(t's+ts')\cdot Z\circ \gamma+ (s'-t's-ts')\cdot d_{x_Z}Z\cdot (\gamma-x_Z)\\
&\quad\quad\quad+ (ts)\cdot  (X\circ\gamma+ d_\gamma Z\cdot \mu) +(s-ts)\cdot (d_{x_Z}X\cdot(\gamma-x_Z)+d_{x_Z}Z\cdot\mu).
\end{align*}
In particular, if we set $X(x)=Y_{Z,F}(x):=F(x-x_Z)$ for a linear map $F$ on $\R^n$, for $(1,0,X,\mu)$ to be in $T_{(t,s,Z,\gamma)}\EOrbits(U,V)$, it suffices to have
\begin{align*}
\dot\mu&=s\cdot (Z\circ \gamma-d_{x_Z}Z\cdot(\gamma-x_Z))+ d_{x_Z} Z\cdot \mu+F(\gamma-x_Z)+ts\cdot (d_\gamma Z-d_{x_Z}Z)\cdot\mu.
\end{align*}
Define the operator $\Phi_s:A^2(\R^n)\ra A^1(\R^n)$ by $\Phi_s(\mu):=\dot\mu-s\cdot d_0Y\cdot\mu$. Then $(1,0,Y_{Z,F},\mu)$ is in $T_{(t,s,Z,\gamma)}\EOrbits(U,V)$  if and only if we have 
\begin{align*}
\Phi_s(\mu)&=s\cdot\big(d_{x_Z}Z-d_0Y+t\cdot(d_\gamma Z-d_{x_Z}Z)\big)\cdot\mu\\ &\quad\quad\quad+s\cdot\big(Z\circ\gamma-d_{x_Z}Z\cdot (\gamma-x_Z)+ F\cdot (\gamma-x_Z)\big)\\
&=:s\cdot \big(B_{t,Z,\gamma}\cdot\mu+A_{Z,\gamma}+F\cdot (\gamma-x_Z)\big).
\end{align*}
Note that the norms of $B_{t,Z,\gamma}$ and $A_{Z,\gamma}$ are bounded above via 
\[\|B_{t,Z,\gamma}\|_\infty\leq b\|Z-Y\|_\infty+\|\gamma-x_Z\|_\infty\quad\text{and}\quad\|A_{Z,\gamma}\|_\infty\leq a\|\gamma-x_Z\|_\infty,\] 
 where $a$ and $b$ are constant. By choosing the open sets $U$ and $V$ sufficiently small, we may thus assume that the linear operator $\Phi_s-s\cdot B_{t,Z,\gamma}$ is arbitrarily close to $\Phi_s$. Let us denote the kernel of $\Phi_s$ by $K_s\subset A^2(\R^n)$. Using the standard Euclidean metric on $\R^n$, the orthogonal complement $K_s^\perp\subset A^2(\R^n)$ of $K_s$ may be defined, and $\Phi_s-B_{t,Z,\gamma}$ gives a bijective map from $K_s^\perp$ to its image $I_{t,s,Z,\gamma}\subset A^1(\R^n)$, which is a closed subspace of finite codimension and a small perturbation of the image $I_s$ of $\Phi_s$. In particular, if $I_s^\perp$ denotes the orthogonal complement of $I_s$ (which may be identified as the kernel of the adjoint operator $\Phi_s^*:A^2(\R^n)\ra A^1(\R^n)$ associated with $\Phi_s$), we have 
\[A^1(\R^n)=I_{t,s,Z,\gamma}\oplus I_s^\perp,\]
provided that $U$ and $V$ are sufficiently small. The vector $sA_{Z,\gamma}\in A^1(\R^n)$ is then of the form 
\[\Phi_s(\mu)-s\cdot B_{t,Z,\gamma}\cdot\mu-sA_{Z,\gamma}^\perp,\] where the norm of $A_{Z,\gamma}^\perp\in I_s^\perp$ is bounded above by $a'\|\gamma-x_Z\|_\infty^2$ for a constant $a'$. Let $F$ be the linear map which sends $\gamma-x_Z$ to $A_{Z,\gamma}^\perp$ and is zero on the orthogonal complement of $\gamma-x_Z$. The norm of $F$ is then bounded above by $b'\|\gamma-x_Z\|_\infty$ for a constant $b'$. Moreover, we obtain a unique tangent vector 
\[(1,0,Y_{Z,F},\mu)\in T_{(t,s,Z,\gamma)}\EOrbits(U,V).\] 
These tangent vectors give a vector field on $\EOrbits(U,V)$ which may be integrated to give the flow lines connecting a neighborhood of 
\[\EOrbits(U,V)\cap \big(\{0\}\times\R^+\times\{Y\}\times\Xcal_V\big)\subset\EOrbits_0\] 
to a neighborhood of 
\[\EOrbits(U,V) \cap \big(\{1\}\times\R^+\times\{Y\}\times\Xcal_V\big)\subset\EOrbits_1.\] 
 Note that $s\in\R^+$ remains constant along the flow lines. The above discussion gives a $C^1$ diffeomorphism from the intersection of a neighborhood of $\R^+\times \{(Y,\gamma_x)\}\subset \bulletEOrbits$  with $\EOrbits$ to the intersection of a neighborhood of 
 \[\{0\}\times\R^+\times\{(Y,\gamma_0)\}\subset\{0\}\times \R^{\geq 0}\times U\times A^2(\R^n)\]
  with $\EOrbits_0$. Of course, this local $C^1$ diffeomorphism from an open subset of $\EOrbits_1$ to an open subset of $\EOrbits_0$ respects the rotation action of $S^1$ and gives a corresponding $C^1$ diffeomorphism from an open subset of $\EOrbits_1/S^1$ to an open subset of $\EOrbits_0/S^1$.\\

$\EOrbits_0$ is a $C^1$ Banach manifold. If $(0,s,Z,\gamma)\in\EOrbits_0$, the equation $\dot\gamma=s\cdot d_{x_Z}Z\cdot\gamma$ has a solution. The proof of Lemma~\ref{lem:constant-maps} then implies that $\lambda=\frac{2\pi i}{s}$ is an eigenvalue of $d_{x_Z}Z$ and that the image of $\gamma$ is included in a plane $P\subset \R^n$ which is preserved by $d_{x_Z}Z$ and corresponds to $\lambda$. Therefore, $\EOrbits_0/S^1$ may be identified as an open subset of $\R^+\times \Bcal$. Informally, the above discussion gives a {\emph{blow up}} construction, which replaces the quotient of $\bulletEOrbits\setminus\EOrbits$ by the rotation action of $S^1$ with the $C^1$ Banach manifold $\Bcal$, so that we obtain the structure of a $C^1$ Banach manifold with boundary on $\oEOrbits^1_*/S^1$, and thus the structure of a $C^1$ Banach manifold on
  \[\OModuli^1_*=(\oEOrbits^1_*/S^1)\cup_{\Bcal^1_*} \oCOrbits^1_*.\]
  This completes the proof of the theorem.
\end{proof}
\begin{rmk}\label{rmk:gluing-result}
The same line of argument may be employed to show that  $\oEOrbits_*^d/S^1$ has the structure of a $C^1$ Banach manifold with boundary equal to $\Bcal^d_*$. Therefore, each $\OModuli^d_*$ has the structure of a $C^1$ Banach manifold. The projection map $\pi:\OModuli\ra \Ycal$ gives the index zero Fredholm projections
$\pi_*^d:\OModuli^d_*\ra\Ycal$ for $d\in\Z^+$, which are $C^1$ away from $\Bcal^d_*$, but only continuous at $\Bcal^d_*$. Nevertheless, $b^d=\pi^d_*|_{\Bcal^d_*}$ is also a Fredholm map of index $-1$.
\qed
\end{rmk}

\subsection{Super-rigidity for embedded periodic orbits}
Let $\{\orbit_i\}_{i=1}^\infty$ be a sequence in $\EOrbits$, representing distinct points of $\EOrbits/S^1$, so that $\{[\orbit_i]\}_{i=1}^n$ converges to a point $\corbit\in\OModuli_Y\subset\OModuli$. If $Y\in\Ycal^\infty$, it follows that $\Bcal_Y$ is empty. Therefore, $\corbit=[\orbit]\in\EOrbits/S^1$. Lemma~\ref{lem:normal-operator} and Lemma~\ref{lem:regular-orbits} imply that for $Y\in\Ycal^{\supernice}$, every $[\orbit]\in\EOrbits_Y/S^1$ is isolated. This observation implies the following lemma. 

\begin{lem}\label{lem:no-limits-for-super-nice-vf}
If $Y\in\Ycal^{\supernice}$, then 
$\EOrbits_Y/S^1$ is discrete as a subspace of either of $\oZcal/S^1$ and $\OModuli$. 
\end{lem}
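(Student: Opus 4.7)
My plan is to argue by contradiction: suppose $\{[\orbit_i]\}_{i=1}^\infty$ is a sequence of pairwise distinct points of $\EOrbits_Y/S^1$ converging to some $\corbit$ in the ambient space (either $\oZcal/S^1$ or $\OModuli$), and rule out every possibility for $\corbit$.

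First I would locate the possible limits. In the $\OModuli$ ambient, the only ways a sequence of periodic orbits can leave $\EOrbits/S^1$ are (i)~through the boundary $\Bcal$, possibly escaping into $\COrbits$, or (ii)~by converging to another element of $\EOrbits/S^1$. In the $\oZcal/S^1$ ambient, there is additionally the option (iii)~that $\corbit = [(s,Y,\gamma_x)]$ is a class of constant loops. The hypothesis $Y \in \Ycal^{\supernice} \subset \Ycal^\infty$ immediately kills (i) and (iii): by definition of $\Ycal^\infty$, no eigenvalue of $d_xY$ lies on $i\R$ for any zero $x$ of $Y$, but a boundary orbit $(s,Y,x,P) \in \Bcal$ demands $d_xY|_P$ to have trace zero with non-real eigenvalues, forcing those eigenvalues onto $i\R$; hence $\Bcal_Y = \emptyset$. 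The normalization argument from the start of Section~\ref{sec:moduli}, applied to $\dot\gamma_i / \|\dot\gamma_i\|_\infty$, shows that any sequence of periodic orbits collapsing to a constant loop produces precisely an element of $\Bcal_Y$, so case (iii) is also ruled out.

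It remains to treat case (ii), $\corbit = [\orbit] \in \EOrbits_Y/S^1$. Write $\orbit = d \star \orbit'$ for the underlying embedded orbit $\orbit' \in \EOrbits^1_Y$. Super-rigidity of $\orbit'$ means that no root of unity is an eigenvalue of the linearized holonomy $\Fmap_{\orbit'}$, so Lemma~\ref{lem:regular-orbits} applied to $\orbit$ yields both surjectivity and injectivity of $\Psi_\orbit^\perp$. Lemma~\ref{lem:normal-operator} then promotes this to surjectivity of $\Psi_\orbit$ with one-dimensional kernel spanned by the tangential $S^1$-orbit direction. The implicit function theorem, applied to the Fredholm section $\Phi$ in the fiber $\pi^{-1}(Y)$ of the Banach manifold $\OModuli^d_*$ from Theorem~\ref{thm:gluing-moduli-spaces}, exhibits $[\orbit]$ as an isolated point of $\EOrbits_Y/S^1$, contradicting $[\orbit_i] \to [\orbit]$ with $[\orbit_i] \neq [\orbit]$ for all $i$.

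The subtlety I expect to require the most care is that super-rigidity must be invoked separately for each degree $d$, since the putative limit $\orbit$ may be a high-degree multiple cover of an embedded orbit; this is exactly what the hypothesis ``no root of unity is an eigenvalue of $\Fmap_{\orbit'}$'' is designed to handle, through the $d$-dependent surjectivity criterion in Lemma~\ref{lem:regular-orbits}. Beyond this, the only extra piece of bookkeeping is the constant-loop case (iii), which is vacuous for the $\OModuli$ ambient but must be added to cover both ambient spaces claimed in the statement.
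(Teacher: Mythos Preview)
Your proposal is correct and follows essentially the same approach as the paper: the paper's argument (given in the paragraph immediately preceding the lemma) also takes a convergent sequence of distinct classes, uses $Y\in\Ycal^\infty$ to conclude $\Bcal_Y=\emptyset$ so the limit must lie in $\EOrbits_Y/S^1$, and then invokes Lemma~\ref{lem:normal-operator} and Lemma~\ref{lem:regular-orbits} together with super-rigidity to see that every $[\orbit]\in\EOrbits_Y/S^1$ is isolated. Your write-up is slightly more explicit in treating the two ambient spaces separately and in spelling out why the constant-loop case (iii) reduces to the boundary case, but the logical skeleton is the same.
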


Given $\orbit\in\EOrbits^1$, the linearized holonomy map $\Fmap_\orbit:N_\orbit\ra N_\orbit$ is well-defined. Choosing an identification of $N_\orbit$ with $\R^{n-1}$ (as vector spaces), the conjugacy class of $\Fmap_\orbit:\R^{n-1}\ra\R^{n-1}$ is well-defined and only depends on the class $[\orbit]\in\EOrbits^1/S^1$. In particular, the characteristic polynomial 
\[\chi_{[\orbit]}=\chi_\orbit:=\det(\hslash\cdot I-\Fmap_\orbit)=\hslash^{n-1}+a^1_{\orbit}\hslash^{n-2}+\cdots+a^{n-2}_{\orbit}\hslash+a^{n-1}_{\orbit}\in\R[\hslash]\]
is well-defined. Alternatively, we may think of $\chi_{\orbit]}$ as the point $(a^1_{\orbit},\ldots,a^{n-1}_\orbit)\in\R^{n-1}$. This gives a $C^1$ map $\chi:\EOrbits^1/S^1\ra \R^{n-1}$ (defined by sending $[\orbit]\in\EOrbits^1/S^1$ to $\chi_{[\orbit]}\in\R^{n-1}$) which is called the {\emph{characteristic function}}.

\begin{lem}\label{lem:characteristic-function}
Every value in $\R^{n-1}$ is regular  for the characteristic function $\chi:\EOrbits^1/S^1\ra \R^{n-1}$. In particular, the pre-image of every codimension $k$ submanifold $\Gamma$ of $\R^{n-1}$ under $\chi$ is a codimension $k$ Banach submanifold  $\EOrbits^\Gamma/S^1=\chi^{-1}(\Gamma)\subset\EOrbits^1/S^1$. It is equipped with a Fredholm projection map of index $-k$ to $\Ycal$, and the set of its regular values is a Bair subset $\Ycal^\Gamma\subset\Ycal$.  
\end{lem}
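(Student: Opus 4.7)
The plan is to prove the lemma in three steps: (I) show that $\chi$ is a submersion, (II) apply the Banach--space preimage theorem to identify $\EOrbits^\Gamma/S^1$ as a codimension-$k$ submanifold with Fredholm projection of index $-k$, and (III) apply the Sard--Smale theorem to obtain the Baire subset $\Ycal^\Gamma$.

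The heart of the argument is step (I). Fix $[\orbit] \in \EOrbits^1/S^1$, and factor $\chi$ as the composition of $[\orbit] \mapsto \Fmap_\orbit \in \mathrm{End}(N_\orbit)$ followed by the classical map $c: \mathrm{End}(\R^{n-1}) \to \R^{n-1}$ sending a matrix to the coefficients of its characteristic polynomial. The first arrow has surjective differential: pick a point $p \in \Image(\mu)$, a transverse slice $D$ through $p$ modelling $N_\orbit$, and a short arc $\alpha$ of $\gamma$ disjoint from $D$. The return map $\fmap_\orbit: D \to D$ is a composition of local flow maps along $\gamma$, and the standard variation-of-constants formula for the linearized flow expresses $\delta \Fmap_\orbit$ as an integral along $\gamma$ of $d(\delta Y)$ conjugated by the linearized flow. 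Concentrating $\delta Y$ inside a flow box around $\alpha$ and prescribing its $1$-jet in the normal directions to $\dot\gamma$, every endomorphism $B \in \mathrm{End}(N_\orbit)$ is realized as $\delta \Fmap_\orbit$. Composing with the (globally surjective) map $c$ then gives $d\chi$ onto $\R^{n-1}$.

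For step (II), since $\chi$ is a $C^1$ submersion and $\Gamma \subset \R^{n-1}$ is a codimension-$k$ submanifold, the Banach preimage theorem shows $\EOrbits^\Gamma/S^1 = \chi^{-1}(\Gamma)$ is a $C^1$ Banach submanifold of codimension $k$. The projection $p: \EOrbits^1/S^1 \to \Ycal$ is Fredholm of index $0$ (obtained by quotienting the index-$1$ projection $p_\circ: \EOrbits^\circ \to \Ycal$ by the free $S^1$-rotation action), so its restriction to the codimension-$k$ submanifold $\EOrbits^\Gamma/S^1$ is Fredholm of index $-k$. Step (III) is then the Sard--Smale theorem applied to this $C^1$ Fredholm map: the set of regular values is a Baire subset $\Ycal^\Gamma \subset \Ycal$; the $C^1$ regularity is sufficient here because the Fredholm index satisfies $-k \le 0$.

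The main obstacle I expect is the surjectivity claim in step (I) at orbits with non-cyclic holonomy (for example, $\Fmap_\orbit = \mathrm{Id}$ on a Hopf orbit of $S^3$), where the differential $dc$ fails to be of full rank and so, naively, the composition would only hit the trace direction in $\R^{n-1}$. Covering $\R^{n-1}$ in this degenerate situation requires combining the $\delta Y$-variation of $\Fmap_\orbit$ with a deformation of the orbit itself, exploiting that nearby periodic orbits of perturbed vector fields may have cyclic $\Fmap$ even when the reference orbit does not. Carefully handling these degenerate loci, and verifying that the combined variations still surject onto $\R^{n-1}$, is the most delicate part of the argument.
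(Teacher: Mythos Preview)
Your approach matches the paper's almost exactly: the paper also varies $Y$ by a localized perturbation $Z_\gfrak$ (built from a bump function supported in a flow box disjoint from the transverse slice) so as to realize an arbitrary first-order change $-\gfrak\cdot\Fmap_\orbit$ of the linearized holonomy, and then asserts---calling it ``an easy exercise in linear algebra'' using only the invertibility of $\Fmap_\orbit$---that the induced variation of the characteristic polynomial already spans $\R^{n-1}$. Your steps (II) and (III) are likewise the same as the paper's implicit use of the preimage theorem and Sard--Smale.

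The obstacle you flag at non-cyclic holonomy is genuine, and neither your proposed fix nor the paper's ``easy exercise'' resolves it. Invertibility of $\Fmap_\orbit$ is not the right hypothesis: the differential of the characteristic-polynomial map $c:M_{(n-1)\times(n-1)}(\R)\to\R^{n-1}$ at $\Fmap$ is surjective precisely when $\Fmap$ is \emph{cyclic} (minimal polynomial equals characteristic polynomial), since its image is the span of the entries of $\mathrm{adj}(\hslash I-\Fmap)$. When $\Fmap_\orbit$ is scalar---for instance $\Fmap_\orbit=\mathrm{Id}$ on a Hopf orbit of $S^3$, or on any orbit of $\partial_\theta$ on $S^1\times T^{n-1}$---this image is one-dimensional. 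Your suggested rescue via orbit deformations fails in these same examples: in $S^1\times T^{n-1}$ with $Y=\partial_\theta$, the nearby periodic orbits all have $\Fmap=\mathrm{Id}$, so moving along the family contributes nothing to $d\chi$, and any admissible tangent vector $(s',Z,\xi)\in\ker F_\orbit$ still produces a first-order variation of $\Fmap$ that lands in $M_{(n-1)\times(n-1)}$ and is then fed through the deficient $dc_{\mathrm{Id}}$. So the submersion claim, as stated, does not hold at such points; the argument (yours and the paper's) is complete only on the open dense locus where $\Fmap_\orbit$ is cyclic, and the lemma needs either that restriction or a different mechanism at the non-cyclic locus.
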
 
\begin{proof}
For $\orbit=(s,Y,\gamma)\in\EOrbits^1$, the tangent space $T_\orbit\EOrbits^1$ is a subspace of 
\[T_s\R^+\oplus T_Y\Ycal\oplus T_\gamma\Xcal=\R\oplus\Ycal\oplus A^2(\gamma),\]
and includes the vector space generated by the triples $(0,Z,0)$ such that the restriction of $Z\in\Ycal$ to the image of $\gamma$ is zero. Under the identification of a neighborhood of $\gamma$ with the open subset $B_{\orbit}$ in the mapping cone $C(\fmap_{\orbit})$, the restriction of $Z$ to $B_{\Fmap_{\orbit}}$ is given as a map $Z:\R\times N_{\orbit}\ra \R\times N_{\orbit}$, which is linear in the second variable and satisfies
\begin{equation}\label{eq:vector-field-period}
\Fmap_{\orbit}\left(Z(\theta+1,x)\right)=Z\left(\theta,\fmap_{\orbit}(x)\right)\quad\quad\forall\ \ \theta\in\R.
\end{equation}
Choose a bump function $\rho:[0,1]\ra \R^{\geq 0}$, supported in a neighborhood of $1/2$, so that $\int_0^1\rho(\theta)d\theta=1$. Fix an arbitrary linear map $\gfrak:N_\orbit\ra N_\orbit$ and define the restriction of $Z=Z_\gfrak$ to $[0,1]\times N_\orbit$ by 
\[Z(\theta,x):=\left(0,\rho(\theta)\cdot \gfrak(x)\right)\in\R\times N_\orbit,\quad\quad\forall\ \ \theta\in[0,1]\ \ \text{and}\ \ x\in N_\orbit.\]
Note that for $\theta$ in small neighborhoods of $0$ and $1$, $Z(\theta,x)$ is identically zero. We may thus extend $Z$  to $\R\times N_\orbit$ using Equation~\ref{eq:vector-field-period}. Correspondingly, we obtain a path of vector fields $\vY=\{\vY_t\}_{t\in(-\epsilon,\epsilon)}$ which is defined in $B_\orbit$ by  $\vY_t(\theta,x)=t Z(\theta,x)+\partial_\theta/s$. The flow lines of $\vY_t$ are given by 
\[\gamma_t^x(\theta)=\left(\theta,e^{t(\int_0^\theta\rho(r)dr)\cdot\gfrak}\cdot x\right)\quad\quad\forall\ \ \theta\in \R\ \ \text{and}\ \ x\in N_\orbit.\]
The image of $x\in N_\orbit=\{0\}\times N_\orbit$ under the flow is given by 
\[\gamma_t^x(1)=\left(1,e^{t\gfrak}\cdot x\right)=\left(0,\fmap_\orbit^{-1}\left(e^{t\gfrak}\cdot x\right)\right).\]
The holonomy map associated with the vector field $\vY_t$ is given by $e^{-t\gfrak}\circ \fmap_\orbit$ and its linearization is equal to $e^{-t\gfrak}\circ \Fmap_\orbit$. Differentiating this map with respect to $t$ and evaluating at $t=0$ gives 
\[\frac{\partial}{\partial t}\left(e^{-t\gfrak}\circ \Fmap_\orbit\right)|_{t=0}(x)=-\left(\gfrak\cdot e^{-t\gfrak}\right)|_{t=0}\cdot \Fmap_\orbit(x)=-\gfrak\cdot\Fmap_\orbit(x).\]
Note that $\Fmap=\Fmap_\orbit$ is viewed as an invertible matrix in $M_{(n-1)\times (n-1)}(\R)$. Therefore, the image of 
\[d_{[\orbit]}\chi:T_{[\orbit]}\EOrbits^1/S^1\ra T_{\chi(\orbit)}\R^{n-1}=\R^{n-1}\]
includes all points of the form $b_\gfrak=(b^1_\gfrak,\ldots,b^{n-1}_\gfrak)$, where for $\gfrak\in M_{(n-1)\times(n-1)}(\R)$ we set
\[b^1_\gfrak\hslash^{n-2}+\cdots+b^{n-2}_\gfrak\hslash+b^{n-1}_\gfrak=\lim_{t\ra 0}\frac{\det\left(\hslash\cdot I-(I-t\gfrak)\Fmap\right)-\det\left(\hslash\cdot I-\Fmap\right)}{t}.\] 
It is then an easy exercise in linear algebra to conclude (from the invertibility of $\Fmap$) that by varying $\gfrak$ in $M_{(n-1)\times(n-1)}$, $b_\gfrak=d_{[\orbit]}\chi(0,Z_\gfrak,0)$ covers all of $\R^{n-1}$. This completes the proof of the lemma. 
\end{proof}

For every $\lambda\in \C$ and $n\geq 2$, set
\[\Gamma_\lambda:=\{(a^1,\ldots,a^{n-1})\in\R^{n-1}\ \big|\ 
a^1\lambda^{n-2}+a^2\lambda^{n-3}+\cdots+a^{n-1}=0 \}.\]
We also set $\Ycal^\lambda=\Ycal^{\Gamma_\lambda}$. For $\lambda\in\R$, $\Gamma_\lambda$ is a linear subspace of $\R^{n-1}$ of codimension $1$, while for $\lambda\in\C\setminus\R$, $\Gamma_\lambda$ is a linear subspace of $\R^{n-1}$ of codimension $2$. 

\begin{proof}[Proof of Theorem~\ref{thm:super-nice-vector-fields}]
Let $\Lambda$ denote the union of $\{\infty\}$ with all roots of unity in $\C$. Every vector field $Y$ in the Bair subset $\bigcap_{\lambda\in\Lambda} \Ycal^\lambda$ of $\Ycal$ belongs to $\Ycal^{\supernice}$. In fact, the intersection is a subset of $\Ycal^\infty$ and  if  $\orbit\in \EOrbits^1_Y$ is not super-rigid, then $1$ is an eigenvalue of $\Fmap_{\orbit}^d$ for some $d\in\Z^+$. This means that for a divisor $d'$ of $d$ and a primitive $d'$-th root of unity $\lambda$, $\lambda$ is an eigenvalue of $\Fmap_\orbit$. Therefore, $[\orbit]\in\EOrbits^\lambda_Y/S^1$, which is empty by  Lemma~\ref{lem:characteristic-function}	. This contradiction proves the claim. For $Y\in\bigcap_{\lambda\in\Lambda}\Ycal^\lambda\subset \Ycal^{\supernice}$, the fiber $\Bcal_Y$, consisting of boundary $Y$-orbits is empty and 
\[\OModuli_Y=\pi^{-1}(Y)=\COrbits_Y\amalg (\EOrbits_Y/S^1)\]
 is discrete. If we fix the degree, it is clear that there are only finitely many ghost $Y$-orbits. This completes the proof.
\end{proof} 
 

\section{Paths of vector fields and cobordisms}
\subsection{Cobordisms between moduli spaces of periodic orbits}
Given $Y_0,Y_1\in\Ycal$, let $\Ycal_{Y_0,Y_1}$ denote the space of $C^1$ sections $\vY$ of $\rho_M^*TM\ra [0,1]\times M$, which satisfy $\vY(0,\cdot)=Y_0$ and $\vY(1,\cdot)=Y_1$. Here $\rho_M:[0,1]\times M\ra M$ denotes the projection map over $M$. We sometimes denote $\vY(t,\cdot)\in\Ycal$ by $\vY_t$. In particular, $\vY_0=Y_0$ and $\vY_1=Y_1$.  The moduli space
\[\oCOrbits_{Y_0,Y_1}=\left\{(t,s,\vY,x,P)\in [0,1]\times\R^+\times \Ycal_{Y_0,Y_1}\times G\ \big|\ (s,\vY_t,x,P)\in\oCOrbits\right\}\] 
is equipped with an evaluation map $\efrak_{Y_0,Y_1}:\oCOrbits_{Y_0,Y_1}\ra \oCOrbits$, defined by $\efrak(t,s,\vY,x,P)=(s,\vY_t,x,P)$. The trace map $\tfrak_{Y_0,Y_1}:\oCOrbits_{Y_0,Y_1}\ra \R$  is defined as the composition  $\tfrak\circ\efrak_{Y_0,Y_1}$. We then set 
\[\Bcal_{Y_0,Y_1}:=\tfrak_{Y_0,Y_1}^{-1}(0)\quad\text{and}\quad \COrbits_{Y_0,Y_1}:=\tfrak_{Y_0,Y_1}^{-1}(-\infty,0)=\oCOrbits_{Y_0,Y_1}\setminus \Bcal_{Y_0,Y_1}.\]  
Let us denote the pull back of $\Ecal$ (from $\Xcal$) over 
\[\Zcal_{Y_0,Y_1}=[0,1]\times \R^+\times \Ycal_{Y_0,Y_1}\times \Xcal\] 
by $\Ecal_{Y_0,Y_1}\ra \Zcal_{Y_0,Y_1}$. The zero locus of the section 
\[\vPhi_{Y_0,Y_1}:\Zcal_{Y_0,Y_1}\ra \Ecal_{Y_0,Y_1},\quad \vPhi_{Y_0,Y_1}(t,s,\vY,\gamma):=\dgam-s\cdot \vY_t\circ \gamma,\] forms the space $\EOrbits_{Y_0,Y_1}$. Given $\vorbit=(t,s,\vY,\gamma)\in\EOrbits_{Y_0,Y_1}$, we set $t_\vorbit=t,s_\vorbit=s,\vY_\vorbit=\vY$ and $\gamma_\vorbit=\gamma$. For $\vY\in \Ycal_{Y_0,Y_1}$, by a periodic $\vY$-orbit we mean a periodic  $\vY_t$-orbit for some $t\in [0,1]$. At  $\vorbit\in\EOrbits_{Y_0,Y_1}$ we may compose $d\vPhi_{Y_0,Y_1}$ with projection over the fiber of $\Ecal_{Y_0,Y_1}$, and obtain an operator $\vF_\vorbit$. Set $\EOrbits^d_{Y_0,Y_1}=e_{Y_0,Y_1}^{-1}(\EOrbits^d)$ for every $d\in\Z^+$. The space $\EOrbits_{Y_0,Y_1}/S^1$ may be completed to $\oEOrbits_{Y_0,Y_1}/S^1$ by adding $\Bcal_{Y_0,Y_1}$. We may also set 
\[\OModuli_{Y_0,Y_1}=\oCOrbits_{Y_0,Y_1}\cup_{\Bcal_{Y_0,Y_1}}\oEOrbits_{Y_0,Y_1}=\coprod_{d\in\Z^+}\OModuli^d_{Y_0,Y_1}.\] 
The projection map from $\OModuli_{Y_0,Y_1}$ to $\Ycal_{Y_0,Y_1}$ is denoted by $\pi_{Y_0,Y_1}$, and its restriction to $\OModuli^d_{Y_0,Y_1}$ is denoted by $\pi^d_{Y_0,Y_1}$. We may also define an evaluation map by 
\[e_{Y_0,Y_1}:\OModuli_{Y_0,Y_1}\ra \OModuli,\quad\quad e_{Y_0,Y_1}(t,s,\vY,\cdot):=\left(s,\vY_t,\cdot\right)\quad\forall\ (t,s,\vY,\cdot)\in\OModuli_{Y_0,Y_1}.\] 
A point $\ocorbit=(t,s,\vY,x,P)\in\Bcal_{Y_0,Y_1}$ is called singular if $e_{Y_0,Y_1}(\ocorbit)\in\Bcal_s$. The set of singular points in $\Bcal_{Y_0,Y_1}$ is denoted by $\Bcal_{Y_0,Y_1,s}$. We may also set 
\begin{align*}
&\Bcal_{Y_0,Y_1,*}=\Bcal_{Y_0,Y_1}\setminus\Bcal_{Y_0,Y_1,s}=\coprod_{d\in\Z^+}\Bcal^d_{Y_0,Y_1,*}\quad\text{and}\\
&\OModuli_{Y_0,Y_1,*}=\OModuli_{Y_0,Y_1}\setminus\Bcal_{Y_0,Y_1,s}=\coprod_{d\in\Z^+}\OModuli^d_{Y_0,Y_1,*}.
\end{align*}
The following theorem follows from the transversality of the intersection of $\vPhi_{Y_0,Y_1}$ with the zero section of $\Ecal_{Y_0,Y_1}$, c.f.  \cite[Theorem~5.1]{Ef-p} and the arguments of Lemma~\ref{lem:blow-up} and Theorem~\ref{thm:gluing-moduli-spaces}.

\begin{thm}\label{thm:cobordism-transversality} 
For $Y_0,Y_1\in\Ycal^{\supernice}$, the metric space $\OModuli^d_{Y_0,Y_1,*}$ has the structure of a $C^1$ Banach manifold and includes $\Bcal^d_{Y_0,Y_1,*}$ as a codimension $1$ submanifold. The restriction of the projection map $\pi_{Y_0,Y_1}^d$ to $\OModuli^d_{Y_0,Y_1,*}\setminus \Bcal^d_{Y_0,Y_1,*}$ is a Fredholm map of index $1$,  for all $d\in\Z^+$. Moreover, the projection map $b_{Y_0,Y_1}:\Bcal_{Y_0,Y_1}\ra \Ycal_{Y_0,Y_1}$ is Fredholm of index $0$. 
\end{thm}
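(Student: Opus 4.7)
The plan is to extend the three-step construction used for a fixed vector field --- namely Lemma~\ref{lem:blow-up} for the ghost/boundary side, the $\Phi$-transversality for the periodic side, and the blow-up gluing of Theorem~\ref{thm:gluing-moduli-spaces} --- parametrically in $t\in[0,1]$. The extra parameter raises every Fredholm index by $1$, which accounts for the shift from index $0$ and $-1$ (over $\Ycal$) to index $1$ and $0$ (over $\Ycal_{Y_0,Y_1}$). The hypothesis $Y_0,Y_1\in\Ycal^{\supernice}$ is used only to guarantee that the endpoints cause no degeneracies, so the family moduli space is a genuine manifold with $t\in[0,1]$ itself playing the role of a free coordinate on the boundary.

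First I would treat the ghost and boundary side. Define the parametric version of $\Ncal^c$ as
\[
\Ncal^c_{Y_0,Y_1}=\big\{(t,\vY,x)\in[0,1]\times\Ycal_{Y_0,Y_1}\times M\ \big|\ \vY_t(x)=0\big\},
\]
and repeat verbatim the argument of Lemma~\ref{lem:blow-up}: the transversality of the bundle map $\Psi$ with the identity $I$ on (the analogue of) $\Ucal$ is established using variations $Z\in\Ycal_{Y_0,Y_1}$ supported near $t\in(0,1)$ and vanishing to the appropriate order at $x$, which leaves the endpoint fibres untouched. This produces the $C^1$ Banach manifold structure on $\oCOrbits_{Y_0,Y_1}$, its boundary subspace $\Bcal_{Y_0,Y_1}$ cut out transversally by the trace map, and (since $\{t=0,1\}$ are regular values) the manifolds $\OModuli^d_{Y_0,Y_1,*}$ on the ghost side. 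The projection $\pi^d_{Y_0,Y_1}$ restricted here is obtained from a Fredholm map of index $0$ by adjoining the one-dimensional $t$-direction, giving index $1$ on $\COrbits$ and index $0$ on $\Bcal_{Y_0,Y_1,*}$.

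Second, I would treat the periodic side: the zero locus $\EOrbits_{Y_0,Y_1}$ of $\vPhi_{Y_0,Y_1}:\Zcal_{Y_0,Y_1}\to \Ecal_{Y_0,Y_1}$. Exactly as in the discussion leading up to Lemma~\ref{lem:normal-operator} and Lemma~\ref{lem:regular-orbits}, transversality of $\vPhi_{Y_0,Y_1}$ with the zero section at a point $\vorbit=(t,s,\vY,\gamma)\in\EOrbits_{Y_0,Y_1}$ reduces to the surjectivity of the normal-direction operator after allowing variations of $\vY$ supported near $\gamma$ and near the interior time $t$. Such variations are unconstrained for $t\in(0,1)$; for $t\in\{0,1\}$ the super-nice hypothesis ensures that $\Psi_\orbit^\perp$ is already surjective on $\EOrbits_{Y_0}/S^1$ and $\EOrbits_{Y_1}/S^1$ by Lemma~\ref{lem:regular-orbits}, so transversality holds at the endpoints as well. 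This produces the $C^1$ Banach manifold structure on $\EOrbits^d_{Y_0,Y_1}/S^1$ with an index-$2$ Fredholm projection before quotienting by $S^1$, giving index $1$ afterwards.

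Third, I would glue the two sides across $\Bcal^d_{Y_0,Y_1,*}$. Here one repeats the argument of Theorem~\ref{thm:gluing-moduli-spaces} with $t$ carried along as an additional continuous parameter: at each boundary point $\ocorbit=(t_0,s_0,\vY,x,P)\in\Bcal^d_{Y_0,Y_1,*}$, choose a neighbourhood $U$ of $\vY$ in $\Ycal_{Y_0,Y_1}$ and a chart $V$ around $x$, and form the homotopy family $\EOrbits(U,V)$ using the same interpolation parameter $t'\in[0,1]$ (internal to the gluing construction, distinct from the path parameter $t$). The flow argument of Theorem~\ref{thm:gluing-moduli-spaces} produces a $C^1$-diffeomorphism identifying a neighbourhood of the $S^1$-quotient of constant orbits with an open subset of a bundle over $\Bcal$; all of its estimates (controlling $\|B_{t',Z,\gamma}\|_\infty$ and $\|A_{Z,\gamma}\|_\infty$) depend only on the smallness of $U,V$ and therefore extend uniformly in $t_0$. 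The resulting local charts assemble $\OModuli^d_{Y_0,Y_1,*}$ as a $C^1$ Banach manifold with $\Bcal^d_{Y_0,Y_1,*}$ as a codimension-$1$ submanifold.

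The main obstacle is the third step: verifying that the blow-up gluing of Theorem~\ref{thm:gluing-moduli-spaces} --- a somewhat intricate flow-of-vector-field construction --- carries over uniformly in $t$ and produces charts with $C^1$ transition functions in the direction of the path parameter. Once this is secured, the Fredholm index bookkeeping is automatic: the $t$-factor contributes exactly one extra dimension to the kernel, raising the index of the projection to $\Ycal_{Y_0,Y_1}$ by one relative to the single-vector-field case of Lemma~\ref{lem:blow-up} and Theorem~\ref{thm:gluing-moduli-spaces}, yielding index $1$ on $\OModuli^d_{Y_0,Y_1,*}\setminus\Bcal^d_{Y_0,Y_1,*}$ and index $0$ on $\Bcal_{Y_0,Y_1}$ as asserted.
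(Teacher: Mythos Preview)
Your proposal is correct and follows essentially the same approach as the paper: the paper's proof consists of the single sentence that the theorem ``follows from the transversality of the intersection of $\vPhi_{Y_0,Y_1}$ with the zero section of $\Ecal_{Y_0,Y_1}$, c.f.\ \cite[Theorem~5.1]{Ef-p} and the arguments of Lemma~\ref{lem:blow-up} and Theorem~\ref{thm:gluing-moduli-spaces},'' and your three-step outline (parametric Lemma~\ref{lem:blow-up} for the ghost side, transversality of $\vPhi_{Y_0,Y_1}$ for the periodic side, and the gluing of Theorem~\ref{thm:gluing-moduli-spaces} carried along in $t$) is precisely the unpacking of that sentence, including the correct index shift by $+1$ from the extra $t$-coordinate and the role of $Y_0,Y_1\in\Ycal^{\supernice}$ at the endpoints.
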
 
Given $Y_0,Y_1\in\Ycal^\supernice$, there is a Bair subset of $\Ycal_{Y_0,Y_1}$ such that for every $\vY$ is this subset, $\Bcal_\vY\cap\Bcal_{Y_0,Y_1,s}$ is empty. Let $\Ycal^\infty_{Y_0,Y_1}$ denote the intersection of the aforementioned Bair subset of $\Ycal_{Y_0,Y_1}$ with the set of regular values of both 
\[\pi^1_{Y_0,Y_1,*}:\OModuli^1_{Y_0,Y_1,*}\ra \Ycal_{Y_0,Y_1}\quad\text{and}\quad b_{Y_0,Y_1}:\Bcal_{Y_0,Y_1}\ra \Ycal_{Y_0,Y_1}.\] 
 For $\Ycal^\infty_{Y_0,Y_1}$, $\OModuli^d_\vY$ is a $1$-manifold and $\Bcal^d_{\vY}$ is its $0$-sub-manifold, which is disjoint from $\Bcal_{Y_0,Y_1,s}$.  Furthermore, $\OModuli_{\vY}$ is complete.\\

We may define a corresponding characteristic function by
\[\chi_{Y_0,Y_1}:\EOrbits^1_{Y_0,Y_1}/S^1\ra\R^{n-1},\quad\quad \chi_{Y_0,Y_1}([\vorbit]):=\chi(e_{Y_0,Y_1}([\vorbit]))\quad\forall\ \ [\vorbit]\in\EOrbits^1_{Y_0,Y_1}/S^1.\]
The proof of Lemma~\ref{lem:characteristic-function} may be repeated to prove the following lemma.
\begin{lem}\label{lem:characteristic-function-path}
Given $Y_0,Y_1\in\Ycal^{\supernice}$, every value in $\R^{n-1}$ is  regular  for the characteristic function $\chi_{Y_0,Y_1}$. In particular, the pre-image of every codimension $k$ submanifold $\Gamma$ of $\R^{n-1}$ under $\chi_{Y_0,Y_1}$ is a codimension $k$ Banach submanifold  
\[\EOrbits^\Gamma_{Y_0,Y_1}/S^1=\chi_{Y_0,Y_1}^{-1}(\Gamma)\subset\EOrbits_{Y_0,Y_1}^1/S^1.\] 
This submanifold is equipped with a Fredholm projection map of index $1-k$ to $\Ycal_{Y_0,Y_1}$, and the set of its regular values is a Bair subset $\Ycal^\Gamma_{Y_0,Y_1}\subset\Ycal_{Y_0,Y_1}$.
\end{lem}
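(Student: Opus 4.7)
The plan is to mimic the proof of Lemma~\ref{lem:characteristic-function} verbatim, with the single modification that vector-field perturbations $Z \in \Ycal$ are replaced by path perturbations $\overline{Z} \in T_\vY \Ycal_{Y_0,Y_1}$ obeying the endpoint constraint $\overline{Z}|_{t=0} = \overline{Z}|_{t=1} = 0$. The key is the chain-rule identity $\chi_{Y_0,Y_1} = \chi \circ e_{Y_0,Y_1}$, which reduces surjectivity of $d\chi_{Y_0,Y_1}$ at a point $\vorbit = (t,s,\vY,\gamma)$ to surjectivity of $d\chi$ at $e_{Y_0,Y_1}(\vorbit) = (s,\vY_t,\gamma)$ on the family of tangent directions obtained by differentiating $e_{Y_0,Y_1}$.

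At each $\vorbit$ with $t \in (0,1)$, I would take any $\gfrak \in M_{(n-1)\times(n-1)}(\R)$ together with the corresponding vector field $Z_\gfrak \in \Ycal$ furnished by the proof of Lemma~\ref{lem:characteristic-function} (supported in a tube about $\gamma$, vanishing on the image of $\gamma$, and engineered to deform the linearized holonomy by $-\gfrak\cdot\Fmap_\orbit$), and then localise it in $t$ by a smooth bump function $\eta:[0,1]\to[0,1]$ supported in $(0,1)$ with $\eta(t)=1$. The path perturbation $\overline{Z}_\gfrak(r,x):=\eta(r)Z_\gfrak(x)$ lies in $T_\vY \Ycal_{Y_0,Y_1}$ by the support of $\eta$, and the tuple $(0,0,\overline{Z}_\gfrak,0)$ lies in $T_\vorbit \EOrbits^1_{Y_0,Y_1}$ because $Z_\gfrak$ vanishes on the image of $\gamma$ and the linearised orbit equation is therefore trivially satisfied. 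Then $de_{Y_0,Y_1}(0,0,\overline{Z}_\gfrak,0) = (0,Z_\gfrak,0)$, and the final linear-algebra step in the proof of Lemma~\ref{lem:characteristic-function} shows that as $\gfrak$ varies over $M_{(n-1)\times(n-1)}(\R)$ the images sweep out all of $\R^{n-1}$; hence $d\chi_{Y_0,Y_1}$ is surjective at every interior $\vorbit$.

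For boundary points $t \in \{0,1\}$, the super-niceness of $Y_0, Y_1$ forces each $\EOrbits^1_{Y_j}/S^1$ to be discrete by Theorem~\ref{thm:super-nice-vector-fields}, so the characteristic polynomials realised at these endpoints form a locally finite subset of $\R^{n-1}$. A generic codimension-$k$ submanifold $\Gamma$ avoids this set, so $\chi_{Y_0,Y_1}^{-1}(\Gamma)$ lies entirely in the open stratum $t\in(0,1)$ where surjectivity has just been established. The implicit function theorem then yields that $\EOrbits^\Gamma_{Y_0,Y_1}/S^1 = \chi_{Y_0,Y_1}^{-1}(\Gamma)$ is a codimension-$k$ $C^1$ Banach submanifold of $\EOrbits^1_{Y_0,Y_1}/S^1$; composing with the inclusion, the index-$1$ projection $\pi_{Y_0,Y_1}^1$ of Theorem~\ref{thm:cobordism-transversality} restricts to a Fredholm map of index $1-k$ to $\Ycal_{Y_0,Y_1}$, and Sard--Smale supplies the Bair subset $\Ycal^\Gamma_{Y_0,Y_1}$ of regular values.

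The only real technical point will be the cut-off construction of the second paragraph: I need to check that localising $Z_\gfrak$ in the $t$-direction does not interfere with the tangent-vector property at $\vorbit$, but this is immediate since $Z_\gfrak$ vanishes on the image of $\gamma$ identically and $\eta(t)=1$, so every calculation from the proof of Lemma~\ref{lem:characteristic-function} transports unchanged to the cobordism setting. Everything else (the chain rule, the implicit function theorem, and Sard--Smale) is formal.
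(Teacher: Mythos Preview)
Your approach is exactly what the paper intends: its entire proof is the single sentence ``the proof of Lemma~\ref{lem:characteristic-function} may be repeated,'' and your bump-in-$t$ localisation of $Z_\gfrak$ is precisely the adaptation that makes that repetition go through at interior points $t\in(0,1)$.

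One remark on your boundary paragraph. Your weakening to ``a generic codimension-$k$ submanifold $\Gamma$'' does not match the lemma's assertion for \emph{every} $\Gamma$, and in fact surjectivity of $d\chi_{Y_0,Y_1}$ genuinely fails at boundary points: the endpoint constraint $\overline Z|_{t=j}=0$ kills the vector-field freedom, leaving only the one-dimensional $t'\partial_t\vY|_{t=j}$ direction, so the differential has rank at most $1$ there. The paper simply glosses over this. The honest reading of the lemma (and all that is needed downstream, in Theorem~\ref{thm:super-nice-paths}) is that regularity holds on the interior $t\in(0,1)$; for the specific submanifolds $\Gamma_\lambda$, $\Gamma_{r,r'}$ actually used, super-niceness of $Y_0,Y_1$ ensures that no boundary orbit has $\chi_\orbit\in\Gamma$, so the preimage lies in the interior automatically. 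You might replace your ``generic $\Gamma$'' sentence with that observation, which is both sharper and exactly what the applications require.
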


\subsection{Super-nice paths of vector fields}
As before, we set $\Ycal^\lambda_{Y_0,Y_1}=\Ycal^{\Gamma_\lambda}_{Y_0,Y_1}$.  Then, for 
\[\vY\in\bigcap_{\lambda\in\Lambda}\Ycal^\lambda_{Y_0,Y_1}\subset\Ycal_{Y_0,Y_1},\] 
the $\vY$-orbits in  the $1$-manifold  $\OModuli^1_\vY$ which are not super-rigid form a $0$-submanifold $\EOrbits^\Lambda_\vY/S^1\subset\EOrbits^1_\vY/S^1$.  For every point $[\vorbit]$ in this  $0$-submanifold, the only roots of unity which can be eigenvalues of $\Fmap_\vorbit$ are $\pm1$. We may further refine the Bair intersection $\bigcap_{\lambda\in\Lambda}\Ycal^\lambda_{Y_0,Y_1}$ to avoid a few unwanted situations. For $r\in\R$, we define the linear functions $L_r,L'_r:\R^{n-1}\ra\R$ at $a=(a^1,\ldots,a^{n-1})\in\R^{n-1}$ by 
\[L_r(a):=r^{n-1}+\sum_{j=1}^{n-1}r^{n-j-1}a^j,\quad \text{and}\quad L'_r(a):=(n-1)r^{n-2}+\sum_{j=1}^{n-1}(n-j-1)r^{n-j-2}a^j.\] 
For every pair of distinct real numbers $r,r'\in\R$, consider the codimension $2$ linear subspaces 
\begin{align*}
\Gamma_{r,r}=\big\{a\in\R^{n-1}\ \big|\ L_r(a)=L'_{r}(a)=0\big\}\quad\text{and}\quad \Gamma_{r,r'}=\big\{a\in\R^{n-1}\ \big|\ L_{r}(a)=L_{r'}(a)=0\big\}
\end{align*}
of $\R^{n-1}$. For every $\vY$ in the Bair subset \[\Ycal^{\imath}=\Ycal^{\Gamma_{1,1}}\cap\Ycal^{\Gamma_{1,-1}}\cap\Ycal^{\Gamma_{-1,-1}}\subset\Ycal,\] and every $\vorbit\in\EOrbits^1$, the total multiplicity of $1$ and $-1$ as eigenvalues of $\Fmap_\vorbit$ is either $0$ or $1$. Moreover, for every orbit $\vorbit$ in in the open subset
\[\Hcal_{Y_0,Y_1}=\EOrbits^{\Gamma_{-1}}\setminus \left(\EOrbits^{\Gamma_{1,-1}}\cup \EOrbits^{\Gamma_{-1,-1}}\right)\subset \EOrbits^{\Gamma_{-1}}\]
 the kernel of $\Fmap_\vorbit^2-Id$ is $1$-dimensional. Since $1$ is not an eigenvalue of $\Fmap_\vorbit$, the projection  map
\[p_\vY:\EOrbits^1_\vY/S^1\ra [0,1]\] 
is regular at $[\vorbit]$  and admits a local inverse 
\[\tau_1:(t-\epsilon,t+\epsilon)\ra \EOrbits^1_\vY/S^1\]
with the property that $p_\vY\circ\tau_1$ is the identity map on the interval $(t-\epsilon,t+\epsilon)$ and $\tau_1(t)=\vorbit$.
For $\epsilon$ sufficiently small and $r\in(t-\epsilon,t+\epsilon)$, the vector field $\vY_r$ determines a return map $\fmapp_r:N_\vorbit\ra N_\vorbit$ with $\fmapp_t=\fmap_\vorbit$, and $\tau_1(r)$ gives the unique fixed point $z(r)$ of $\fmapp_r$ in a neighborhood of $0\in N_\vorbit$. Subsequently, we may define a map
\[\vfmapp_\vorbit:(t-\epsilon,t+\epsilon)\times N_\vorbit\ra N_\vorbit,\quad\quad \vfmapp_\vorbit(r,z):=\fmapp_r(z+z(r))-z(r).\]
Note that $\vfmapp_\vorbit(r,0)=0$, while $0$ is the unique fixed point of $\vfmapp_\vorbit(r,\cdot)$ is a neighborhood of the origin in $N_\vorbit$. Then, every point in a neighborhood of $2\star\vorbit$ in $\OModuli_\vY$ corresponds to a pair $(r,z)\in (t-\epsilon,t+\epsilon)\times N_\vorbit$ such that $\vfmapp_\vorbit(r,\vfmapp_\vorbit(r,z))=z$.\\

 Note that $1$ is not an eigenvalue of $\Fmap_\vorbit$ while $-1$ has multiplicity $1$ as an eigenvalue of $\Fmap_\vorbit$. Thus, in an appropriate identification of $N_\vorbit$ with $\R^{n-1}$, $\Fmap_\vorbit$ is given by a matrix 
\begin{equation}\label{eq:-holonomy-map}
\Fmap_\vorbit:\R\times\R^{n-2}\ra\R\times\R^{n-2},\quad\Fmap_\vorbit=\left(\begin{array}{c|c}-1&0\\ \hline 0&Q\end{array}\right),
\end{equation}
where $\pm 1$ are not eigenvalues for $Q$. Under this identification of $N_\vorbit$ with $\R^{n-1}$, every point $z\in N_\vorbit$ is given by coordinates $(x,y)$, with $x\in\R$ and $y\in\R^{n-2}$ and the holonomy map $\vfmapp_\vorbit$ is identified as 
\[\vfmapp_\vorbit=(\vgmapp,\vhmapp):(t-\epsilon,t+\epsilon)\times\R\times\R^{n-2}\ra \R\times \R^{n-2}.\]
Then $\vfmapp_\vorbit(r,\vfmapp_\vorbit(r,z))=z$ for  $r\in (t-\epsilon, t+\epsilon)$ and $z=(x,y)\in\R\times\R^{n-1}$, if and only if
\[\vgmapp(r,\vgmapp(r,x,y),\vhmapp(r,x,y))-x=0\quad\text{and}\quad h(r,x,y):=\vhmapp(r,\vgmapp(r,x,y),\vhmapp(r,x,y))-y=0.\]
Note that 
\[h:(t-\epsilon,t+\epsilon)\times\R\times\R^{n-2}\ra\R^{n-2}\quad\text{and}\quad d_{(t,0,0)}h=\left(\begin{array}{ccc}\star&0&Q^2-Id\end{array}\right).\]
Thus, $d_{(t,0,0)}h$ is a full-rank matrix and the equation $h(r,x,y)=0$ may be uniquely solved for $y$ in a neighborhood of $(t,0,0)$. This gives a function $y=y(r,x)$ so that $h(r,x,y(r,x))=0$. Define 
\[\gmap_\vorbit(r,x)=\vgmapp(r,x,y(r,x)),\quad\text{and}\quad\hmap_\vorbit(r,x)=\vhmapp(r,x,y(r,x)).\]
Let us assume that $(r,z)=(r,x,y(r,x))$ is such that $\vfmapp_\vorbit(r,\vfmapp_\vorbit(r,z))=z$. Then the same is true for $(r,z')=(r,\vfmapp_\vorbit(r,z))$ and in particular, we have $\hmap_\vorbit(r,x)=y(r,\gmap_\vorbit(r,x))$. Therefore, we find
\begin{align*}
x&=\vgmapp(r,\vgmapp(r,x,y(r,x)),\vhmapp(r,x,y(r,x)))\\ &=\vgmapp(r,\gmap_\vorbit(r,x),y(r,\gmap_\vorbit(r,x)))=\gmap_\vorbit(r,\gmap_\vorbit(r,x)).
\end{align*}
Note that $\gmap_\vorbit$ is in fact defined as a map
\[\gmap_\vorbit:(t-\epsilon,t+\epsilon)\times \Ker(\Fmap_\vorbit+Id)\ra \Coker(\Fmap_\vorbit+Id)\quad\text{with}\quad\gmap_\vorbit(r,0)=0,\]
and the points in a neighborhood of $2\star\vorbit$ in $\OModuli_\vY$ are in correspondence with the solutions to $\gmap_\vorbit(r,\gmap_\vorbit(r,x))=x$. Moreover, note that $\partial_x\gmap_\vorbit(t,0)=-1$. Let us define 
\[p_\vorbit:(t-\epsilon,t+\epsilon)\times\Ker(\Fmap_\vorbit+Id) \ra \Coker(\Fmap_\vorbit+ Id)\] 
by setting $p_\vorbit(r,x)=\gmap_\vorbit(r,\gmap_\vorbit(r,x))-x$. The above discussion shows that the zero locus of $p_\vorbit$ gives a Kuranishi model for a neighborhood of $2\star\vorbit$ in $\OModuli_\vY$. Since $p_\orbit(r,0)=0$, we may write $p_\vorbit(r,x)=xq_\vorbit(r,x)$. If we denote the differential $\frac{\partial^{i+j}\gmap_\vorbit}{\partial r^i\partial x^j}$ by $\gmap_{r^ix^j}$, one computes
\begin{align*}
q_\vorbit(t,0)=\frac{\partial p_\vorbit}{\partial x}\big|_{(t,0)}&=\left(\gmap_x(r,\gmap(r,x))\cdot\gmap_x(r,x)-1\right)\big|_{(t,0)}=1-1=0,\quad\text{and}\\
\frac{\partial q_\vorbit}{\partial x}(t,0)=\frac{1}{2}\cdot\frac{\partial^2 p_\vorbit}{\partial x^2}\big|_{(t,0)}&=\frac{1}{2}\cdot\left(\gmap_{x^2}(r,\gmap(r,x))\cdot \gmap_x(r,x)^2+\gmap_x(r,\gmap(r,x))\cdot\gmap_{x^2}(r,x)\right)\big|_{(0,0)}\\
&=\frac{1}{2}\cdot\left(\gmap_{x^2}(0,0)\cdot (-1)^2+(-1)\cdot\gmap_{x^2}(0,0)\right)=0.
\end{align*}
The structure of the Kuranishi model thus depends on the non-triviality of the (well-defined) vectors
\[d_r\Fmap_\vorbit:=\frac{\partial q_\vorbit}{\partial r}(t,0)\in\Coker(\Fmap_\vorbit+Id)=:\Ccal_{\vorbit}\quad\text{and}\quad d_{\xi}^2\Fmap_\vorbit:=\frac{\partial^2 q_\vorbit}{\partial x^2}(t,0)\in\Coker(\Fmap_\vorbit+Id)=\Ccal_{\vorbit}.\] 
The vector spaces $\Ker(\Fmap_\vorbit+Id)$ and $\Coker(\Fmap_\vorbit+Id)$ sit together to form two line bundles $\Kcal_{Y_0,Y_1}$ and  $\Ccal_{Y_0,Y_1}$ over $\Hcal_{Y_0,Y_1}\subset\EOrbits^1_{Y_0,Y_1}$.  The above discussion gives a section 
\[d_r\Fmap_{Y_0,Y_1}:\Hcal_{Y_0,Y_1}\ra \Ccal_{Y_0,Y_1}\]
of the line bundle $\Ccal_{Y_0,Y_1}\ra \Hcal_{Y_0,Y_1}$ and a homomorphism  
\[d_x^2\Fmap_{Y_0,Y_1}:\Kcal_{Y_0,Y_1}\ra \Ccal_{Y_0,Y_1}\]
of line bundles over $\Hcal_{Y_0,Y_1}$, which are (respectively) defined by 
\begin{align*}
&d_r\Fmap_{Y_0,Y_1}(\vorbit):=(\vorbit;d_r\Fmap_\vorbit)\in \Ccal_\vorbit\quad\text{and}\quad d_x^2\Fmap_{Y_0,Y_1}(\vorbit;\xi):=(\vorbit;d_{\xi}^2\Fmap_{\vorbit})\in\Ccal_{\vorbit}.
\end{align*}

\begin{thm}\label{thm:super-nice-paths}
For  $Y_0,Y_1\in\Ycal^{\supernice}$, there is a Bair subset 
\[\Ycal^{\supernice}_{Y_0,Y_1}\subset \bigcap_{\lambda\in\Lambda}\Ycal^\lambda_{Y_0,Y_1}\subset\Ycal^*_{Y_0,Y_1}\] (of {\emph{super-nice}} paths) such that for every $\vY\in \Ycal^{\supernice}_{Y_0,Y_1}$ the following are satisfied:
\begin{itemize}
\item $\vY$ is a regular value of the projection  map $\pi_{Y_0,Y_1}:\OModuli^1_{Y_0,Y_1}\setminus \Bcal^1_{Y_0,Y_1}\ra \Ycal_{Y_0,Y_1}$ and $\OModuli^1_\vY$ has thus the structure of a $C^1$  $1$-manifold.
\item $\vY$ is a regular value of the projection $b_{Y_0,Y_1}$ and $\Bcal^1_\vY$ is a $0$-submanifold of $\OModuli^1_\vY$.
\item For every $\vorbit\in\EOrbits^1_\vY$, there are no roots $\lambda\neq \pm 1$ of unity  which are eigenvalues of the linearized holonomy map $\Fmap_\vorbit$.
\item The set of points $[\vorbit]\in\EOrbits^1_\vY/S^1$ where $\Fmap_\vorbit$ has $+1$ or $-1$ as an eigenvalue is a $0$-submanifold of $\OModuli^1_\vY$. Moreover, for every such  $\vorbit$, the total multiplicity of these two eigenvalues is $1$.
\item If $-1$ is an eigenvalue of $\Fmap_\vorbit$ for $\vorbit\in\EOrbits^1_\vY$, and $\xi$ is a generator for $\Ker(\Fmap_\vorbit+Id)$, then the vectors $d_r\Fmap_{\vorbit}$ and $d^2_\xi\Fmap_{\vorbit}$ are non-trivial in  $\Coker(\Fmap_\vorbit+Id)$.   
\end{itemize}
\end{thm}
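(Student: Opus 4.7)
The plan is to construct $\Ycal^\supernice_{Y_0,Y_1}$ as a countable intersection of Baire subsets of $\Ycal_{Y_0,Y_1}$, each obtained by applying the Sard--Smale theorem to a Fredholm projection from a universal moduli space. Conditions (1) and (2) follow immediately from Theorem~\ref{thm:cobordism-transversality}, which exhibits $\pi^1_{Y_0,Y_1}\colon\OModuli^1_{Y_0,Y_1}\setminus\Bcal^1_{Y_0,Y_1}\to\Ycal_{Y_0,Y_1}$ and $b_{Y_0,Y_1}\colon\Bcal^1_{Y_0,Y_1}\to\Ycal_{Y_0,Y_1}$ as Fredholm maps; Sard--Smale yields Baire sets $\Ycal^{(1)}_{Y_0,Y_1}$ and $\Ycal^{(2)}_{Y_0,Y_1}$ of regular values.

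For condition (3), each non-real root of unity $\lambda\in\Lambda\setminus\{\pm1\}$ determines a codimension-$2$ linear subspace $\Gamma_\lambda\subset\R^{n-1}$, so Lemma~\ref{lem:characteristic-function-path} identifies $\chi_{Y_0,Y_1}^{-1}(\Gamma_\lambda)$ with a Banach submanifold whose Fredholm projection to $\Ycal_{Y_0,Y_1}$ has index $-1$; regular values have empty preimage, and intersecting over the countable set $\Lambda\setminus\{\pm1\}$ gives $\Ycal^{(3)}_{Y_0,Y_1}$. For condition (4), the codimension-$1$ subspaces $\Gamma_{\pm1}\subset\R^{n-1}$ give index-$0$ projections whose generic preimages are $0$-submanifolds, while the codimension-$2$ subspaces $\Gamma_{1,1}$, $\Gamma_{1,-1}$, $\Gamma_{-1,-1}$ give index-$(-1)$ projections with empty generic preimage, forcing the total multiplicity of $\pm1$ as eigenvalues of $\Fmap_\vorbit$ to be at most $1$. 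Intersecting these Baire sets yields $\Ycal^{(4)}_{Y_0,Y_1}$.

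The main obstacle is condition (5). Over $\Hcal_{Y_0,Y_1}$, which projects to $\Ycal_{Y_0,Y_1}$ as a Fredholm map of index $0$, I define the universal vanishing loci
\[
\Hcal^{r}_{Y_0,Y_1}:=\{\vorbit\in\Hcal_{Y_0,Y_1}\mid d_r\Fmap_\vorbit=0\}\quad\text{and}\quad
\Hcal^{x}_{Y_0,Y_1}:=\{\vorbit\in\Hcal_{Y_0,Y_1}\mid d^2_\xi\Fmap_\vorbit=0\},
\]
where in the second definition $\xi$ is any non-zero generator of $\Ker(\Fmap_\vorbit+Id)$. If one shows that each of these loci is cut out transversely as a codimension-$1$ Banach submanifold of $\Hcal_{Y_0,Y_1}$, then its Fredholm projection to $\Ycal_{Y_0,Y_1}$ has index $-1$, and Sard--Smale produces a Baire subset $\Ycal^{(5)}_{Y_0,Y_1}$ whose elements avoid both. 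The desired set is then $\Ycal^\supernice_{Y_0,Y_1}:=\bigcap_{i=1}^{5}\Ycal^{(i)}_{Y_0,Y_1}$.

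The crux is the universal transversality claim: at any given $\vorbit\in\Hcal_{Y_0,Y_1}$, perturbations of $\vY$ inside $\Ycal_{Y_0,Y_1}$ should realize arbitrary values in the one-dimensional fibre $\Ccal_\vorbit$ of both $d_r\Fmap_\vorbit$ and $d^2_\xi\Fmap_\vorbit$, independently of one another. I would establish this by adapting the bump-function construction from the proof of Lemma~\ref{lem:characteristic-function}: identifying a tubular neighborhood of the image of $\gamma_\vorbit$ with the mapping cylinder $B_\vorbit\subset C(\fmap_\vorbit)$, one parametrizes perturbations $Z$ of $\vY$ supported there by bump functions $\rho(\theta)$ times polynomial profiles in the normal coordinate. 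Linear profiles with a $t$-dependent coefficient modify the $r$-derivative of the linearized holonomy and thus span $d_r\Fmap_\vorbit$, while purely spatial profiles of the form $\rho(\theta)\cdot q(x)$ with $q$ quadratic along the direction $\xi\in\Ker(\Fmap_\vorbit+Id)$ modify the second $x$-jet of the return map along that direction and thus span $d^2_\xi\Fmap_\vorbit$. An explicit computation, parallel to the determinant calculation that concludes the proof of Lemma~\ref{lem:characteristic-function}, shows that the resulting variations surject onto $\Ccal_\vorbit$ in each case, establishing the required transversality.
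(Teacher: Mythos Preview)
Your proposal is correct and follows essentially the same route as the paper: conditions (1)--(4) are obtained exactly as you say, by Sard--Smale applied to the Fredholm projections furnished by Theorem~\ref{thm:cobordism-transversality} and Lemma~\ref{lem:characteristic-function-path}, and condition (5) is obtained by showing that the zero loci of the section $d_r\Fmap_{Y_0,Y_1}$ and of the bundle map $d^2_x\Fmap_{Y_0,Y_1}$ over $\Hcal_{Y_0,Y_1}$ are cut out transversely in the universal sense, then applying Sard--Smale once more. The paper phrases the second of these as a bundle homomorphism $\Kcal_{Y_0,Y_1}\to\Ccal_{Y_0,Y_1}$ and works in the total space of $\Kcal_{Y_0,Y_1}$, whereas you work directly over the base; your framing is cleaner and yields the index~$-1$ that is actually needed for the generic fibre to be empty.

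One small inaccuracy in your transversality sketch for $d^2_\xi\Fmap_\vorbit$: the formula derived just before the theorem reads
\[
d^2_\xi\Fmap_\vorbit=\frac{\partial^2 q_\vorbit}{\partial x^2}(t,0)=-2\,\gmap_{x^3}(t,0)-3\big(\gmap_{x^2}(t,0)\big)^2,
\]
so a perturbation of the return map by a \emph{quadratic} profile in $x$ moves $\gmap_{x^2}$ linearly but moves $d^2_\xi\Fmap_\vorbit$ only through the square term, hence with vanishing first variation at points where $\gmap_{x^2}(t,0)=0$. To obtain universal transversality you should instead (or in addition) use \emph{cubic} profiles $\rho(\theta)\cdot c\,x^3$, which vary $\gmap_{x^3}$ and therefore $d^2_\xi\Fmap_\vorbit$ linearly in the perturbation parameter; this is the direct analogue of the linear-profile argument in Lemma~\ref{lem:characteristic-function}, one jet higher.
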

\begin{proof}
Building on the discussion preceding the statement of Theorem~\ref{thm:super-nice-paths}, one may use an argument similar to the one used in the proof of Theorem~\ref{thm:super-nice-vector-fields} to show that the intersection of the section $d_r\Fmap_{Y_0,Y_1}$ of the line bundle 
$\Ccal_{Y_0,Y_1}\ra \Hcal_{Y_0,Y_1}$ 
with the zero section is transverse, and the zero locus $\Gcal^{\jmath}_{Y_0,Y_1}$ of $d_r\Fmap_{Y_0,Y_1}$ is thus a Banach manifold which fibers over $\Ycal_{Y_0,Y_1}$ via a Fredholm projection map  $p^\jmath_{Y_0,Y_1}$ of index $0$. For $\vY$ in the set $\Ycal^\jmath_{Y_0,Y_1}$ of regular values of $p^\jmath_{Y_0,Y_1}$ and $\vorbit\in \Hcal_\vY$, it thus follows that $d_r\Fmap_\vorbit$ is non-trivial in  $\Coker(\Fmap_\vorbit+Id)$. Similarly, the intersection of the bundle homomorphism 
\[d_x^2\Fmap_{Y_0,Y_1}:\Kcal_{Y_0,Y_1}\ra\Ccal_{Y_0,Y_1}\]
 with the zero homomorphism  is transverse in the complement of the zero section of $\Kcal_{Y_0,Y_1}$. Therefore, the zero locus $\Gcal^{\kappa}_{Y_0,Y_1}$ of $d^2_x\Fmap_{Y_0,Y_1}$ is a Banach manifold which fibers over $\Ycal_{Y_0,Y_1}$ via a Fredholm projection map $p^\kappa_{Y_0,Y_1}$ of index $1$. Once again, for $\vY$ in the set $\Ycal^\kappa_{Y_0,Y_1}$ of regular values of $p^\kappa_{Y_0,Y_1}$, $\vorbit\in \Hcal_\vY$, and non-zero $\xi\in\Ker(\Fmap_\vorbit+Id)$, the vector $d^2_{\xi}\Fmap_\vorbit$ is non-trivial in $\Coker(\Fmap_\vorbit+Id)$. With the above discussion  in place, in order to prove Theorem~\ref{thm:super-nice-paths} it suffices to set
\[\Ycal^{\supernice}_{Y_0,Y_1}=\bigcap_{\lambda\in\Lambda\cup\{\imath,\jmath,\kappa\}}\Ycal^\lambda_{Y_0,Y_1}.\]
For every super-nice path $\vY\in\Ycal^{\supernice}_{Y_0,Y_1}$, the first and the second conditions are satisfied since $\vY\in\Ycal ^\infty_{Y_0,Y_1}$. The third condition follows since $\vY\in\Ycal^\lambda_{Y_0,Y_1}$ for every root $\lambda\neq \pm 1$ of unity and the fourth condition follows since 
\[\vY\in \Ycal^{+1}_{Y_0,Y_1}\cap \Ycal^{-1}_{Y_0,Y_1}\cap \Ycal^{\imath}_{Y_0,Y_1}.\] 
Finally, for the last condition, note that if $-1$ is an eigenvalue of $\Fmap_\vorbit+Id$, then by the previous parts, the kernel and the cokernel of $\Fmap_\vorbit+Id$ are both $1$-dimensional. If $\xi\neq 0$ generates the kernel of $\Fmap_\vorbit+Id$, it follows from $\vY\in \Ycal^{\jmath}_{Y_0,Y_1}$ that $d_r\Fmap_{\vorbit}=d_r\Fmap_{Y_0,Y_1}(\vorbit)$ is non-trivial in $\Coker(\Fmap_\vorbit+Id)$ and it follows from  $\vY\in\Ycal^\kappa_{Y_0,Y_1}$ that $d^2_\xi\Fmap_\vorbit=d^2_x\Fmap(\vorbit;\xi)$  is non-trivial in  $\Coker(\Fmap_\vorbit+Id)$, completing the proof.
\end{proof}
\subsection{A pair of instructive  examples}\label{subsec:examples}
In this subsection, we discuss a pair of examples. To simplify the notation, we use the interval $[-1,1]$ (instead of $[0,1]$) to parametrize the paths of vector fields in these two example. The first example illustrates why it is necessary to complete $\EOrbits/S^1$ by gluing $\COrbits$ to it, while the second example illustrates why contribution from the double cover of certain super-rigid embedded periodic orbits to the wight function is non-trivial. These examples are in fact discussed as a preface to the proof of the invariance result of the next section (i.e. Theorem~\ref{thm:invariance-count-function}). 
\begin{examp}[(Trading periodic orbits for ghost orbits)]\label{ex:1}
Let $M$ be the complex plane $\C$ (we forget the compactness assumption on $M$ for the purpose of this example). Assume that the paths $\vY^\pm$ of vector fields on $\C$ are given by 
\[\vY^\pm(t,z)=\left(-t+i\pm |z|^2\right)\cdot z\in\C,\quad\quad\forall\ t\in[-1,1],\ z\in\C.\]
If $z$ is a zero of $\vY^\pm_t$, the equation $\vY^\pm(t,z)=0$ implies that either $z=0$, or $-i=\pm|z|^2-t$. Since the latter is not possible, the origin is the only zero of the vector field $\vY^\pm_t$ for every $t\in[-1,1]$. The differential $d_0\vY^\pm_t$ is non-singular, and is in fact identified as multiplication by  $-t+i$. The latter linear map has $-t\pm i$ as its eigenvalues. Let $\corbit^\pm_t=(t,\vY^\pm,0,\C)\in\Ncal_{\vY^\pm}$. Then 
\[\Ncal_{\vY^\pm}=\left\{\corbit^\pm_t\ \big|\ t\in[-1,1]\right\}\simeq [-1,1]\quad\text{and}\quad\lambda(\corbit^\pm_t)=-t+i,\quad\forall\ t\in[-1,1].\]
Therefore, $\COrbits^1_{\vY^\pm}$ is identified with  $[0,1]\subset[-1,1]$. For $t\in[0,1]$ we have $n(\corbit^\pm_t)=-1$, from Definition~\ref{def:weight-constant}, and $s(\corbit^\pm_t)=2\pi$.  Suppose now that $\gamma$ is a closed $\vY^\pm_t$ orbit of period $s$. In  polar coordinates,  
\begin{align*}
\gamma(\theta)=r(\theta)\cdot e^{i\alpha(\theta)}\quad\quad\text{and}\quad\quad\quad&\dot\gamma=(\dot{r}+i\dot{\alpha})\cdot e^{i\alpha}=s\cdot \left(-t\pm r^2+i\right)\cdot e^{i\alpha}\\
\Leftrightarrow\quad&\dot\alpha=s\quad\text{and}\quad \dot{r}=s\cdot(-t\pm r^2).
\end{align*}
In particular, for $r$ to be a periodic function of $\theta\in\R$, we need to have $t=\pm r^2$. In other words, there are no non-constant closed $\vY^+$-orbits if $t\in[-1,0]$ and there are no non-constant closed $\vY^-$-orbits if $t\in[0,1]$. For $t\in[-1,1]\setminus\{0\}$, there is a unique embedded periodic orbit
\begin{align*}
\orbit^{\sgn(t)}_t=(2\pi,\vY^{\sgn(t)}_t,\gamma_t),
\quad\quad\text{where}\quad\gamma_t(\theta):=\sqrt{|t|}\cdot e^{2\pi\theta}.
\end{align*}
Here $\sgn(t)\in\{+,-\}$ is the sign of $t$. Thus, we have
\[\EOrbits^1_{\vY^+}/S^1=\big\{\orbit^+_t\ \big|\ t\in(0,1]\big\}\quad\text{and}\quad \EOrbits^1_{\vY^-}/S^1=\big\{\orbit^-_t\ \big|\ t\in[-1,0)\big\}.\]
Therefore,  $\OModuli^1_{\vY^+}$ and $\OModuli^1_{\vY^-}$ are both identified with the interval $[-1,1]$. Under these identifications, we have  
\[\pi_{\vY^\pm}:\OModuli^1_{\vY^\pm}\simeq[-1,1]\ra[-1,1],\quad\quad\quad\pi_{\vY^{+}}(t)=|t|\quad\text{and}\quad \pi_{\vY^{-}}(t)=t,\quad\forall\ t\in[-1,1].\] 
 Note that the $\pi_{\vY^+}$ is smooth on $\OModuli^1_{\vY^+}\setminus\Bcal^1_{\vY^+}$, but only continuous at $\Bcal^1_{\vY^+}\subset \OModuli^1_{\vY^+}$. At $\gamma_t(0)$, the linearized holonomy map $\Fmap_{\orbit^\pm_t}:\R\ra \R$  may then be computed to give $ \Fmap_{\orbit^\pm_t}(r)=c^\pm_t\cdot r$, where $0<c^-_t<1$ and $c^+_t>1$. Therefore,  we have 
\[\epsilon_1(\orbit^+_t)=\epsilon_2(\orbit^+_t)=-1\quad\forall\ t\in(0,1]\quad\quad\text{and}\quad\quad\epsilon_1(\orbit^-_t)=\epsilon_2(\orbit^-_t)=-1\quad\forall\ t\in[-1,0),\ d>1.\] 
These computations imply that $\OModuli^1_{\vY^\pm_t}$ is finite, while
\[n(\OModuli^1_{\vY^-_t})=-1,\quad n(\OModuli^1_{\vY^+_t})=0\quad\text{and}\quad n(\OModuli^d_{\vY^\pm_t})=0,\quad\forall\ d>1,\] for every $t\in[-1,1]$.
\qed
\end{examp}

The next example illustrates how a sequence of embedded periodic orbits converge to the double cover of an embedded periodic orbit as we move the vector field in a generic path.

\begin{examp}[(The neighborhood of a degree-$2$ periodic orbit)]\label{ex:2}
 Let $\bump:\R\ra [0,1/100]$ denote a smooth bump function which is equal to $1/100$ on $[-20,20]$ and vanishes outside $[-40,40]$. We may further assume that $\bump(-x)=\bump(x)$ and that $\bump'(x)\leq 0$ for $x>0$. Define a family of maps 
\begin{align*}
f_t:\R^2\ra\R^2,\quad\quad f_t(x,y):=\left(-x+\bump(x)\left(x^2-xt\right),-2y\right)\quad\forall\ (x,y)\in \R^2,\ t\in[-1,1].
\end{align*}
It may then be easily checked that $f_t$ is a diffeomorphism for every $t\in[-1,1]$, which is close to the diffeomorphism $f:\R^2\ra\R^2$, defined by $f(x,y)=(-x,-2y)$. In fact, $sf+(1-s)f_t$ is a diffeomorphism for every $s\in[0,1]$.  Let $M_t$ denote the mapping cylinder of $f_t$, i.e.
\[M_t=\frac{\R\times\R^2}{\sim_t},\quad\quad (\theta,f_t(x,y))\sim_t(\theta+1,x,y).\]
The isotopy between $f_t$ and $f$ gives a diffeomorphism from $M_t$ to the mapping cone $M_f$ of $f$ (which in turn is diffeomorphic to  $(\R/\Z)\times \R^2$). Under this identification, the vector field $\partial_\theta$ gives a vector field $Y_t$ on $M_f$, and in fact, a path $\vY$ of vector fields with $\vY(t,\cdot)=Y_t$. The periodic orbits of $Y_t$ are then identified with the orbits of the periodic points of $f_t$.  Since the origin is the unique fixed point of each $f_t$, it determines a unique closed $Y_t$ orbit $\gamma_0$ of period $1$, which is given by $\gamma_0(\theta)=(\theta,0,0)$. The holonomy map of $Y_t$ at $\gamma_0(0)=(0,0,0)\in M_f$ may then be identified with $f_t:\R^2\ra \R^2$. This determines a point $\orbit_t=(t,1,\vY,\gamma_0)\in\EOrbits^1_\vY$, while the linearized holonomy map of $\orbit_t$ is given by the differential of $f_t$ at $(0,0)$, i.e.
\begin{align*}
\Fmap_t=\Fmap_{\orbit_t}=\left(\begin{array}{cc}
-1-\frac{t}{100}&0\\ 0&-2
\end{array}\right).
\end{align*}
Therefore, $\epsilon_1(\orbit_t)=1$ while $\epsilon_2(\orbit_t)=\sgn(t)$. This implies that $n([\orbit_t])=1$ for all $t\in[-1,1]$, while $n([2\star\orbit_t])=0$ for $t>0$ and $n([2\star\orbit])=-1$ for $t<0$. The periodic orbit $\orbit_0$ is $1$-rigid, but not super-rigid.
Next, we consider closed $\vY$ orbits of period $2$. Such periodic orbits correspond to periodic points of $f_t$ of period $2$, i.e. solutions in
\begin{align*}
Z_t=\left\{(x,y)\in\R^2\ |\ f_t(f_t(x,y))=(x,y)\right\}&=\Big\{(x,0)\ \big|\ x=0\ \ \text{or}\ \  \bump(x)=0 \ \ \text{or}\ \  x(x-t)\bump(x)=2t\Big\}.
\end{align*}
In particular, the intersection of $Z_t$ with $U=(-20,20)\times \R\subset\R^2$ is given by 
\begin{align*}
Z_t\cap U=\{(0,0)\}\cup \Big\{(x,0)\in\R^2\ \big|\ x^2-xt-200t=0\Big\}.
\end{align*}
If $t\leq 0$ then $Z_t\cap U=\{(0,0)\}$, while for $t>0$, $Z_t\cap U$ includes a pair of solutions
\[z^\pm_t=\left(\frac{t\pm\sqrt{t^2+800t}}{2},0\right)=\left(x^\pm_t,0\right)\in Z_t\cap U.\]
In fact, $z^\pm_t\in$ is in $(-15, 15)\times \{0\}\subset U$. Together, these two points determine a closed $Y_t$ orbit of period  $2$, which is denoted by $[\orbit'_t]\in\EOrbits^1_{Y_t}/S^1\subset\OModuli^1_{Y_t}$. As $t$ converges to $0$, $[\orbit'_t]$ converges to the double cover of $[\orbit_0]$. The holonomy map associated with $\orbit'_t$ is the map
\[f_t\circ f_t:(\R^2,z_t^+)\ra (\R^2,z_t^+).\]
Direct computation implies that
\begin{align*}
A_t=d_{z_t^+}(f_t\circ f_t)=\left(\begin{array}{cc}
1-\frac{t}{50}-\left(\frac{t}{100}\right)^2&0\\ 0&4
\end{array}\right)\quad\quad\forall\ t\in(0,1].
\end{align*} 
Therefore, $\epsilon_1(\orbit'_t)=\epsilon_2(\orbit'_t)=-1$ for all $ t\in(0,1]$. The periodic orbits in $\OModuli_\vY$ 
which have image in $(-10,10)\times\R$ form a  subset $\vGamma$ of $\OModuli^1_\vY$.  Let $\vGamma^d$ denote the subset of $\vGamma$ which consists of periodic orbits of period $d$ and $\Gamma^d_t$ denotes the intersection of $\OModuli_{Y_t}$ with $\vGamma^d$. Each $\Gamma^d_t$ is then compact and open in $\OModuli_\vY$. In fact,  $\Gamma^1_t=\{[\orbit_t]\}$ for all $t$, while 
\begin{align*}
\Gamma^2_t=\begin{cases}
\left\{[2\star\orbit_t],[\orbit'_t]\right\}& \text{if}\ t>0\\
\left\{[2\star\orbit_t]\right\}&\text{if}\ t\geq 0
\end{cases}.
\end{align*}
Moreover, every compact and open subset of $\OModuli_\vY$ is a union of finitely many $\vGamma^d$s, and every compact and open subset of $\OModuli_{Y_t}$ is a union of finitely many $\Gamma^d_t$s for every  $t\in[-1,1]$. The above computations imply that $n(\Gamma^1_t)=1$ for all $t$, while $n(\Gamma^2_t)=-1$ for all non-zero $t\in[-1,1]$. Although the number of points in $\Gamma^2_t$ changes as $t$ passes from negative values to positive values, we see that the wight function $n(\Gamma^2_t)$ remains unchanged. For $d>2$, the above computations show that $n(\Gamma^d_t)=0$.
\end{examp}

\section{Invariance of the weight function}\label{sec:sign}
\subsection{The invariance of the weight function for generic vector fields}
The following theorem is the main technical result of the paper.
\begin{thm}\label{thm:invariance-count-function}
Fix $Y_0, Y_1\in\Ycal^{\supernice}$ and $\vY\in\Ycal^{\supernice}(Y_0, Y_1)$. If $\vGamma$ is a compact and open subset of $\OModuli_\vY$ and $\Gamma_s=\vGamma\cap\big(\{s\}\times \OModuli_{Y_s}\big)$ for $s\in[0,1]$, we have $n(\Gamma_0)=n(\Gamma_1)$.
\end{thm}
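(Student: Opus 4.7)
The strategy is to show that $s\mapsto n(\Gamma_s)$ is continuous on $[0,1]$; since it takes values in $\Z$, this will force it to be constant. By the super-nice hypothesis together with Theorems~\ref{thm:cobordism-transversality} and~\ref{thm:super-nice-paths}, there are only finitely many bifurcation values $s_1<\cdots<s_N$ in $(0,1)$ at which $\Gamma_s$ fails to consist of super-rigid orbits, and on each complementary interval the weight function is locally constant because each weight is the sign of a non-degenerate determinant varying continuously in $s$. By Definitions~\ref{def:weight-constant} and~\ref{defn:sign} the weight of a ghost orbit of degree $\geq 2$ and of a periodic orbit of degree $\geq 3$ is zero, so only the degree-$1$ and degree-$2$ strata of $\vGamma$ contribute, and there are three local bifurcation types to check.

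The first is a fold of the projection $\OModuli^1_\vY\to[0,1]$ at an orbit $\orbit^*$ whose linearized holonomy $\Fmap_{\orbit^*}$ has $+1$ as a simple eigenvalue. Mimicking the proof of Lemma~\ref{lem:regular-orbits}, the local Kuranishi model reduces to a scalar equation $h(s,x)=0$ on a $1$-dimensional center, with $h(s^*,0)=\partial_x h(s^*,0)=0$ and $\partial_s h(s^*,0),\partial^2_x h(s^*,0)$ both nonzero, producing two nearby orbits $x=\pm\sqrt{-a(s-s^*)/b}$ on exactly one side of $s^*$. Since $\det(\Fmap-Id)$ in the transverse direction is proportional to $\partial_x h$, which takes opposite signs at the two solutions, the two orbits carry opposite weights and their contributions to $n(\Gamma^1_s)$ cancel. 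The second type is a boundary orbit $\corbit^*\in\Bcal^d_\vY$; for $d\geq 3$ there is nothing to check, while for $d\in\{1,2\}$ Theorem~\ref{thm:gluing-moduli-spaces} presents $\corbit^*$ as the junction of a ghost arc and a periodic arc in $\OModuli^d_\vY$. A Hopf normal form computation at $\corbit^*$ relates $\sgn(\det(d_{x^*}\vY_{t^*}))$ to $\sgn(\det(\Fmap-Id))$ (for $d=1$) or to the analogous quantity formed from $\epsilon_1,\epsilon_2$ of the shrinking embedded orbit (for $d=2$). The relation is precisely the one needed to make $n(\Gamma^d_s)$ continuous: the two weights must cancel when both arcs project to the same side of $s^*$, and must agree when they project to opposite sides. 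Examples~\ref{ex:intro-1} and~\ref{ex:1} illustrate these two configurations.

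The third and most delicate event is a period doubling at $\orbit^*\in\EOrbits^1_{\vY_{s^*}}$ where $\Fmap_{\orbit^*}$ has $-1$ as a simple eigenvalue. Here $\orbit^*$ itself is a regular point of the projection $\OModuli^1_\vY\to[0,1]$, but its double cover $2\star\orbit^*$ is a degenerate critical point of the projection $\OModuli^2_\vY\to[0,1]$. The reduced Kuranishi model constructed just before Theorem~\ref{thm:super-nice-paths} gives a factorization $p_{\orbit^*}(s,x)=x\,q_{\orbit^*}(s,x)$ on a $1$-dimensional center, whose zero locus decomposes into the smooth arc $\{x=0\}$ (the family $2\star\orbit(s)$) and a parabolic arc $\{q=0\}$ (an embedded orbit $\orbit'(s)$ of twice the period of $\orbit^*$, existing on exactly one side of $s^*$). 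The non-vanishing of $\partial_s q_{\orbit^*}(s^*,0)$ and $\partial^2_x q_{\orbit^*}(s^*,0)$ guaranteed by the super-nice hypothesis drives a Hopf-type cancellation: an explicit Taylor expansion of $\gmap_{\orbit^*}$, parallel to the arithmetic of Example~\ref{ex:2}, shows that as $s$ crosses $s^*$ the weight $n([2\star\orbit(s)])$ jumps by $\pm 1$ because $\epsilon_2(\orbit(s))$ flips sign, while $\sgn(\det(\Fmap_{\orbit'(s)}-Id))$ takes exactly the opposite sign, so the combined contribution $n([2\star\orbit(s)])+n([\orbit'(s)])$ is continuous at $s^*$. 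The crux of the argument is precisely this final sign computation, since it is the only step that couples the degree-$1$ and degree-$2$ strata of $\vGamma$; once it is established the three continuity statements combine to give the claim.
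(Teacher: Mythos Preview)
Your approach is essentially the paper's: both proofs isolate a finite set of bifurcation times inside the compact $\vGamma$ and verify local constancy of $n(\Gamma_s)$ by analysing the same three events (the eigenvalue-$+1$ fold, the eigenvalue $-1$ period-doubling via the Kuranishi model $p_\vorbit=xq_\vorbit$, and the Hopf-type junction at $\Bcal_\vY$), with the same sign computations. Two small points you glide over but the paper handles explicitly: in the $+1$-fold case the two branches $\tau^\pm$ also carry degree-$2$ covers, and one needs $\epsilon_2(\tau^+(r))=-\epsilon_2(\tau^-(r))$ (not just $\epsilon_1$) so that the contribution to $\Gamma^2_s$ cancels as well; and the ghost-orbit arc in $\COrbits^1_\vY$ can itself have folds of $\pi_\vY$ (where $\det(d_x\vY_t)=0$), which produces a cancelling pair with opposite $\sgn(\det(d_x\vY_t))$ and must be listed among the bifurcation events.
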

\begin{proof}
Note that $\OModuli^d_\vY\simeq \OModuli^1_\vY$ is a $1$-manifold for every $d\in\Z^+$. Therefore, $\vGamma$ is a union of finitely many components from  $\amalg_d\OModuli^d_\vY$.  Let us denote the projection map from $\OModuli^1_\vY$ to $[0,1]$ by $\pi_\vY$. There is a finite subset 
\[\mathfrak{t}=\left\{t_1,t_2,\ldots,t_N\right\}\subset [0,1],\quad \text{with}\quad  0<t_1<t_2<\cdots<t_N<1\] and the corresponding embeddings $\gamma_{j}\in\Xcal$ for $j=1,\ldots,N$ such that 
\[\big\{[\vorbit_j]=[t_j,s_j,\vY,\gamma_{j}]\big\}_{j=1}^{N}\] 
is the set of all orbits $[\vorbit]\in\EOrbits^1_\vY/S^1$ such that $\Fmap_\vorbit$ has $+1$ or $-1$ as an eigenvalue (c.f. Theorem~\ref{thm:super-nice-paths}). Moreover, there is a finite subset 
\[\mathfrak{t}'=\left\{t'_1,t'_2,\ldots,t'_{N'}\right\}\subset [0,1],\quad \text{with}\quad  0<t'_1<t'_2<\cdots<t'_{N'}<1\] and the corresponding points  
\[\big\{\vcorbit_j=(t'_j,s'_j,\vY,x_j,P_j)\big\}_{j=1}^{N'}\in\Bcal^1_{Y_0,Y_1},\] 
which are all the boundary orbits of $\vGamma\cap\OModuli^1_\vY$, i.e. the points in $\vGamma\cap\Bcal^1_\vY$. If $[\vorbit]\in\vGamma\cap\EOrbits_\vY/S^1$ and $t=t_\vorbit\in[0,1] \setminus(\mathfrak{t}\cup\mathfrak{t}')$, it follows that $\Psi_{d\star\vorbit}$ is surjective for all $d\in\Z^+$. Therefore, there is a map 
\[\tau:J=(t-\epsilon,t+\epsilon)\ra\OModuli_\vY\] 
with $\tau(s)=[\vorbit_s]$, such that $\vorbit_t=\vorbit$, $\pi_\vY\circ\tau=Id_J$ and the only points in a neighborhood of $[d\star\vorbit]$ in $\OModuli_\vY$ are the points $\{[d\star\vorbit_s]\ |\ s\in J\}$.  Moreover, $\epsilon(d\star\vorbit_s)$ remains fixed over $J$. On the other hand, if $\corbit\in\vGamma\cap\COrbits_\vY$, and $\pi_\vY(\corbit)=t$, there is a map
\[\tau:J=(t-\epsilon,t+\epsilon)\ra \OModuli_\vY\]
with $\tau(t)=\corbit=(t,s,\vY,x,P)$ such that $\pi_\vY\circ \tau$ is either $Id_J$, or one of the map $i^{\pm}$ sending $t\pm\delta$. If $\pi_\vY\circ \tau=Id_J$, then $n(\tau(s))$ remains constant along $J$. On the other hand, if $\pi_\vY\circ \tau=i^\pm$, the differential $d_x\vY(t)$ is degenerate, and $0$ is an eigenvalue of $d_x\vY(t)$ with multiplicity $1$. It follows that $n(\tau(t-\delta))=-n(\tau(t+\delta))$. From these two observations, it follows that   $n(\Gamma_s)$ remains constant over $[0,1]\setminus (\mathfrak{t}\cup\mathfrak{t}')$. \\

We may next fix $j\in\{1,\ldots,N\}$ and set $t=t_j$, $\gamma=\gamma_j$, $s=s_j$ and $\vorbit=\vorbit_j=(t,s,\vY,\gamma)$, which is either in $\EOrbits^{\Gamma_{1}}_{Y_0,Y_1}/S^1$ (case A) or in $\EOrbits^{\Gamma_{-1}}_{Y_0,Y_1}/S^1$ (case B). We will first study case B, which should be compared with Example~\ref{ex:2}, as we will see below. In this case, one may assume that $\gamma$ is an embedding. It also follows that $\Psi_\vorbit$ is surjective, and as before, there is a map 
\[\tau_1:J=(t-\epsilon,t+\epsilon)\ra\OModuli_\vY^1,\] 
represented by $s\mapsto \vorbit_s\in\EOrbits^1_\vY$, such that $\vorbit_t=\vorbit$, $\pi_\vY\circ\tau_1=Id_J$ and that the only intersection of $\vGamma$ with $\OModuli^1_Y$ in a neighborhood of $\vorbit$ is in the image of $\tau_1$. Moreover, by the discussion preceding Theorem~\ref{thm:super-nice-paths} we obtain a map 
\begin{align*}
&p_\vorbit:J\times \Ker(\Fmap_\vorbit+Id)=J\times\R\ra \R=\Coker(\Fmap_\vorbit+Id),\\
&p_\vorbit(r,x):=\gmap_\vorbit(r,\gmap_\vorbit(r,x))-x,\quad\quad\forall\ (r,x)\in J\times\R.
\end{align*}
Since $\gmap_\vorbit(r,0)=0$ for  $r\in J$, $(r,0)$ belongs to the zero locus of $p_\vorbit$. The zero set of $p_\vorbit$  gives a Kuranishi model for a neighborhood of $2\star \vorbit$ in $\OModuli_\vY$, in the sense that away from $(r,0)$ which corresponds to $\tau_1(r)$, every pair of points $\{(r,x), (r,\gmap_\vorbit(r,x))\}$ with $p_\vorbit(r,x)=0$ corresponds to an embedded orbit 
\[\tau_2(r,x)=\tau_2(r,\gmap_\vorbit(r,x))\in\OModuli^1_\vY\] 
in the neighborhood of $2\star \vorbit$. As observed in the discussion preceding the statement of Theorem~\ref{thm:super-nice-paths}  $p_\vorbit(r,x)=x q_\vorbit(r,x)$ for a function $q_\vorbit:J\times\R\ra \R$ with $q_\vorbit(t,0)=0$ and $\partial_xq_\vorbit(t,0)=0$. The last condition in the statement of Theorem~\ref{thm:super-nice-paths} implies that for $\vY\in\Ycal^{\supernice}_{Y_0,Y_1}$, we have
\begin{align*}
&0\neq \frac{\partial q_\vorbit}{\partial r}\big|_{(t,0)}=\frac{\partial^2 p_\vorbit}{\partial r\partial x}\big|_{(t,0)}=-\frac{\partial^2\gmap_\vorbit}{\partial r\partial x}\big|_{(t,0)}\quad\text{and}\\ & 0\neq \frac{\partial^2 q_\vorbit}{\partial x^2}\big|_{(t,0)}=\frac{1}{3}\frac{\partial^3 p_\vorbit}{\partial x^3}\big|_{(t,0)}=-2\frac{\partial^3\gmap_\vorbit}{\partial x^3}\big|_{(t,0)}-3\left(\frac{\partial^2 \gmap_\vorbit}{\partial x^2}\big|_{(t,0)}\right)^2.
\end{align*}
After applying a diffeomorphism, we may assume that the map $\gmap_\vorbit:J\times\R\ra\R $ is given by
\[\gmap_\vorbit(r,x):=x(-1+a(r-t)+bx+cx^2),\quad\forall\ (r,x)\in J\times\R,\] where $a,c+b^2\neq 0$. Thus, for $(r,x)$ in  a neighborhood of $(t,0)$ we have $p_\vorbit(r,x)=0$ if and only if 
\[r=-\frac{2(c+b^2)}{a}\cdot x^2+\epsilon_\vorbit(x),\quad\text{where}\quad \lim_{x\ra 0}\frac{\epsilon_\vorbit(x)}{x^2}=0.\]
 For simplicity, let us assume that $a,c+b^2>0$. Then for $r>t$ there are no zeros $(r,x)$ of $p_\vorbit$, while for $r\in (t-\epsilon,t)$, if $x_1(r),x_2(r)$ are the two roots of $p_\vorbit(r,x)=0$ we have 
 \[\gmap_\vorbit(r,x_1(r))=x_2(r),\quad \gmap_\vorbit(r,x_2(r))=x_1(r)\quad\text{and}\quad\tau_2(r,x_1(r))=\tau_2(r,x_2(r)).\]
We may thus set $\tau_2(r)$ equal to this common value to obtain a map 
\[\tau_2:I=(t-\epsilon,t)\ra \OModuli^1_\vY,\quad\text{with}\quad\pi_\vY\circ\tau_2=Id_{I}.\] 
It also follows that a neighborhood of $\vorbit$ in $\OModuli_\vY$ is identified with 
\[\big\{2\star\tau_1(r) \big|\  r\in J\big\}\cup\big\{\tau_2(r)\ \big|\ r\in I\big\}.\] 
The cases where $a$ or $c+b^2$ (or both) are negative are handled in a completely similar way.\\

As before, let us assume that in appropriate coordinates, $\Fmap_\vorbit$ is given by the matrix of Equation~\ref{eq:-holonomy-map}. Then for $r$ sufficiently close to $t$, $\epsilon_d(\tau_1(r))$ and $\epsilon_d(\tau_2(r))$ are given by   
\begin{align*}
&\epsilon_d(\tau_1(r))=\epsilon(Q^d)\cdot \sgn\Big(\Big(\frac{\partial \gmap_\vorbit}{\partial x}\big|_{(r,0)}\Big)^d-1\Big)=\begin{cases}-\epsilon(Q^d)&\text{if $d$ is odd}\\-\epsilon(Q^d)\cdot\sgn(a(r-t))&\text{if $d$ is even}\end{cases},\quad\text{and}\\
&\epsilon_d(\tau_2(r))=\epsilon(Q^{2d})\cdot \sgn\Big(\Big(1+\frac{\partial p_\vorbit}{\partial x}\big|_{(r,0)}\Big)^d-1\Big)=\epsilon(Q^{2d})\cdot\sgn\Big(\frac{\partial p_\vorbit}{\partial x}\big|_{(r,0)}\Big)=\epsilon(Q^{2d})\cdot\sgn(a(r-t))
\end{align*}   

We may thus summarize the above discussion as follows. For $[\vorbit]\in\vGamma\cap\OModuli^1_\vY$ in case B,  there is an interval $I$, which is $(t-\epsilon,t)$ or $(t,t+\epsilon)$, and  $C^1$ maps $\tau_1$ and $\tau_2$ from $J=(t-\epsilon,t+\epsilon)$ and $I$ (respectively) to $\OModuli^1_\vY$, which give a local model for $\OModuli_\vY$ via the following properties:

\begin{itemize}
\item $\pi_\vY\circ \tau_1$ and $\pi_\vY\circ \tau_2$ are the identity maps of $J$ and $I$, respectively.
\item $\tau_1(t)=\vorbit$ and $2\star\vorbit$ is the limit of $\tau_2(r)$ as $r$ approaches $t$. Moreover, $2\star\vorbit$ is not the limit of any sequence in $\OModuli_\vY\setminus \big(\tau_2(I)\cup2\star\tau_1(J)\big)$.
\item $\epsilon_d(\tau_1(r))$ remains constant for $r\in I$ if $d$ is odd, while for even values of $d$, its sign changes as $r$ passes $t$. For $r\in I$, we have $\epsilon_d(\tau_2(r))=-\epsilon_{2d}(\tau_1(r))$. 
\end{itemize} 

\begin{figure}
\def\svgwidth{12cm}
\begin{center}
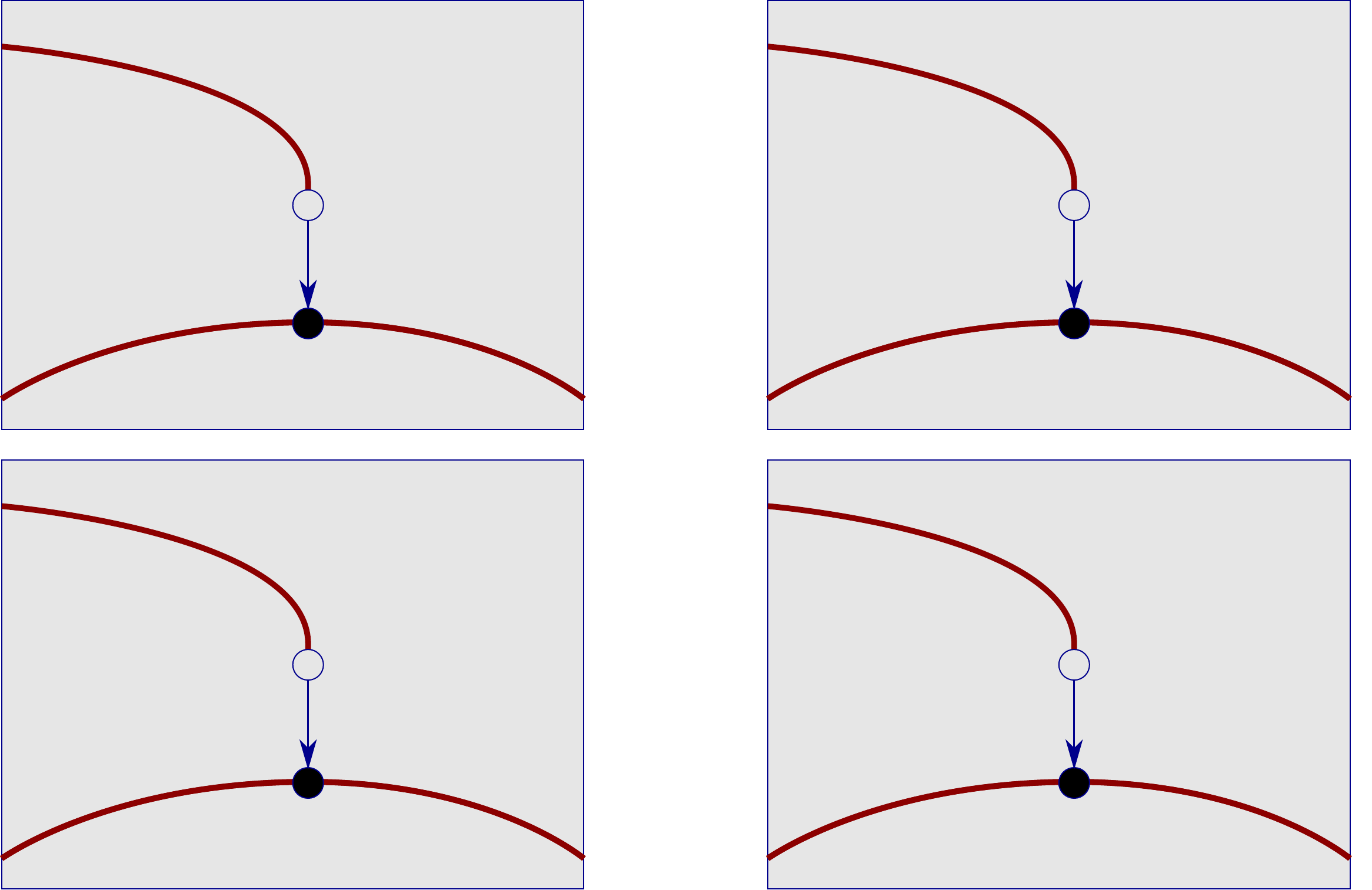
 \caption{\label{fig:local-contribution}
{A sequence of periodic orbits in $\EOrbits_\vY^1$ may converge to the double cover of a periodic orbit $\vorbit\in\EOrbits_\vY^1$. The maps $\tau_1$ and $\tau_2$ give a model for $\EOrbits_\vY$ near the limit orbit. The pair  $(\epsilon_1,\epsilon_2)$ associated with the periodic orbits in a neighborhood $\tau_1(t-\epsilon,t+\epsilon)$ of $\vorbit$ in  $\EOrbits_\vY^1$ and the periodic orbits in $\tau_2(t-\epsilon,t)\subset \EOrbits_\vY^1$ which are close to $2\star\vorbit$ follows one of the $4$ illustrated patterns. }}
\end{center}
\end{figure} 
 
The possible values for the function $(\epsilon_1,\epsilon_2)$  over the above local model is illustrated in Figure~\ref{fig:local-contribution} for $I=(t-\epsilon,t)$. In particular, the contribution of the aforementioned local model to $n(\Gamma_r)$ remains constant for $r\in J$. The case where $I=(t,t+\epsilon)$ is handled in a completely similar manner. The details of the argument in case B are best understood if the reader follows the steps for the vector field discussed in Example~\ref{ex:2}, as mentioned earlier.\\

In case A, $t$ is a critical value for $\pi_\vY$. There is thus a an interval $I$ which is either $(t-\epsilon,t]$ or $[t,t+\epsilon)$, and the maps  $\tau^{\pm}:I\ra \OModuli^1_\vY$ satisfying the following conditions:
\begin{itemize}
\item $\pi_\vY\circ\tau^+$ and $\pi_\vY\circ\tau^-$ are the identity map of $I$.
\item $\tau^+(t)=\tau^-(t)=[\vorbit]$ is a periodic orbit which is not a limit point of $\OModuli_\vg\setminus (\tau^+(I)\cup \tau^-(I))$.
\item $\Ker(\Fmap_{\tau^\pm(r)}-\lambda Id)$ is trivial for $r\in I^\circ$ and any root of unity $\lambda$, while $\Ker(\Fmap_{\vorbit}-Id)$ is $1$-dimensional and $\Ker(\Fmap_\vorbit-\lambda Id)$ is trivial for any other root of unity $\lambda$.
\item $\epsilon_d(\tau^+(r))=-\epsilon_d(\tau^-(r))$ for $d=1,2$.
\end{itemize}

Once again, the above observations imply that  $n(\Gamma_r)$ remains constant for $r$ belonging to an interval $(t-\epsilon,t+\epsilon)$ around $t$. Therefore, $n(\Gamma_t)$ is constant on $[0,1]\setminus\mathfrak{t}'$.\\

To complete the proof, we also need to show that $n(\Gamma_t)$ remains constant as $t$ passes the values $t_1',\ldots,t'_{N'}$. Let $\vcorbit=(t,s,\vY,x,P)=\vcorbit_j$ be a point in $\Bcal^1\cap\OModuli^1_\vY$ for some $j=1,\ldots,N'$. Note that $\OModuli_\vY$  is a $1$-manifold in a neighborhood $\pi_\vY^{-1}(t)$. Therefore, a neighborhood of $\vcorbit$ in $\OModuli^1_\vY$ may be identified as the image of an embedding 
\[\tau:(t-\epsilon,t+\epsilon)\ra \OModuli^1_\vY,\quad\quad\text{with}\quad\tau(t)=\vcorbit,\ \ \tau(t-\epsilon,t]\subset\EOrbits^1_\vY/S^1\ \ \text{and}\ \ \tau(t,t+\epsilon)\subset\COrbits^1_\vY.\] 
Moreover, we may assume that the multiple covers of the image of $\tau$ are the only points in $\OModuli_\vY$ which are close to multiple covers of $\vcorbit$.  The composition of $\tau$ with the projection map $\pi_\vY:\OModuli^1_\vY\ra [-1,1]$ may then be identified as $t+\delta\mapsto t\pm \delta$ or $t+\delta\mapsto t\pm |\delta|$ for $\delta\in(-\epsilon,\epsilon)$, c.f. Example~\ref{ex:1}. Let us assume that 
\[\pi_\vY\circ \tau=Id:(t-\epsilon,t+\epsilon)\ra (t-\epsilon,t+\epsilon).\]
Other cases are handled in a completely similar manner. In this case, it suffices to show that 
\[\epsilon_1(\tau(t-\delta))=\epsilon_2(\tau(t-\delta))=n(\tau(t+\delta))\]
for sufficiently small real numbers $\delta>0$. However, from the identification of a neighborhood of $\vcorbit\in\Bcal^1_{Y_0,Y_1} $
in $\EOrbits^1_{Y_0,Y_1}/S^1$ with $\Bcal^1_{Y_0,Y_1}\times\R^+$ (see the proof of Theorem~\ref{thm:gluing-moduli-spaces}) it follows that every point in $\EOrbits^1_\vY/S^1$ which is sufficiently close to $\vcorbit$ is definite. More precisely, $d_xY_t$ has an eigenvalue on $i\R$ with multiplicity $1$. Further, we may assume that in appropriate coordinates
\[d_xY_t=\left(\begin{array}{ccc}
0&-a&0\\a&0&0\\ 0&0&Q\end{array}\right),\]
where $a$ is a non-zero real number and $Q$ is a $(n-2)\times (n-2)$ matrix without any eigenvalues on $i\R$. The linearized holonomy map associated with $\tau(t-\delta)$, for sufficiently small values of $\delta>0$, is then close to a matrix of the form
\[\Fmap'_{\tau(t-\delta)}=\left(\begin{array}{cc}
\lambda_\delta&0\\0&\exp\left(\frac{s}{2\pi}Q\right)\end{array}\right),\]
where $\lambda_\delta$ is a positive real number not equal to $1$. As in Example~\ref{ex:1}, the assumption on $\pi_\vY\circ \tau$ implies (similar to case of $\vY^-$ in the aforementioned example) that $0<\lambda_\delta<1$. Therefore, 
\[\epsilon_1(\tau(t-\delta))=\epsilon_2(\tau(t-\delta))=-\sgn\left(\det\left(\exp(Q)\right)\right)=-(-1)^{c(Q)}=(-1)^{c_-(\tau(t+\delta))},\]
where $c(Q)$ denotes the number of eigenvalues $\lambda$ of $Q$ (counting with multiplicity) with $\mathrm{Re}(\lambda)<0$. This completes the proof of the claim when $\pi_\vY\circ\tau$ is the identity map. The proof in the other cases is similar.   
\end{proof}
\subsection{The weight function for an arbitrary vector field}
\label{subsec:count-function-general}
Given $Y\in\Ycal$, a subset $\Gamma$ of $\OModuli_Y$ is called {\emph{isolated}} if $\Gamma$ is compact and open in $\OModuli_Y$. For such $\Gamma$, there are bounded open subsets $\Ucal=\Ucal_{\Gamma}$ and $\Ucal'=\Ucal'_\Gamma$ of $\OModuli$ with  $\overline{\Ucal}\subset\Ucal'$ and $\Gamma=\OModuli_Y\cap\Ucal= \OModuli_Y\cap\Ucal'$. If there is a sequence $\{Y_j\}_j$ in $\Ycal$ which converges to $Y$ such that $\Gamma_j= \OModuli_{Y_j}\cap\Ucal $ is not closed, then there are  $\{\corbit^k_j\}_k\in\OModuli_{Y_j}$ which converge to a point  $\corbit_j\in\Ucal\cap\OModuli_{Y_j}$. By passing to a sub-sequence, we may assume that $\{\corbit_j\}_j$ converges to a point  $\corbit\in\overline{\Ucal}\cap\OModuli_Y=\Gamma$. If $k_j$ is sufficiently large, the sequence $\{\corbit_j^{k_j}\}_j$ (which is outside $\Ucal$) will also converge to $\corbit$. Since $\OModuli\setminus\Ucal$ is closed,  $\corbit$ is included in $\OModuli\setminus\Ucal$ and $\Gamma\cap(\OModuli\setminus\Ucal)$ is non-empty. This contradiction implies that there is an open and path connected neighborhood $U=U_{\Gamma}$ of $Y$ in $\Ycal$ such that for every  $Y'\in U$, the set $\Gamma_{Y'}=\OModuli_{Y'}\cap\Ucal$ is isolated (since $\Ucal$ is open and bounded). Note that the choice of $\Ucal_{\Gamma}$ and $U_{\Gamma}$ is not unique. 

\begin{defn}\label{defn:contribution}
If $\Gamma$ is an isolated subset of $\OModuli_Y$ for some $Y\in\Ycal$, define the {\emph{weight}} of $\Gamma$ to the {\emph{orbit count function}} by $n(\Gamma):=n(\Gamma')$, where $Y'$ is an arbitrary vector field in $\Ycal^{\supernice}\cap U_{\Gamma}$ and $\Gamma'$ is the finite set $\OModuli_{Y'}\cap\Ucal_{\Gamma}$. 
\end{defn}

\begin{thm}\label{thm:invariance}
If $Y\in\Ycal$ and $\Gamma$ is an isolated subset of $\OModuli_Y$, the weight $n(\Gamma)$ is independent of the choices made in Definition~\ref{defn:contribution}. Moreover, if $\vY\in\Ycal_{Y_0,Y_1}$ is a path connecting $Y_0$ to $Y_1$ and $\vGamma$ is an isolated  subset of $\OModuli_\vY$, then $n(\Gamma_0)=n(\Gamma_1)$, where $\Gamma_i=\vGamma\cap\OModuli_{Y_i}$ for $i=0,1$.
\end{thm}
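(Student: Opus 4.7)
The plan is to reduce both claims to Theorem~\ref{thm:invariance-count-function} by approximating arbitrary paths of vector fields by super-nice ones while keeping the isolating open neighborhood $\Ucal$ intact.

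For the independence claim, I would first observe that given two admissible choices $(\Ucal_i, U_i)$ for $\Gamma$ with $i=0,1$, the intersections $\Ucal := \Ucal_0 \cap \Ucal_1$ and $U := U_0 \cap U_1$ form another admissible pair: $\Gamma = \OModuli_Y \cap \Ucal$ still holds, and for $Y' \in U$, $\OModuli_{Y'} \cap \Ucal$ is a closed-and-open subset of the isolated set $\OModuli_{Y'} \cap \Ucal_i$, hence isolated. So it suffices to show that for a single admissible pair $(\Ucal, U)$ and any $Y'_0, Y'_1 \in U \cap \Ycal^{\supernice}$, the weights $n(\Gamma'_i) := n(\OModuli_{Y'_i} \cap \Ucal)$ agree. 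Since $U$ is open and path-connected in $\Ycal$, I pick a $C^1$ path $\hat\vY: [0,1] \to U$ from $Y'_0$ to $Y'_1$. The condition ``the image stays in $U$'' is open in $\Ycal_{Y'_0, Y'_1}$, so it intersects the Baire subset $\Ycal^{\supernice}_{Y'_0, Y'_1}$ of Theorem~\ref{thm:super-nice-paths} non-trivially; pick such a super-nice path $\vY$.

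Now set $\vGamma := e_{Y'_0, Y'_1}^{-1}(\Ucal) \cap \OModuli_\vY$, so each slice $\vGamma \cap \OModuli_{\vY_t}$ coincides with $\OModuli_{\vY_t} \cap \Ucal$ and is therefore isolated by the hypothesis on $U$. Openness of $\vGamma$ in $\OModuli_\vY$ is immediate from openness of $\Ucal$. For compactness I would exploit the slightly larger open set $\Ucal' \supset \overline\Ucal$ provided in the discussion preceding Definition~\ref{defn:contribution}: after shrinking $U$ if necessary, no point of $\Ucal'\setminus\Ucal$ lies in any $\OModuli_{Y'}$ for $Y'\in U$, so any limit point of a sequence in $\vGamma$ meeting $\OModuli_{\vY_{t^*}}$ must lie in $\overline\Ucal\cap\OModuli_{\vY_{t^*}}\subset\Ucal\cap\OModuli_{\vY_{t^*}}\subset\vGamma$. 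Theorem~\ref{thm:invariance-count-function} then gives $n(\Gamma'_0) = n(\Gamma'_1)$, proving well-definedness.

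For the path invariance claim, I would cover $[0,1]$ by finitely many intervals on which the isolating data trivializes. The evaluation map $e_{Y_0, Y_1}$ identifies each slice of $\OModuli_\vY$ at $t$ with $\OModuli_{\vY_t}$, and the isolating bounded open set $\vU \subset \OModuli_{Y_0, Y_1}$ representing $\vGamma$ yields, for each $t$, an admissible pair $(\Ucal_t, U_t)$ for $\Gamma_t$ together with an open interval $I_t \ni t$ such that $\vY(I_t) \subset U_t$ and $\Gamma_s = \OModuli_{\vY_s} \cap \Ucal_t$ for all $s \in I_t$; continuity of $\vY$ and compactness of $[0,1]$ extract a finite subcover $I_{t_1}, \dots, I_{t_N}$. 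Within each $I_{t_j}$ and for any $s, s' \in I_{t_j}$, I would choose super-nice vector fields $Y'_s, Y'_{s'} \in \Ycal^{\supernice} \cap U_{t_j}$ near $\vY_s, \vY_{s'}$ respectively and connect them by a super-nice path in $U_{t_j}$ as in the previous paragraph. Theorem~\ref{thm:invariance-count-function} together with Definition~\ref{defn:contribution} applied at $\vY_s$ and $\vY_{s'}$ gives $n(\Gamma_s) = n(\OModuli_{Y'_s}\cap\Ucal_{t_j}) = n(\OModuli_{Y'_{s'}}\cap\Ucal_{t_j}) = n(\Gamma_{s'})$. Chaining these equalities across the finite cover yields $n(\Gamma_0) = n(\Gamma_1)$.

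The main obstacle I expect is the simultaneous control over the open set $U \subset \Ycal$ and the bounded open set $\Ucal \subset \OModuli$: I must ensure that super-nice approximating paths genuinely remain in $U$ (so the ambient $\vGamma$ stays compact and open in $\OModuli_\vY$) and that the boundary $\partial\Ucal$ never meets the moduli space along the path. The collar $\Ucal' \supset \overline\Ucal$ supplied before Definition~\ref{defn:contribution}, combined with the Baire-density statement of Theorem~\ref{thm:super-nice-paths}, is the technical device that makes both the compactness of $\vGamma$ and the existence of super-nice approximating paths simultaneously feasible.
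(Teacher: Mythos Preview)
Your proof of the first claim (well-definedness) matches the paper's essentially verbatim: reduce to a single admissible pair $(\Ucal,U)$ by passing to intersections, then connect any two super-nice approximants by a super-nice path inside $U$ and invoke Theorem~\ref{thm:invariance-count-function}.

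For the second claim your argument is correct but takes a different route. You localize in $t$: extract for each $t\in[0,1]$ an admissible pair $(\Ucal_t,U_t)$ and an interval $I_t$, pass to a finite subcover, and chain equalities across overlaps (using the first claim to match the two computations of $n(\Gamma_s)$ at an overlap point). The paper instead works globally in the path space: after reducing to super-nice endpoints, it applies the same ``collar'' argument that precedes Definition~\ref{defn:contribution} directly to the path $\vY\in\Ycal_{Y_0,Y_1}$, obtaining an open neighborhood $U$ of $\vY$ in $\Ycal_{Y_0,Y_1}$ such that $\OModuli_{\vY'}\cap\Ucal$ is isolated for every $\vY'\in U$; a single super-nice perturbation $\vY'\in U\cap\Ycal^{\supernice}_{Y_0,Y_1}$ then finishes the proof in one application of Theorem~\ref{thm:invariance-count-function}. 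Your covering argument trades this single global perturbation for finitely many local ones; it is slightly longer but has the minor advantage of never needing to reduce explicitly to super-nice endpoints or to invoke the isolation argument in the Banach manifold $\Ycal_{Y_0,Y_1}$.
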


\begin{proof}
If $Y_0,Y_1\in \Ycal^{\supernice}\cap U_{\Gamma}$ are a pair of super-nice vector fields, choose a super-nice path $\vY$ in $U_\Gamma$ which connects them. Then $\vGamma=\OModuli_\vY\cap([0,1]\times \Ucal_\Gamma)$ is open and compact. If $\Gamma_i=\Ucal_\Gamma\cap\OModuli_{Y_i}$ for $i=0,1$, Theorem~\ref{thm:invariance-count-function} implies $n(\Gamma_0)=n(\Gamma_1)$. Therefore,  $n(\Gamma)$ is independent from the choice of $Y'\in U_\Gamma$.  But this implies the independence from the choice of $U_{\Gamma}$ and $\Ucal_{\Gamma}$ as well, as we may always pass to the intersection of two different choices for these open sets.\\

By the above argument, for the second claim we may further assume that $Y_0$ and $Y_1$ are super-nice. Since $\vGamma$ is isolated, there are bounded open sets $\Ucal$ and $\Ucal'$ in $[0,1]\times \Xcal$ so that 
\[\overline{\Ucal}\subset\Ucal'\quad\text{and}\quad \vGamma=\OModuli_\vY\cap\Ucal=\OModuli_\vY\cap\Ucal'.\]
As discussed after Definition~\ref{defn:contribution}, this implies that there is a neighborhood $U$ of $\vY$ in $\Ycal_{Y_0,Y_1}$ such that for every other path $\vY'\in U$, $\vGamma'=\OModuli_{\vY'}\cap\Ucal$ is isolated. If $\vY'$ is a super-nice path connecting $Y_0$ to $Y_1$, Theorem~\ref{thm:invariance-count-function} implies that $n(\Gamma_0)=n(\Gamma_1)$, completing the proof.
\end{proof}

For some of the vector fields which are faced in interesting dynamical systems, the families of periodic orbits are not finite and isolated, but have the structure of closed manifolds. Following the argument of \cite[Theorem 5.1]{White-1}, one can prove the following proposition.

\begin{prop}\label{prop:computation}
Suppose that $M$ is a smooth closed manifold of dimension $n$ as before, $Y\in\Ycal$ and $\Gamma$ is a compact and open subset of $\EOrbits^1_Y/S^1$, which has the structure of a closed $k$-dimensional manifold. Moreover, suppose that for every $[\orbit]\in\Gamma$, we have 
\[\det(\hslash \cdot Id-\Fmap_\orbit)=(\hslash-1)^k\qfrak\in\R[\hslash]\] 
where  $\qfrak=\qfrak_\orbit$ does not vanish at any root of unity. If $m_1(\qfrak)$ and $m_2(\qfrak)$ denote the number of real roots of $\qfrak$ in $(-\infty,1)$ and $(-1,1)$, respectively, we have  
\begin{align*}
n(\Gamma)=(-1)^{m_1(\qfrak)}\cdot \chi(\Gamma),\quad n(2\star \Gamma)=\frac{(-1)^{m_2(\qfrak)}-(-1)^{m_1(\qfrak)}}{2}\cdot \chi(\Gamma)\quad\text{and}\quad n(d\star\Gamma)=0\quad\forall\ d>2,
\end{align*}
where $\chi(\Gamma)$ denotes the Euler characteristic of $\Gamma$. 
\end{prop}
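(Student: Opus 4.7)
The plan is to perturb $Y$ to a super-nice vector field $Y' = Y + \epsilon Z$, with $\epsilon > 0$ small, for which $\Gamma$ breaks up into a finite collection of isolated super-rigid periodic orbits indexed by the critical points of a chosen Morse function $f:\Gamma\to\R$. The weights of these orbits can be computed explicitly in terms of $f$ and $\qfrak$, and summing them yields the claimed formulas; Theorem~\ref{thm:invariance} then identifies these sums with $n(\Gamma)$, $n(2\star\Gamma)$, and $n(d\star\Gamma)$ for the original $Y$.

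To construct the perturbation, fix a Riemannian metric on $\Gamma$ so that $\nabla f$ is defined. Since distinct periodic orbits of $Y$ are automatically disjoint by uniqueness of solutions to ODEs, the union $W=\bigcup_{[\orbit]\in\Gamma}\mathrm{image}(\gamma_\orbit)\subset M$ is a $(k+1)$-dimensional submanifold, and after rescaling time within each orbit to a common period, a tubular neighborhood admits coordinates $(g,\theta,v)\in\Gamma\times S^1\times N_{W,(g,\theta)}$. Pick a bump function $\beta:S^1\to\R^{\geq 0}$ with $\int_0^1\beta\,d\theta=1$, and define $Z$ near $W$ so that its $T\Gamma$-component is $-\beta(\theta)\,\nabla f(g)$ while its $S^1$ and $N_W$ components vanish; extend $Z$ by cutoff to all of $M$. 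A Lyapunov--Schmidt reduction applied to the fixed-point equation for the return map of $Y'$ at each $\orbit\in\Gamma$---using that $Q_\orbit-I$ is invertible since $\qfrak(1)\neq 0$---reduces the search for closed $Y'$-orbits near $\Gamma$ to a single equation on $\Gamma$ whose leading-order part is $\epsilon\,\nabla f(g)=0$. For small $\epsilon>0$ its solutions are exactly the critical points of $f$, producing a finite collection $\{\orbit'_p\}_{p\in\mathrm{Crit}(f)}$ of closed $Y'$-orbits that converge to the orbits of $\Gamma$ at $p$ as $\epsilon\to 0$.

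Differentiating the reduction shows that, in the splitting $N_{\orbit'_p}=T_p\Gamma\oplus N^\perp$, the holonomy linearization has the block form
\[
\Fmap_{\orbit'_p}=\big(I_{T_p\Gamma}-\epsilon\,\mathrm{Hess}_p f\big)\oplus Q_p+O(\epsilon^2),
\]
which is super-rigid for small $\epsilon$ because $\mathrm{Hess}_p f$ is non-degenerate and $\qfrak$ has no roots of unity. Using $\sgn\det(Q_p-I)=(-1)^{m_1(\qfrak)}$ (real roots of $\qfrak$ in $(-\infty,1)$ each contribute $-1$, while complex-conjugate pairs $\nu,\overline{\nu}$ contribute $|1-\nu|^2>0$), the analogous identity $\sgn\det(Q_p^2-I)=(-1)^{m_2(\qfrak)}$, and $\sgn\det(\mathrm{Hess}_p f)=(-1)^{\mathrm{ind}(p)}$, one obtains
\[
\epsilon_1(\orbit'_p)=(-1)^{k+\mathrm{ind}(p)+m_1(\qfrak)}, \qquad \epsilon_2(\orbit'_p)=(-1)^{k+\mathrm{ind}(p)+m_2(\qfrak)}.
\]

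Summing over $p\in\mathrm{Crit}(f)$ via $\sum_p(-1)^{\mathrm{ind}(p)}=\chi(\Gamma)$, and noting that $\chi(\Gamma)=0$ whenever $k$ is odd (so the factor $(-1)^k$ is inconsequential), Definition~\ref{defn:sign} yields
\[
n(\Gamma)=(-1)^{m_1(\qfrak)}\chi(\Gamma), \qquad n(2\star\Gamma)=\frac{(-1)^{m_2(\qfrak)}-(-1)^{m_1(\qfrak)}}{2}\chi(\Gamma),
\]
while $n(d\star\Gamma)=0$ for $d>2$ follows directly from the same definition. The main obstacle is the Lyapunov--Schmidt step: rigorously identifying the leading part of the reduced equation with $\epsilon\,df$ and ruling out spurious $Y'$-orbits near $\Gamma$ other than the $\orbit'_p$; this is essentially the content of \cite[Theorem~5.1]{White-1} cited in the statement.
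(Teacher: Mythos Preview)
Your approach is correct and is precisely the argument the paper intends: the paper gives no proof beyond the sentence ``Following the argument of \cite[Theorem 5.1]{White-1}, one can prove the following proposition,'' and what you have written is a faithful unpacking of that reference---perturb $Y$ along a Morse gradient on $\Gamma$, reduce via Lyapunov--Schmidt using the invertibility of $Q_\orbit-I$, identify the surviving orbits with $\mathrm{Crit}(f)$, compute their holonomy in block form, and sum the signs using $\sum_p(-1)^{\mathrm{ind}(p)}=\chi(\Gamma)$. Your sign bookkeeping and the observation that the stray $(-1)^k$ is harmless because $\chi(\Gamma)=0$ for odd $k$ are both correct.
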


\subsection{Examples and computations}\label{subsec:holonomy}
In this subsection, we will consider a few examples and compute the weight function for some compact and open subsets of the moduli space  of ghost and periodic orbits.
\begin{examp}[{\bf{(The mapping cylinder of a diffeomorphism)}}]
 Choose a closed smooth manifold $N$ of dimension  $\dim(N)$ and let $f:N\ra N$ denote a diffeomorphism from $N$ to itself. Let $M=C(f)$ denote the mapping cylinder of $f$. The vector field $\partial_\theta$ on $\R\times N$ induces a vector field $Y_f$ on $M$ without fixed points. Closed $Y_f$ orbits of length $n$ correspond to elements of the set $\Periodics_n(f)$ of periodic points of $f$ of period $n$. Associated with every $p\in\Periodics_n(f)$, the $n$-point orbit 
\[\Ocal(p)=\Ocal_f(p)=\{p,f(p),f^2(p),\ldots,f^{n-1}(p)\}\subset \Periodics_n(f)\]
of $p$ is invariant under the flow and gives a periodic orbit $\orbit_p$ of length $n$, which starts at $(0,p)$. Note that $[\orbit_{f^j(p)}]=[\orbit_p]$ for $j=1,\ldots,n-1$. The holonomy map of $\orbit_p$ is identified with the germ of $f^n$ at $p$. Assume further that for every  $p\in\Periodics_n(f)$  as above, $d_pf:T_pN\ra T_pN$ does not have any eigenvalues which is a root of unity. Then $\epsilon_d(\orbit_p)$ is the sign of $\det(d_pf^{nd}-Id)$, which is the index $\imath(f^{nd},p)$ of the fixed point $p$ of $f^{nd}$. The latter number is independent of the choice of $p$ in its orbit and may be denoted by $\imath(f^{nd},\Ocal(p))$. The set $\Gamma_n(f)$ of periodic orbits of length $n$ is finite and consists of  super-rigid periodic orbits. Let 
\[\pi_f=\sum_{n=1}^\infty n(\Gamma_n(f))\hslash^n\] 
denote the associated formal power series. For odd values of $n$, we have  
\[n(\Gamma_n(f))=\frac{1}{n}\cdot\sum_{p\in\Periodics_n(f)}\imath(f^n,p).\]
If $n$ is even, the orbit of every point $p\in\Periodics_{n}(f)$ contributes $\imath(f^{d},p)$ to $n(\Gamma_n(f))$, while every $p\in\Periodics_{n/2}(f)$ contributes $(\imath(f^{n},p)-\imath(f^{n/2},p))/2$ to $n(\Gamma_n(f))$. It follows that
\[n(\Gamma_n(f))=\frac{1}{n}\sum_{p\in\Periodics_{n}(f)\cup \Periodics_{n/2}(f)}\imath(f^{n},p)-\frac{1}{n}\sum_{p\in\Periodics_{n/2}(f)}\imath(f^{n/2},p).\]
For $p\in\Periodics_n(f)$ note that $\imath(f^{nk},p)$ only depends on the parity of $k$, by our previous observations. Given an integer $n$, let $n=2^r\cdot k$, where $k$ is odd. Correspondingly, we may compute
\begin{align*}
\sum_{d=2^il|n=2^rk}d\cdot n(\Gamma_d(f))&=\sum_{l|k}\sum_{j=0}^{r}\sum_{p\in\Periodics_{2^jl}(f)}\imath(f^{n},p)\quad\quad\Rightarrow\quad\quad \sum_{d|n}d\cdot n(\Gamma_d(f))=L(f^n),
\end{align*}
where $L(f^n)$ denotes the Lefshetz number of $f^n:N\ra N$. By M\"ubius inversion formula,  
\begin{align*}
n(\Gamma_d(f))=\frac{1}{d}\sum_{l|d}\mu\left(\frac{d}{l}\right)L(f^l).
\end{align*}
The map $f:N\ra N$ induces the linear maps on homology $\Fmap_j:\Ht_j(N,\Q)\ra \Ht_j(N,\Q)$, which have eigenvalues $\lambda_j^1,\ldots,\lambda_j^{b_j(N)}$, where $b_j(N)$ denotes the $j$-th Bettin number of $N$. Thus
\begin{equation}\label{eq:pi-orbit}
\pi_f=\sum_{k=0}^{\dim(N)}\sum_{j=1}^{b_k(N)}(-1)^k\Pmap_{\lambda_k^j}(\hslash),\quad\text{where}\quad\Pmap_\lambda(\hslash):=\sum_{d=1}^\infty\sum_{l|d}\mu\left(\frac{d}{l}\right)\frac{\lambda^l\hslash^d}{d}
\end{equation}
\qed
\end{examp}
\begin{examp}[{\bf{(Vector fields on $S^3$)}}]
We think of $S^3$ as the subspace of $\C^2$ defined by 
\[S^3=\big\{(x,y)\in\C^2\ \big|\ |x|^2+|y|^2=1\big\}.\]
The tangent space at $(x,y)$ to the sphere is then given as the subspace of $\C^2=T_{(x,y)}\C^2$ given by the pairs $(z,w)\in\C^2$ with $\mathrm{Re}(\bar x\cdot z+\bar y\cdot w)=0$. In particular, the Hopf vector field is given by $H(x,y)=(ix,iy)$. All orbits of $H$ are periodic of fixed period $2\pi$, and the space of fibers may be identified as $S^2=\mathbb{CP}^1$. Therefore, $\OModuli_H=\amalg_{d\in\Z^+} \Gamma_d$, where each $\Gamma_d$ is a copy of $S^2$ and consists of the degree-$d$ covers of the simple orbits of $H$. The vector field $H$ may be perturbed to the vector field 
\[H_{a,b}(x,y):=(iax,iby),\quad\quad\forall\ \ (x,y)\in S^3\subset\C^2,\]
where $a$ and $b$ are real numbers close to $1$. The orbit of $H_{a,b}$ which starts at $(x,y)\in S^3$ is then given by \[\gamma_{x,y}(\theta)=(xe^{ia\theta},ye^{ib\theta},\quad\quad\forall\ \ \theta\in\R.\]
If we chose $a,b\in\R$ so that they are independent over $\Q$, the orbit determined by $\gamma_{x,y}$ is not periodic unless $x=0$ or $y=0$. Correspondingly, we obtain a pair of closed orbits, denoted by $\orbit_a$ and $\orbit_b$ with periods $2\pi/a$ and $2\pi/b$ (respectively) which pass through $(1,0)$ and $(0,1)$ (respectively) at time $\theta=0$. The above computation of all orbits of $H_{a,b}$ may then be used to compute the linearization of the holonomy maps associated with $\orbit_a$ and $\orbit_b$ to be
\[\Fmap_{\orbit_a},\Fmap_{\orbit_b}:\C\ra\C,\quad\quad
\Fmap_{\orbit_a}(x)=e^{2\pi i\frac{b}{a}}\quad\text{and}\quad
\Fmap_{\orbit_b}(x)=e^{2\pi i\frac{a}{b}},\quad\forall\ x\in\C.\]
Therefore, both periodic orbits are definite and $\epsilon_d(\orbit_a)=\epsilon_d(\orbit_b)=1$ for all $d\in\Z^+$.
This computation quickly implies that $n(\Gamma_1)=2$ and $n(\Gamma_d)=0$ for $d>1$.\\
Alternatively, one may use Proposition~\ref{prop:computation} for a quick computation of $n(\Gamma_d)=n(d\star\Gamma_1)$, which matches with the above direct computation. 
\qed
\end{examp}
\begin{examp}[{\bf{(Volume preserving vector fields on $3$-manifolds)}}]
Let $M$ be a smooth closed $3$-manifold, which is equipped with a volume form $\Omega$ and $\Ycal^{\mathrm{v}}\subset\Ycal$ denote the subset of volume preserving vector fields. One can show that the intersection $\Ycal^{\mathrm{v},\supernice}=\Ycal^{\mathrm{v}}\cap\Ycal^\supernice$ is a Bair subset of $\Ycal^{\mathrm{v}}$. Given $Y\in \Ycal^{\mathrm{v}}$, and $\corbit=(s,Y,x,P)\in\COrbits_\vY$, note that $\trace(d_xY)=0$. This follows since $Y$ is volume preserving. Moreover, since $\tfrak(\corbit)<0$, it follows that $d_xY$ has three eigenvalues $\lambda,\bar\lambda$ and $\mu$, with $\lambda+\bar\lambda<0$ and $\mu\in \R^-$. In particular, $\sgn(\det(d_xY))=-1$. This implies that the total contribution of ghost periodic orbits to the weight function is non-negative for every volume preserving vector field $Y\in \Ycal^{\mathrm{v}}$ and every compact and open subset $\Gamma$ of $\OModuli_Y$.
\qed
\end{examp}
\begin{examp}[{\bf{(The geodesic flow)}}]
Let $M$ denote the unit tangent bundle of a smooth closed manifold $N$ of dimension $n$ and $\Ycal^{\mathrm{r}}\subset \Ycal$  consist of the vector fields generating the geodesic flows associated with the $C^2$ Riemannian metrics on $N$. The counting problem for closed geodesics was addressed in the earlier work \cite{Ef-p} of the author. One can check that
$\Ycal^{\mathrm{r},\supernice}=\Ycal^{\mathrm{r}}\cap\Ycal^{\supernice}\subset \Ycal^{\mathrm{r}}$
is a Bair subset and correspond to the Riemannian metrics which are super nice or {\emph{bumpy}} (c.f. \cite{Ef-p,Abraham,White-1}). For every $Y\in \Ycal^{\mathrm{r}}$, $\oCOrbits_Y$ is empty, since $Y$ admits no zeros. If $g$ is a negatively curved Riemannian metric, the closed orbits, which are the periodic orbits of $Y_g$ form a discrete subset of $\OModuli$. The linearization $\Fmap_\orbit$ of the holonomy map associated with every periodic orbit $\orbit$ is a hyperbolic map $\Fmap:\R^{2(n-1)}\ra\R^{2(n-1)}$ which has $n-1$ eigenvalues in $(0,1)$ and $n-1$ eigenvalues in $(1,\infty)$. It follows that all such periodic orbits are definite, and $\epsilon_1(\orbit)=\epsilon_2(\orbit)=(-1)^{n-1}$. In particular, our convention for the normalization of the weight function in this case differs from the convention set in \cite{Ef-p} by a factor of $(-1)^{n-1}$. One may check that this is the case for all compact and open subset of the space of closed geodesics (for arbitrary Riemannian metrics on $N$).  
\qed
\end{examp}

\end{document}